\numberwithin{equation}{section}
\numberwithin{equation}{section}
\theoremstyle{plain}
\newtheorem{thm}[equation]{Theorem}
\newtheorem{cor}[equation]{Corollary}
\newtheorem{lem}[equation]{Lemma}
\newtheorem{prop}[equation]{Proposition}
\theoremstyle{definition}
\newtheorem{rem}[equation]{Remark}
\theoremstyle{remark}
\newtheorem{nonsec}[equation]{}
\numberwithin{equation}{section}
\newcommand{\beq}{\begin{equation}}
\newcommand{\eeq}{\end{equation}}
\newcommand{\ben}{\begin{enumerate}}
\newcommand{\een}{\end{enumerate}}
\newcommand{\bequu}{\begin{eqnarray*}}
\newcommand{\eequu}{\end{eqnarray*}}
\newcommand{\bequ}{\begin{eqnarray}}
\newcommand{\eequ}{\end{eqnarray}}
\theoremstyle{remark}
\newcommand{\sh}{\,\textnormal{sh}}
\renewcommand{\th}{\,\textnormal{th}}
\renewcommand{\Im}{{ \rm Im}\,}
\font\fFt=eusm10 %scaled 1200
\font\fFa=eusm7  %scaled 1200
\font\fFp=eusm5  %scaled 1200
\def\K{\mathchoice
 %%%displaystyle
{\hbox{\,\fFt K}}
%%%%textstyle
{\hbox{\,\fFt K}}
%%%scriptstyle
{\hbox{\,\fFa K}}
%%%%scriptscriptstyle
{\hbox{\,\fFp K}}}
\newcounter{alphabet}
\newcounter{minutes}\setcounter{minutes}{\time}
\newcounter{hours}\setcounter{hours}{\time}
\begin{document}
\bibliographystyle{amsplain}
\title{Visual angle metric in the upper half plane}

\def\thefootnote{}
\footnotetext{
\texttt{\tiny File:~\jobname .tex,
          printed: \number\year-\number\month-\number\day,
          \thehours.\ifnum\theminutes<10{0}\fi\theminutes}
}
\makeatletter\def\thefootnote{\@arabic\c@footnote}\makeatother

\author[M. Fujimura]{Masayo Fujimura}
\author[O. Rainio]{Oona Rainio}
\author[M. Vuorinen]{Matti Vuorinen}

\date{}

%%%%%%%%%%%%%%%%%%%%%%%%%%%%%%%%%%%%%

\keywords{M\"obius transformations, hyperbolic metric, visual angle metric, quasiconformal mappings}
\subjclass[2020]{Primary 51M09; Secondary 30C62}
\begin{abstract}
We prove an identity which connects the visual angle metric $v_{\mathbb{H}^2}$ and the hyperbolic metric $\rho_{\mathbb{H}^2}$ of the upper half plane $\mathbb{H}^2$. The proof is based on geometric arguments and uses computer algebra methods for formula manipulation. We also prove a sharp Hölder continuity result for quasiregular mappings with respect to the visual angle metric.
\end{abstract}
\maketitle

\noindent \textbf{Author information.}\\
Masayo Fujimura$^1$, email: \texttt{masayo@nda.ac.jp}, ORCID: 0000-0002-5837-8167\\
Oona Rainio$^2$, email: \texttt{ormrai@utu.fi}, ORCID: 0000-0002-7775-7656\\
Matti Vuorinen$^2$, email: \texttt{vuorinen@utu.fi}, ORCID: 0000-0002-1734-8228\\
1: Department of Mathematics, National Defense Academy of Japan, Yokosuka, Japan\\
2: Department of Mathematics and Statistics, University of Turku, FI-20014 Turku, Finland\\
\textbf{Funding.} The second author was supported by Finnish Culture Foundation.\\
\textbf{Data availability statement.} Not applicable, no new data was generated.\\
\textbf{Conflict of interest statement.} There is no conflict of interest.\\
%\textbf{Acknowledgements.} We are grateful to the referees for their careful work.

%%%%%%%%%%%%%%%%%%%%%%%%%%%%%%%%%%%%%

% Text of article.

%%%%%%%%%%%%%%%%%%%%%%%%%%%%%%%%%%%%%
%%%%%%%%%%%%%%%%%%%%%%%%%%%%%%%%%%%%%
%%%%%%%%%%%%%%%%%%%%%%%%%%%%%%%%%%%%%
  
\section{Introduction}
%%%%%%%%%%%%%%%%%%%%%%%%%%%
%%%%%%%%%%%%%%%%%%%%%%%%%%%
%%%%%%%%%%%%%%%%%%%%%%%%%%%
  
During the past few decades, many authors have applied metrics as a 
tool of geometric
function theory research \cite{gh,ha,h, hkv,p}. This research is 
motivated by the need to find such metrics in general planar domains that have a more explicit expression than the hyperbolic metric. These metrics typically are not M\"obius invariant, but they are often quasi-invariant and within a constant factor from the hyperbolic metric.
Here, we study one of these metrics, the visual angle metric, 
introduced in \cite{klvw}
and further investigated in \cite{hvw,wv,fkv}.

For a given domain
$G \subset\mathbb{R}^2$ and $a,b \in G,$ the visual angle metric 
maximizes the visual
angle $\measuredangle(a,z,b)$ over all boundary points $z \in \partial G$ and 
it is therefore defined as follows:
\begin{equation*}
  v_G(a,b) = \sup \{ \measuredangle (a,z,b) : z\in \partial{G}\}\, .
\end{equation*}
To avoid trivial cases, we require that $G$ is a proper subdomain of $\mathbb{R}^2$ such that $\partial G$
is not a proper subset of a line. In spite of this very concrete definition, the visual angle metric does 
not have
an explicit formula but several comparison inequalities between the 
visual angle metric
and other metrics are known \cite{klvw,hvw,wv}.

We continue the work of \cite{fkv}, where an explicit formula for the 
visual angle metric involving the hyperbolic
metric was found in the case when the domain is the unit disk $\mathbb{B}^2.$
It seems natural to expect that finding a similar formula for the upper 
half plane $\mathbb{H}^2$ should be possible. The fundamental 
difficulty here is that the hyperbolic
metric is invariant under  M\"obius automorphisms of $\mathbb{H}^2$
while  the visual angle metric is not. Moreover, we were  not
able to extend the proof of  \cite{fkv} to the upper half plane case. 
Our first main result is the following theorem where we give a formula
for the visual angle metric $v_{\mathbb{H}^2}$ of the upper half plane,
in terms of the hyperbolic metric $\rho_{\mathbb{H}^2}.$ 
Due to the invariance of both $\rho_{\mathbb{H}^2}$ and $v_{\mathbb{H}^2}$
under similarity transformations, we give the result for normalized point
pairs  $ a,b\in S^1(0,1)\cap\mathbb{H}^2 .$
%In the case of the unit disk ${\mathbb{B}^2}$ a similar formula for
% $v_{\mathbb{B}^2}$ is given in \cite{fkv}.
%

\begin{thm} \label{my1} For $ a,b\in S^1(0,1)\cap\mathbb{H}^2 $, let $u=(1+ab)/(a+b),\,$
\[
T\equiv \frac{(1+\sqrt{1-|u|^2})\, t \sqrt{1-t^2}}{\sqrt{1- |u|^2 t^2}} 
,\quad
t= \frac{|a-b|}{|a-\overline{b}|}= \frac{|a-b|}{|ab-1|}\, \,. \]
Then
\[
\sin(v_{\mathbb{H}^2}(a,b))= T  =\frac{|a-b|}{|ab-1|}\left(
    \frac12|a+b|+\sqrt{{\rm Im}(a)\cdot {\rm Im}(b)}\right).
\]
% {\rm th} \frac{\rho_{ \mathbb{H}^2}(a,b)}{2} =

\end{thm}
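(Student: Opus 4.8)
The goal is a formula for $\sin(v_{\mathbb{H}^2}(a,b))$ where $v_{\mathbb{H}^2}(a,b) = \sup\{\measuredangle(a,z,b): z \in \partial\mathbb{H}^2\}$, and $\partial\mathbb{H}^2 = \mathbb{R}$ (the real axis). So we're maximizing the angle $\measuredangle(a,z,b)$ as $z$ ranges over the real line.

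Let me think about the geometry. We have two points $a, b$ in the upper half plane, both on the unit circle $S^1(0,1)$. We want to find the point $z$ on the real axis that maximizes the viewing angle of the segment $[a,b]$.

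**The classical angle-maximization approach.**

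The key geometric fact: for a point $z$ on a line, the angle $\measuredangle(a,z,b)$ subtended by a segment $[a,b]$ is maximized when the circle through $a, b$ is tangent to the line at $z$. This is the classical "Regiomontanus" / "ladder" angle problem.

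So the maximizing $z_0$ on the real axis is the tangent point of the circle through $a$ and $b$ that is tangent to $\mathbb{R}$.

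**My proof plan.**

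Let me sketch how I'd prove this.

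First, I'd set up coordinates. Write $a = \alpha_1 + i\alpha_2$, $b = \beta_1 + i\beta_2$ with $\alpha_2, \beta_2 > 0$. Since $a, b \in S^1(0,1)$, we have $|a| = |b| = 1$.

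**Step 1: Find the optimal $z_0$.** The angle $\measuredangle(a,z,b)$ for $z$ real is maximized when the circle through $a,b,z$ is tangent to the real line. By the power of a point, if a circle through $a,b$ meets the real line, the tangent length from a point on the line... Actually, the cleaner approach: the tangent point $z_0$ satisfies that the circle through $a,b$ tangent to $\mathbb{R}$ at $z_0$ has $z_0$ as tangent point, and the tangent length squared equals the power. The power of the origin region... Let me use: for $z$ on the real axis, by power of a point, $|z - a||z - b| \cdot (\text{something})$.

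Actually the cleanest characterization: $z_0^2 - (\text{coefficient})z_0 + (\text{const}) = 0$. The tangent condition gives $(z_0 - \alpha_1)(z_0 - \beta_1) + \alpha_2\beta_2 = 0$ from requiring the circle through $a,b$ to be tangent to the real axis. Solving, $z_0 = \frac{\alpha_1+\beta_1 \pm \sqrt{(\alpha_1-\beta_1)^2 - 4\alpha_2\beta_2 + \dots}}{2}$; I'd pick the correct root. Equivalently, the tangent length is $\ell$ where $\ell^2 = \text{Re}(a\bar b) $-type expression, and in fact the tangent-point geometry gives the "geometric mean" structure $\sqrt{\alpha_2\beta_2}$ visible in the target formula.

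**Step 2: Compute $\sin$ of the maximal angle.** With $z_0$ the tangent point, I use the formula
\[
\sin(\measuredangle(a,z_0,b)) = \frac{|a-b|}{2R},
\]
where $R$ is the radius of the circle through $a,b,z_0$ (law of sines / inscribed angle theorem: the chord $|a-b|$ subtends the angle, and $|a-b| = 2R\sin\theta$). Since the circle is tangent to $\mathbb{R}$ at $z_0$, its center is at height $R$ above $z_0$, so $R = \frac{|a - z_0|^2 \text{ or }}{}$... I can compute $R$ from the tangency: center $= (z_0, R)$, and $|a - \text{center}|^2 = R^2$ gives $R$ in terms of $a$ and $z_0$.

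**Step 3: Algebraic simplification.** Substitute $z_0$ from Step 1 into the expression for $R$, then into $\sin\theta = |a-b|/(2R)$, and simplify using $|a|=|b|=1$. The target claims this equals $\frac{|a-b|}{|ab-1|}(\frac12|a+b| + \sqrt{\alpha_2\beta_2})$. Note $|a-\bar b| = |ab-1|$ when $|b|=1$ (since $|a - \bar b| = |b||\,a/\bar b \cdot \bar b - \bar b\,|$... more directly $|a - \bar b| = |\,\overline{b}\,||ab^{-1}... |$; using $\bar b = 1/b$, $|a - \bar b| = |a - 1/b| = |ab - 1|/|b| = |ab-1|$). This identity, together with $t = |a-b|/|ab-1|$, connects to the hyperbolic metric, since $\rho_{\mathbb{H}^2}$ and $\th(\rho/2)$ are expressed exactly through $t = |a-b|/|a-\bar b|$.

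**The main obstacle.** The genuine difficulty is Step 3: the raw trigonometric/coordinate expression for $\sin\theta$ at the tangent point is an unwieldy surd, and collapsing it to the clean closed form $\frac12|a+b| + \sqrt{\alpha_2\beta_2}$ requires heavy algebraic manipulation exploiting $|a|=|b|=1$. This is precisely where I expect to lean on computer algebra (as the abstract announces). The conceptual steps (tangent circle maximizes the angle; inscribed-angle/law-of-sines for $\sin\theta$) are standard; verifying that the messy expression simplifies to the stated symmetric form — and matching it against the variable $u = (1+ab)/(a+b)$ and $T$ — is the computational heart of the argument.
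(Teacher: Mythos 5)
Your overall strategy is geometrically sound and its first half coincides with the paper's: both reduce $v_{\mathbb{H}^2}(a,b)$ to the angle $\measuredangle(a,d,b)$ at the point $d$ where the smaller of the two circles through $a,b$ tangent to $\mathbb{R}$ touches the real axis (Lemma \ref{dForm}, Figure \ref{fig3}). Your second half — the law of sines $\sin\theta=|a-b|/(2R)$ with $R$ the radius of that tangent circle — is a legitimate alternative to the paper's route, which instead maps $a,b$ along the lines through $d$ to a symmetric pair $ya=t-i\sqrt{1-t^2}$, $yb=-t-i\sqrt{1-t^2}$ on the lower unit circle (Lemma \ref{4steps3}) and extracts $\sin\theta$ from the area of the triangle $\triangle(ya,d,yb)$ (Lemma \ref{4steps4}); on a symmetric test case ($a=e^{i\pi/3}$, $b=e^{2i\pi/3}$) your route does reproduce the stated value $\sqrt{3}/2$, so the plan is in principle completable.

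There are, however, two concrete problems. First, the tangency equation you actually write down, $(z_0-\alpha_1)(z_0-\beta_1)+\alpha_2\beta_2=0$, is wrong: for $a=e^{i\pi/3}$, $b=e^{2i\pi/3}$ it becomes $z_0^2+1/2=0$ and has no real root, whereas the tangent point is $z_0=0$. The correct condition, obtained from $|a-z_0|^2=2R\,{\rm Im}(a)$ and $|b-z_0|^2=2R\,{\rm Im}(b)$ for a center $z_0+iR$, is ${\rm Im}(b)\,|a-z_0|^2={\rm Im}(a)\,|b-z_0|^2$, i.e.\ the quadratic \eqref{eq:discrD} solved in Lemma \ref{dForm}; everything downstream of Step 1 depends on having the right $z_0$. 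Second, the theorem asserts two equalities, $\sin(v_{\mathbb{H}^2}(a,b))=T$ and $T=\frac{|a-b|}{|ab-1|}\bigl(\frac12|a+b|+\sqrt{{\rm Im}(a)\,{\rm Im}(b)}\bigr)$, and your sketch establishes neither: the emergence of $u=(1+ab)/(a+b)$ and $t=\th(\rho_{\mathbb{H}^2}(a,b)/2)$ inside $T$ — which is the whole point, since it ties $v_{\mathbb{H}^2}$ to the hyperbolic metric — and the reduction of $T$ to the closed form are exactly what you defer to ``heavy algebraic manipulation.'' In the paper this is carried by the identification $u=LIS[a,\overline{b},b,\overline{a}]$, the hyperbolic-midpoint identity $d=u/(1+\sqrt{1-|u|^2})$ (Remark \ref{dishm}), and Proposition \ref{sth2}. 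As written, the proposal is a plan whose one explicit formula is incorrect and whose decisive computation is missing.
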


The proof is based on the method of the earlier papers \cite{fkv, frv},
and it utilizes computer algebra algorithms combined with manual processing. 
In particular, we give  explicit formulas for the points of 
intersection of circles and lines in Euclidean and hyperbolic geometries, needed for the proofs. Some of challenges of symbolic computation are briefly discussed in \ref{challenges}. The formulas are also simple enough
to establish the connection between the visual angle metric and the 
hyperbolic metric (see Figures \ref{fig3} and \ref{fig4}). To this end, 
we prove a number of geometric lemmas on which
the main steps of the proof of Theorem \ref{my1} are based. There are four steps that
are described in the beginning of Section 5 and in Figure \ref{fig7}.

Our second main result is the following sharp H\"older continuity result for the visual angle metric.

\begin{thm}\label{thm_holder}
If $f:\mathbb{H}^2\to\mathbb{H}^2= f(\mathbb{H}^2)$ is a non-constant $K$-quasiregular mapping, then for all $a,b\in\mathbb{H}^2$ with $v_{\mathbb{B}^2}(a,b) < \pi/2$, 
\begin{align*}
\tan\left(\frac{v_{\mathbb{H}^2}(f(a),f(b))}{2}\right)\leq\lambda(K)^{1/2}\max\left\{\left(\tan(v_{\mathbb{H}^2}(a,b))\right)^K,\left(\tan(v_{\mathbb{H}^2}(a,b))\right)^{1/K}\right\}.    
\end{align*}
\end{thm}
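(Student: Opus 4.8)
The plan is to sandwich the visual angle metric between two explicit functions of the hyperbolic metric and then quote the Schwarz lemma for $K$-quasiregular self-maps of $\mathbb{H}^2$ in its hyperbolic form. Throughout I read the hypothesis as $v_{\mathbb{H}^2}(a,b)<\pi/2$, which keeps every tangent below finite.

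The first and decisive step is to extract from Theorem \ref{my1} the two-sided comparison
\[
\tan\!\left(\frac{v_{\mathbb{H}^2}(x,y)}{2}\right)\ \le\ \sh\!\left(\frac{\rho_{\mathbb{H}^2}(x,y)}{2}\right)\ \le\ \tan\bigl(v_{\mathbb{H}^2}(x,y)\bigr),\qquad v_{\mathbb{H}^2}(x,y)<\tfrac{\pi}{2}.
\]
The key observation is that the quantity $t=|a-b|/|a-\overline b|$ of Theorem \ref{my1} is exactly $\th(\rho_{\mathbb{H}^2}(a,b)/2)$, and that for a normalized pair the auxiliary parameter satisfies $|u|\in[0,1]$. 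I would regard $\sin v_{\mathbb{H}^2}=T$ as a function of $s=|u|^2\in[0,1]$ with $t$ held fixed, show it is monotone in $s$ (so that $s$ extremal means $v_{\mathbb{H}^2}$ extremal), and evaluate the two endpoints: at $s=0$ one computes $\tan(v_{\mathbb{H}^2}/2)=\sh(\rho_{\mathbb{H}^2}/2)$ (the maximal visual angle for fixed hyperbolic distance), and at $s=1$ one computes $\tan(v_{\mathbb{H}^2})=\sh(\rho_{\mathbb{H}^2}/2)$ (the minimal one). Since $v_{\mathbb{H}^2}$ and $\rho_{\mathbb{H}^2}$ are similarity invariant, every pair may be normalized to $S^1(0,1)\cap\mathbb{H}^2$, so these endpoint identities upgrade to the displayed inequalities for all pairs.

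Next I would use the Schwarz lemma for $K$-quasiregular maps in the form $\th(\rho_{\mathbb{H}^2}(f(a),f(b))/2)\le\varphi_K(\th(\rho_{\mathbb{H}^2}(a,b)/2))$, where $\varphi_K$ is the Hersch--Pfluger distortion function. Combining this with the complementary identity $\varphi_K(r)^2+\varphi_{1/K}(\sqrt{1-r^2})^2=1$ and the special-function estimate $\varphi_K(r)/\varphi_{1/K}(\sqrt{1-r^2})\le\lambda(K)^{1/2}\max\{(r/\sqrt{1-r^2})^{K},(r/\sqrt{1-r^2})^{1/K}\}$ (which becomes an equality at $r=1/\sqrt2$, where $\lambda(K)$ is defined), and writing $\sh(\rho/2)=\th(\rho/2)/\sqrt{1-\th^2(\rho/2)}$, one obtains the hyperbolic Schwarz lemma in $\sh$-form,
\[
\sh\!\left(\frac{\rho_{\mathbb{H}^2}(f(a),f(b))}{2}\right)\le\lambda(K)^{1/2}\max\Bigl\{\bigl(\sh\tfrac{\rho_{\mathbb{H}^2}(a,b)}{2}\bigr)^{K},\bigl(\sh\tfrac{\rho_{\mathbb{H}^2}(a,b)}{2}\bigr)^{1/K}\Bigr\}.
\]
This is exactly where the constant $\lambda(K)^{1/2}$ and the maximum over the exponents $K$ and $1/K$ are produced.

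Finally I would chain the three ingredients: apply the left half of the comparison to the image pair $f(a),f(b)$, then the $\sh$-form Schwarz lemma, and then the right half of the comparison $\sh(\rho_{\mathbb{H}^2}(a,b)/2)\le\tan(v_{\mathbb{H}^2}(a,b))$ to the domain pair, which replaces each occurrence of $\sh(\rho_{\mathbb{H}^2}(a,b)/2)$ by $\tan(v_{\mathbb{H}^2}(a,b))$ and yields the asserted inequality. I expect the main obstacle to be the first step: establishing the monotonicity of $T$ in $s=|u|^2$ and confirming that $s=0$ and $s=1$ are genuinely the extremal configurations for a fixed hyperbolic distance, since this monotonicity is what makes the comparison—and hence the whole theorem—sharp. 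The passage from the $\th$-form to the $\sh$-form Schwarz lemma is a secondary, purely function-theoretic difficulty.
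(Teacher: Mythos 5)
Your steps (2) and (3) are exactly the paper's proof: the paper chains $\tan(v_{\mathbb{H}^2}(f(a),f(b))/2)\le \sh(\rho_{\mathbb{H}^2}(f(a),f(b))/2)$, the Schwarz lemma in $\th$-form, the bound of Lemma \ref{rv68} (which is precisely your combination of the complementary identity for $\varphi_{K,2}$ with \cite[10.24]{avv}, packaged via $\eta_{K,2}$), and finally $\sh(\rho_{\mathbb{H}^2}(a,b)/2)\le\tan(v_{\mathbb{H}^2}(a,b))$. Your reading of the hypothesis as $v_{\mathbb{H}^2}(a,b)<\pi/2$ is also the intended one, since that is what makes the last step legitimate.

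The gap is in your step (1). The two-sided comparison you need is equivalent to \eqref{klvw319}, i.e.\ Corollary \ref{genbds}, which the paper obtains from Lemma \ref{genpairs} (the horizontal and vertical extremal configurations), not from Theorem \ref{my1}. Your proposed derivation from Theorem \ref{my1} --- treat $T=\sin v_{\mathbb{H}^2}$ as a function of $s=|u|^2$ with $t$ fixed, prove monotonicity, evaluate at $s=0,1$ --- does not work as stated, for two reasons. First, $T(s)=(1+\sqrt{1-s})\,t\sqrt{1-t^2}/\sqrt{1-st^2}$ is not monotone in $s$ uniformly in $t$: for small $t$ it decreases on $[0,1]$ (from $2t\sqrt{1-t^2}$ down to $t$), while for $t$ near $1$ it increases (from a value near $0$ up to $t$), so the direction of monotonicity flips. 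Second, and more fundamentally, extremality of $\sin v_{\mathbb{H}^2}$ does not give extremality of $v_{\mathbb{H}^2}$, because $v_{\mathbb{H}^2}$ can exceed $\pi/2$; by Theorem \ref{my4step} one has $v_{\mathbb{H}^2}=\arcsin T$ or $\pi-\arcsin T$ depending on the sign of $1-(1+\sqrt{1-|u|^2})t^2$, and your argument would have to track this case split. Your endpoint evaluations are correct ($s=0$ gives $\tan(v/2)=\sh(\rho/2)$, $s=1$ gives $\tan v=\sh(\rho/2)$), but the claim that these are the extremes needs the geometric argument of Lemma \ref{genpairs} or a citation to \cite[Thm 3.19]{klvw}. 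Since Corollary \ref{genbds} is already established earlier in the paper, the repair is simply to invoke it instead of re-deriving it from Theorem \ref{my1}.
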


The structure of this article is as follows. First, we present preliminary results related to Euclidean and hyperbolic geometries. In Section 3, we find necessary formulas for intersection points related to the distance $v_{\mathbb{H}^2}$. These formulas, proved with computer algebra methods and manual postprocessing, are given in Tables \ref{t1} and \ref{t2}. The formulas are simple and explicit and may be of independent interest. In Section 4, we prove that certain points related to our main result are collinear. The first main result, Theorem \ref{my1}, is proven in Section 5. In Section 6, we present our second main result, Theorem \ref{thm_holder}, about the H\"older continuity of $K$-quasiregular mappings with respect to the visual angle metric $v_{\mathbb{H}^2}$. This result is sharp when $K=1$.

%
%Moreover, we give simple comparison between the hyperbolic metric and the visual
%angle metric and study the behavior of the visual angle metric under
%M\"obius transformations (???hopefully???).

% In particular, we give a formula
%for the visual angle metric $v_{\mathbb{H}^2}$ of the upper half plane ${\mathbb{H}^2},$
%in terms of the hyperbolic metric  as we did in \cite{fkv} for the case of the unit disk ${\mathbb{B}^2}$.
%%$\rho_{\mathbb{H}^2}$

%%%%%%%%%%%%%%%%%%%%%%%%%%%
%%%%%%%%%%%%%%%%%%%%%%%%%%%
%%%%%%%%%%%%%%%%%%%%%%%%%%%
  
\section{Preliminary results}
%%%%%%%%%%%%%%%%%%%%%%%%%%%
%%%%%%%%%%%%%%%%%%%%%%%%%%%
%%%%%%%%%%%%%%%%%%%%%%%%%%%

Let $L[a,b]$ stand for the line through $a$ and $b\,(\neq a)$.
For distinct points $a,b,c,d \in {\mathbb{C}}$ such that the lines
$L[a,b]$ and  $L[c,d]$ intersect at a unique
point $w$, let
$$
w=LIS[a,b,c,d] = L[a,b]\cap L[c,d] \,.
$$
This point is given by (see e.g. \cite[Ex. 4.3(1), p. 57 and p. 373]{hkv})
\begin{equation}{\label{LIS}}
w=LIS[a,b,c,d] =\frac{(\overline{a}b -a  \overline{b})(c-d)-(a-b) ( \overline{c}d -c  \overline{d})}{( \overline{a}- \overline{b})(c-d)-(a-b) (\overline{c} - \overline{d})}\,.
\end{equation}
%This elementary formula can be found, for instance,
%in Exercise 4.3(1) on p.\,57 and the solution on p.\,373 of \cite{hkv}.

Let $C[a,b,c]$  be the  circle  through distinct noncollinear points $a$, $b$,
and $c$. The formula \eqref{LIS} gives easily a formula for the center
$m(a,b,c)$ of $C[a,b,c]\,.$ For instance, we can find two points on
the bisecting normal to the side $[a,b]$ and another
two points on the bisecting normal to the side $[a,c]$ and then apply
\eqref{LIS}  to get $m(a,b,c)\,.$
In this way we see that the center $m(a,b,c)$ of  $C[a,b,c]$  is
\begin{equation}\label{mfun}
\begin{aligned}
m(a,b,c)&=LIS[\frac{a+b}{2}, \frac{a+b}{2}+(a-b)i,\frac{a+c}{2},\frac{a+c}{2}+(a-c)i]\\
&=\frac{ |a|^2(b-c) +  |b|^2(c-a) +  |c|^2(a-b) }
{a(\overline{c}-\overline{b}) +b(\overline{a}-\overline{c})+ 
c(\overline{b}-\overline{a})}\,.    
\end{aligned}
\end{equation}

We denote the Euclidean ball with a center $x\in\mathbb{R}^n$ and a radius $r>0$ by $B^n(x,r)=\{y\in\mathbb{R}^n\text{ : }|x-y|<r\}$ and the corresponding boundary sphere by $S^{n-1}(x,r)=\{y\in\mathbb{R}^n\text{ : }|x-y|=r\}$. We use the simplified notation $\mathbb{B}^n$ for the unit ball $B^n(0,1)$ and $S^{n-1}$ for the unit sphere. The upper half space $\mathbb{H}^n$ is the domain $\{x=(x_1,...,x_n)\in\mathbb{R}^n\text{ : }x_n>0\}$.

The chordal metric is defined
for $a,b \in \overline{\mathbb{R}^2} =\mathbb{R}^2 \cup \{\infty\} $ as

%===========================================================================================
\begin{equation}\label{chordal}
  \begin{cases}
     {\displaystyle
			q(a,b)=\frac{|a-b|}{\sqrt{1+|a|^2}\;\sqrt{1+|b|^2}}}\;;\,\;a\ne\infty\ne b\;,&\\
			{\displaystyle q(a,\infty)=\frac{1}{\sqrt{1+|a|^2}}}\;\,.&
  \end{cases}
\end{equation}
The \emph{absolute ratio}, also known as the \emph{cross-ratio}, for any four distinct points $a,b,c,d\in\overline{\mathbb{R}}^n$ can now be defined as \cite[(3.10), p. 33]{hkv}, \cite{b}
\begin{align}
|a,b,c,d|=\frac{q(a,c)q(b,d)}{q(a,b)q(c,d)}.
\end{align}
The chordal distances above can be replaced with the Euclidean distances if $\infty\notin\{a,b,c,d\}$.

\begin{nonsec}{\bf Hyperbolic geometry.}\label{hg}
We recall some basic formulas and notation for hyperbolic geometry from \cite{b}.

\end{nonsec}

The hyperbolic metrics of the unit disk ${\mathbb{B}^2}$ and
the upper half plane $\mathbb{H}^2$ are defined, resp., by
\begin{equation}\label{rhoB}
\sh \frac{\rho_{\mathbb{B}^2}(a,b)}{2}=
\frac{|a-b|}{\sqrt{(1-|a|^2)(1-|b|^2)}} ,\quad a,b\in \mathbb{B}^2\,,
\end{equation}
and
\begin{equation}\label{rhoH}
{\rm th}\frac{\rho_{\mathbb{H}^2}(a,b)}{2}=
\frac{|a-b|}{|a-\overline{b}|} ,\quad a,b\in \mathbb{H}^2.
\end{equation}
Here  ${\rm sh}$ and ${\rm th}$ stand for the hyperbolic sine and tangent,  
and their inverse functions are ${\rm arsh}$ and ${\rm arth},$ respectively.
These two metrics are M\"obius invariant and also connected by the identity
\begin{equation}\label{rhoHB}
\rho_{\mathbb{H}^2}(a,b) = 2 \rho_{\mathbb{B}^2}({\rm Re}( a),{\rm Re}(b)), \quad
a,b \in S^1(0,1) \cap \mathbb{H}^2.
\end{equation}
Moreover, for $a,b \in S^1(0,1) \cap \mathbb{H}^2$, the point $u=LIS[a, \overline{b},b, \overline{a}]$
satisfies (cf. \cite[Ex. 3.21(2)]{hkv}, \cite[Fig. 10]{vw})
\begin{equation} \label{hmid}
\rho_{\mathbb{B}^2}({\rm Re}( a),u)=\rho_{\mathbb{B}^2}({\rm Re}( b),u),
\end{equation}
i.e. $u$ is the hyperbolic midpoint of ${\rm Re}( a)$ and ${\rm Re}( b).$ In particular,
for $r\in(0,1)$
\begin{equation} \label{hmid2}
2\rho_{\mathbb{B}^2}(0,d)=\rho_{\mathbb{B}^2}(0,r)\,, 
\quad d= \frac{r}{1+\sqrt{1-r^2}}\,.
\end{equation}

%{\bf We need the def. of hyperbolic midpoint.}

% {\tt filename: vh1127-eq.tex}}
\begin{nonsec}{\bf Geodesics of the hyperbolic metric.  %{\tt filename: vh1127-eq.tex}
}\label{epFtex}  
\label{geod1}
For $ a,b\in\mathbb{H}^2 $, the geodesic line of the hyperbolic metric
 through the points is the upper semicircle through the points and with the 
 center at the point $LIS[0,1,(a+b)/2, i (a-b)]$ of the real axis (see \cite{b}.)
%By symmetry the corresponding lower semicircle goes through the point $.$
We now find the points of intersection of the real axis and the full circle 
$ C $ passing through
$ a,b,\overline{a} .$  

\bigskip
Assuming $  {\rm Im}\, a\neq 0 $,
the center of $ C $ is given by \eqref{mfun} as
\begin{align*}
 m(a,b,\overline{a}) 
   &= \frac{|a|^2(b-\overline{a})+|b|^2(\overline{a}-a)+|\overline{a}|^2(a-b)}
           {a(a-\overline{b})+b(\overline{a}-a)
               +\overline{a}(\overline{b}-\overline{a})}\\
   &=\frac{a\overline{a}(a-\overline{a})-b\overline{b}(a-\overline{a})}
          {(a^2-\overline{a}^2)-(b+\overline{b})(a-\overline{a})}
    =\frac{a\overline{a}-b\overline{b}}{(a+\overline{a})-(b+\overline{b})}.
\end{align*}
The radius $ r $ of $ C $ is
$$
  r=\Big|\frac{a\overline{a}-b\overline{b}}{(a+\overline{a})-(b+\overline{b})}
        -a\Big| 
   =\Big|\frac{-a(a-b)+\overline{b}(a-b)}
              {(a+\overline{a})-(b+\overline{b})}\Big|.
$$
Hence, $ C $ is the circle defined by
$$
 \left\{ z: \Big|z-\frac{a\overline{a}-b\overline{b}}
              {(a+\overline{a})-(b+\overline{b})}\Big| =\Big|\frac{-a(a-b)+\overline{b}(a-b)}
              {(a+\overline{a})-(b+\overline{b})}\Big| \right\}.
$$
Let $ x $  be the intersection point of $ C $ and the real axis.
Then, $ x $ is a solution to
$$
     |x\big((a+\overline{a})-(b+\overline{b})\big)
         -(a\overline{a}-b\overline{b})|
    =|-a+\overline{b}||a-b|.
$$
So,
\begin{align*}
  \Big(2x\big( {\rm Re}\,(a-b)\big)-(a\overline{a}-b\overline{b})\Big)^2
   & = (-a+\overline{b})(-\overline{a}+b)(a-b)(\overline{a}-\overline{b})\\
  \Big(2x\big( {\rm Re}\,(a-b)\big)-(a\overline{a}-b\overline{b})\Big)^2
   & =(a\overline{a}+b\overline{b}-ab-\overline{a}\overline{b})
      (a\overline{a}+b\overline{b}-a\overline{b}-\overline{a}b)\\
  4x^2\big( {\rm Re}\,(a-b)\big)^2
    -4x {\rm Re}\,(a-b)(a\overline{a}-b\overline{b})
    & =(ab\overline{b}+\overline{a}b\overline{b}-a\overline{a}b
       -a\overline{a}\overline{b})(a+\overline{a}-b-\overline{b}).
\end{align*}
Simplifying the equation further, we obtain
\begin{equation}\label{eq:astar1}
   {\rm Re}\,(a-b)x^2-(|a|^2-|b|^2)x
     +\big(|a|^2 {\rm Re}\,(b)-|b|^2 {\rm Re}\,(a)\big)=0.
\end{equation}
  \end{nonsec}

The equation \eqref{eq:astar1} or the equivalent one
\begin{equation}\label{eq:aster2}
  (a-b+\overline{a}-\overline{b})x^2-2(a\overline{a}-b\overline{b})x
     +\big(a\overline{a}(b+\overline{b})-b\overline{b}(a+\overline{a})\big)=0
\end{equation}
has two solutions.
These two solutions are
\begin{align*}
  ep^+ &=\frac{|a|^2-|b|^2+|a-b||a-\overline{b}|}{2 \, {\rm Re}\,(a-b)}
      =\frac{a\overline{a}-b\overline{b}+|a-b||a-\overline{b}|}
       {a-b+\overline{a}-\overline{b}},\\
  ep^- &=\frac{|a|^2-|b|^2-|a-b||a-\overline{b}|}{2\,  {\rm Re}\,(a-b)}
     =\frac{a\overline{a}-b\overline{b}-|a-b||a-\overline{b}|}
       {a-b+\overline{a}-\overline{b}}.
\end{align*}

%\noindent{\bf Lemma}
\begin{lem} \label{epF} For  $ a,b\in\mathbb{H}^2 $, the two points
$$
  a_{\ast}=ep^+  =\frac{|a|^2-|b|^2+|a-b||a-\overline{b}|}{2 \,{\rm Re}\,(a-b)} \,, \quad
  b_{\ast}= ep^- = \frac{|a|^2-|b|^2-|a-b||a-\overline{b}|}{2 \,{\rm Re}\,(a-b)} \,,
        $$
are the end points of the semicircle through $a$ and $b,$  centered
at a point of the real axis, and the points occur in
the order $ a_{\ast} , a,b,b_{\ast}$ on the semicircle.        
\end{lem}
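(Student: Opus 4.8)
The plan is to split the statement into two parts: first, that $a_*$ and $b_*$ are the endpoints of the semicircle, and second, that the four points occur in the claimed order. For the first part I would observe that the center $m(a,b,\overline{a})$ computed in \ref{geod1} is the \emph{real} number $(|a|^2-|b|^2)/(2\,\mathrm{Re}(a-b))$. Since a circle meets the real axis orthogonally exactly when its center lies on that axis, the circle $C=C[a,b,\overline{a}]$ is orthogonal to $\mathbb{R}$, so its upper half is a hyperbolic geodesic of $\mathbb{H}^2$. As $C$ passes through $a$ and $b$, this upper semicircle is precisely the geodesic line through $a$ and $b$ described in \ref{geod1}, centered at a point of $\mathbb{R}$, and its two endpoints on $\mathbb{R}$ are the real roots of \eqref{eq:astar1}, namely $ep^+=a_*$ and $ep^-=b_*$.

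For the ordering I would parametrize the semicircle, centered at the real point $c_0$ with radius $R$, as $c_0+R e^{i\theta}$ with $\theta\in[0,\pi]$. Since $\mathrm{Re}(c_0+R e^{i\theta})=c_0+R\cos\theta$ is strictly monotone in $\theta$, the position of a point along the semicircle is determined by its real part; hence the claimed order $a_*,a,b,b_*$ is equivalent to the four real numbers $a_*,\mathrm{Re}(a),\mathrm{Re}(b),b_*$ being monotone. Writing $a=\alpha+i\gamma$ and $b=\beta+i\delta$ with $\gamma,\delta>0$, I would treat the two cases $\mathrm{Re}(a-b)=\alpha-\beta>0$ and $\alpha-\beta<0$ (the excluded case $\alpha=\beta$ corresponds to a vertical geodesic with one endpoint at $\infty$, where the formulas degenerate into the linear case of \eqref{eq:astar1}).

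The core of the argument is then two scalar inequalities. A direct computation gives
\[
a_*-\alpha=\frac{-(\alpha-\beta)^2+(\gamma^2-\delta^2)+|a-b|\,|a-\overline{b}|}{2(\alpha-\beta)},
\qquad
\beta-b_*=\frac{-(\alpha-\beta)^2-(\gamma^2-\delta^2)+|a-b|\,|a-\overline{b}|}{2(\alpha-\beta)},
\]
and using $|a-b|^2=(\alpha-\beta)^2+(\gamma-\delta)^2$ and $|a-\overline{b}|^2=(\alpha-\beta)^2+(\gamma+\delta)^2$ one checks that both numerators are positive: squaring $|a-b|\,|a-\overline{b}|>|(\alpha-\beta)^2-(\gamma^2-\delta^2)|$ and $|a-b|\,|a-\overline{b}|>|(\alpha-\beta)^2+(\gamma^2-\delta^2)|$ reduces, after cancellation, to $4(\alpha-\beta)^2\gamma^2>0$ and $4(\alpha-\beta)^2\delta^2>0$ respectively, both valid since $\gamma,\delta>0$ and $\alpha\neq\beta$. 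Consequently the sign of each of $a_*-\alpha$ and $\beta-b_*$ equals the sign of $\alpha-\beta$. In the case $\alpha>\beta$ this yields $a_*>\alpha>\beta>b_*$, and in the case $\alpha<\beta$ it yields $a_*<\alpha<\beta<b_*$; by the monotonicity of the real part along the semicircle, both chains describe the same traversal order $a_*,a,b,b_*$.

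I expect the only real obstacle to be bookkeeping: keeping the sign of $\mathrm{Re}(a-b)$ straight so that the endpoint labeled $a_*$ is the one adjacent to $a$, and justifying the squaring step by verifying that $|a-b|\,|a-\overline{b}|$ dominates the (possibly negative) right-hand sides before squaring. The algebra itself is routine and could be confirmed with the same computer-algebra tools used elsewhere in the paper.
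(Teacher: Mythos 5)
Your proof is correct and follows essentially the same route as the paper: both rely on the computation in \ref{geod1} identifying $ep^{+}$ and $ep^{-}$ as the real roots of \eqref{eq:astar1}, i.e.\ the endpoints of the semicircle with real center, and then settle the labelling by a case analysis on the sign of ${\rm Re}\,(a-b)$. The only difference is that you additionally verify the interlacing $a_*>{\rm Re}\,(a)>{\rm Re}\,(b)>b_*$ (resp.\ reversed) explicitly --- your squaring step reducing to $4(\alpha-\beta)^2\gamma^2>0$ and $4(\alpha-\beta)^2\delta^2>0$ checks out --- whereas the paper's proof treats that interlacing as geometrically evident and only determines which root is adjacent to $a$.
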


%\noindent{\it Proof.}\quad
\begin{proof}

In the case of $  {\rm Re}\,a> {\rm Re}\,b $, we have 
 $ a_{\ast}=\max\{ep^+,ep^-\} $.
Since $  {\rm Re}\,(a-b)>0 $,
$$
  ep^-=\frac{|a|^2-|b|^2-|a-b||a-\overline{b}|}{2\,  {\rm Re}\,(a-b)}
      <\frac{|a|^2-|b|^2+|a-b||a-\overline{b}|}{2\, {\rm Re}\,(a-b)}
      =ep^+
$$
holds and $ a_{\ast}=ep^+ $.

In the other case, $  {\rm Re}\,a< {\rm Re}\,b $, we have 
 $ a_{\ast}=\min\{ep^+,ep^-\} $.
Since $  {\rm Re}\,(a-b)<0 $,
$$
  ep^+=\frac{|a|^2-|b|^2+|a-b||a-\overline{b}|}{2\,  {\rm Re}\,(a-b)}
      <\frac{|a|^2-|b|^2-|a-b||a-\overline{b}|}{2\,  {\rm Re}\,(a-b)}
      =ep^-
$$
holds and $ a_{\ast}=ep^+ $.
In both cases, we obtain $ a_{\ast}=ep^+ $. %\hfill $ \square $
\end{proof}

The points $a_{\ast}$ and  $b_{\ast}$ are called the {\it  endpoints} of the hyperbolic
line through $a$ and $b.$ 
%and they occur on the hyperbolic line in the order $a_{\ast},a,b,b_{\ast}.$
\bigskip

%\noindent{\bf Lemma}
\begin{lem} \label{quadr} For  $ a,b\in\mathbb{H}^2 $, the cross-ratio is
$$
 \big| a_{\ast},a,b,b_{\ast}\big|
   =\frac{|a-\overline{b}|+|a-b|}{|a-\overline{b}|-|a-b|}.
$$
\end{lem}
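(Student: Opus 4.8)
The plan is to recognize both sides of the claimed identity as the exponential $e^{\rho_{\mathbb{H}^2}(a,b)}$ of the hyperbolic distance, and to exploit the Möbius invariance of the absolute ratio to evaluate the left-hand side in a normalized position.

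First I would dispose of the right-hand side, which is elementary. Setting $t = |a-b|/|a-\overline{b}|$, formula \eqref{rhoH} gives $t = \th(\rho_{\mathbb{H}^2}(a,b)/2)$, so the stated quotient becomes $\frac{|a-\overline{b}|+|a-b|}{|a-\overline{b}|-|a-b|} = \frac{1+t}{1-t}$. Applying the identity $\frac{1+\th x}{1-\th x} = e^{2x}$ with $x = \rho_{\mathbb{H}^2}(a,b)/2$, this equals $e^{\rho_{\mathbb{H}^2}(a,b)}$. It therefore suffices to prove that $|a_{\ast},a,b,b_{\ast}| = e^{\rho_{\mathbb{H}^2}(a,b)}$.

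For the left-hand side I would use that both the absolute ratio and $\rho_{\mathbb{H}^2}$ are invariant under the Möbius automorphisms of $\mathbb{H}^2$. By Lemma \ref{epF} the points $a_{\ast}, b_{\ast}$ are the endpoints of the geodesic through $a,b$, occurring in the order $a_{\ast},a,b,b_{\ast}$. A suitable automorphism carries this geodesic to the positive imaginary axis, sending $a_{\ast}\mapsto 0$, $b_{\ast}\mapsto\infty$, and $a\mapsto is$, $b\mapsto ir$ with $0<s<r$ (the ordering along the geodesic being preserved). Evaluating directly from the definition, with $q(0,ir)=r/\sqrt{1+r^2}$, $q(is,\infty)=1/\sqrt{1+s^2}$, $q(0,is)=s/\sqrt{1+s^2}$ and $q(ir,\infty)=1/\sqrt{1+r^2}$, gives $|0,is,ir,\infty| = r/s$. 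Since \eqref{rhoH} yields $\rho_{\mathbb{H}^2}(is,ir)=\log(r/s)$, invariance gives $|a_{\ast},a,b,b_{\ast}| = r/s = e^{\rho_{\mathbb{H}^2}(a,b)}$, which matches the right-hand side and finishes the proof.

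The step requiring the most care is the bookkeeping in this normalization: one must verify that the automorphism sends $a_{\ast}\mapsto 0$ and $b_{\ast}\mapsto\infty$ rather than the reverse, so that the order $a_{\ast},a,b,b_{\ast}$ from Lemma \ref{epF} forces $s<r$ and hence a value exceeding $1$; the opposite assignment would produce $e^{-\rho_{\mathbb{H}^2}(a,b)}<1$, inconsistent with $|a-b|>0$ in the stated quotient. As an alternative that stays closer to the explicit computations of this section, one could instead substitute the closed forms for $a_{\ast},b_{\ast}$ from Lemma \ref{epF} directly into $|a_{\ast},a,b,b_{\ast}| = \frac{|a_{\ast}-b|\,|a-b_{\ast}|}{|a_{\ast}-a|\,|b-b_{\ast}|}$ (all four points being finite, so Euclidean distances apply) and simplify; this route is purely algebraic and well suited to the computer-algebra methods used elsewhere in the paper, but it is appreciably more cumbersome than the invariance argument.
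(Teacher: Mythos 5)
Your proof is correct, but it runs in the opposite logical direction from the paper's. The paper proves this lemma by brute force: it substitutes the explicit formulas for $a_{\ast}=ep^+$ and $b_{\ast}=ep^-$ from Lemma \ref{epF} into $\frac{|a_{\ast}-b||a-b_{\ast}|}{|a_{\ast}-a||b-b_{\ast}|}$ and simplifies using $|a-b|^2|a-\overline{b}|^2=(a-b)(\overline{a}-\overline{b})(a-\overline{b})(\overline{a}-b)$, finally dropping the absolute value because $|a-b|<|a-\overline{b}|$ in $\mathbb{H}^2$; it then deduces Proposition \ref{rhoabsr} ($\rho_{\mathbb{H}^2}=\log|a_{\ast},a,b,b_{\ast}|$) as an immediate corollary. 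You instead prove the content of Proposition \ref{rhoabsr} first, by the classical Beardon-style argument — Möbius invariance of the absolute ratio and of $\rho_{\mathbb{H}^2}$, normalization of the geodesic to the imaginary axis, and the computation $|0,is,ir,\infty|=r/s=e^{\rho_{\mathbb{H}^2}(is,ir)}$ — and then read the lemma off via $\frac{1+\th x}{1-\th x}=e^{2x}$. This is not circular, since you do not invoke Proposition \ref{rhoabsr} but reprove it from scratch, and your bookkeeping of the boundary assignment ($a_{\ast}\mapsto 0$, $b_{\ast}\mapsto\infty$, hence $s<r$) is the right point to be careful about. What each approach buys: yours is shorter and conceptual, leaning on standard invariance facts from the literature; the paper's is self-contained and purely algebraic, consistent with its computer-algebra methodology, and gives an independent derivation of the classical identity rather than assuming the machinery behind it. Your closing remark about substituting the closed forms of $a_{\ast},b_{\ast}$ directly into the cross-ratio is precisely the route the paper takes.
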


\bigskip

%\noindent{\it Proof.}\quad

\begin{proof}

Substituting $ a_{\ast}=ep^+ $ and $ b_{\ast}=ep^- $ into
$$ 
  \big| a_{\ast},a,b,b_{\ast}\big|
   =\frac{|a_{\ast}-b||a-b_{\ast}|}{|a_{\ast}-a||b-b_{\ast}|}
$$
and simplifying, we have
$$
  \big| a_{\ast},a,b,b_{\ast}\big|
   =\bigg|\frac{\big((b-\overline{a})(a-b)-|a-b||a-\overline{b}|\big)
            \big((a-\overline{b})(a-b)+|a-b||a-\overline{b}|\big)}
         {\big((b-\overline{a})(a-b)+|a-b||a-\overline{b}|\big)
           \big((a-\overline{b})(a-b)-|a-b||a-\overline{b}|\big)}\bigg|.
$$
As $|a-b|^2|a-\overline{b}|^2
  =(a-b)(\overline{a}-\overline{b})(a-\overline{b})(\overline{a}-b) $,
we obtain
\begin{align*}
 \big| a_{\ast},a,b,b_{\ast}\big|
%   &=\bigg|\frac{(b-\overline{a})a-\overline{b}b+\overline{b}\overline{a}
%           -|a-b||a-\overline{b}|}
%          {(b-\overline{a})a-\overline{b}b+\overline{b}\overline{a}
%           +|a-b||a-\overline{b}|} \bigg|\\
   &=\bigg|\frac{-(a-\overline{b})(\overline{a}-b)-|a-b||a-\overline{b}|}
          {-(a-\overline{b})(\overline{a}-b)+|a-b||a-\overline{b}|}\bigg| \\
   &=\bigg|\frac{|a-\overline{b}|+|a-b|}{|a-\overline{b}|-|a-b|}\bigg|.
\end{align*}
By the assumption $ a,b\in\mathbb{H}^2 $, the inequality
$ |a-b|<|a-\overline{b}| $ holds, and the assertion is obtained.
%\hfill $ \square $
\end{proof}

The next proposition is a well-known basic fact \cite{b}.
\bigskip

\begin{prop} \label{rhoabsr} For  $ a,b\in\mathbb{H}^2 $
$$
   \rho_{\mathbb{H}^2}(a,b)=\log\big|a_{\ast},a,b,b_{\ast}\big|.
$$
\end{prop}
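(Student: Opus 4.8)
The plan is to combine the two preceding lemmas, since Lemma \ref{quadr} already computes the absolute ratio $\big|a_{\ast},a,b,b_{\ast}\big|$ in closed form, and the identity \eqref{rhoH} defining $\rho_{\mathbb{H}^2}$ is phrased in terms of the very same quantities $|a-b|$ and $|a-\overline{b}|$. So the proposition should reduce to an elementary identity relating $\log$ of the cross-ratio to the inverse hyperbolic tangent appearing in \eqref{rhoH}.

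Concretely, I would start from Lemma \ref{quadr}, which gives
\[
\big|a_{\ast},a,b,b_{\ast}\big|=\frac{|a-\overline{b}|+|a-b|}{|a-\overline{b}|-|a-b|}.
\]
Writing $t=|a-b|/|a-\overline{b}|$ (the same ratio featured in \eqref{rhoH} and in Theorem \ref{my1}), the right-hand side is exactly $(1+t)/(1-t)$. Taking logarithms gives $\log\big|a_{\ast},a,b,b_{\ast}\big|=\log\frac{1+t}{1-t}=2\,\textnormal{arth}(t)$, using the standard identity $\textnormal{arth}(t)=\tfrac12\log\frac{1+t}{1-t}$ valid for $t\in(0,1)$. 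On the other hand, \eqref{rhoH} states $\textnormal{th}\frac{\rho_{\mathbb{H}^2}(a,b)}{2}=t$, i.e. $\rho_{\mathbb{H}^2}(a,b)=2\,\textnormal{arth}(t)$. Comparing the two expressions yields $\rho_{\mathbb{H}^2}(a,b)=\log\big|a_{\ast},a,b,b_{\ast}\big|$, which is the claim.

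I would take care to record that $t\in(0,1)$, so that both $\textnormal{arth}(t)$ and $\log\frac{1+t}{1-t}$ are well defined and the manipulations are legitimate: this follows from the inequality $|a-b|<|a-\overline{b}|$ for $a,b\in\mathbb{H}^2$, already used at the end of the proof of Lemma \ref{quadr}. One should also note that $a\neq b$ is implicit so that $t>0$ and the cross-ratio exceeds $1$, keeping the logarithm positive and consistent with $\rho_{\mathbb{H}^2}(a,b)\geq 0$.

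There is essentially no obstacle here: the genuine geometric content was already carried out in Lemmas \ref{epF} and \ref{quadr}, which identified the endpoints $a_{\ast},b_{\ast}$ and evaluated the absolute ratio. The remaining work is the purely formal observation that the cross-ratio formula and the defining formula \eqref{rhoH} are two encodings of the same hyperbolic-tangent data, bridged by the elementary identity $2\,\textnormal{arth}(t)=\log\frac{1+t}{1-t}$. If anything required care, it would only be matching conventions (that the absolute ratio is taken in the correct cyclic order so that its value is $\geq 1$ rather than its reciprocal), but Lemma \ref{quadr} has already fixed the order as $a_{\ast},a,b,b_{\ast}$ and produced a value $>1$, so the logarithm comes out with the correct sign.
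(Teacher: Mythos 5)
Your proposal is correct and follows the same route as the paper, which simply states that the proposition "is immediately obtained from Lemma \ref{quadr}"; you have just made explicit the elementary step $\log\frac{1+t}{1-t}=2\arth(t)$ linking the cross-ratio of Lemma \ref{quadr} to the defining formula \eqref{rhoH}. Your additional remarks on $t\in(0,1)$ and the ordering of the points are sound bookkeeping but not a departure from the paper's argument.
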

\bigskip

%\noindent{\it Proof.}\quad
\begin{proof}
The assertion is immediately obtained from the above Lemma \ref{quadr}.
%\hfill $ \square $
\end{proof}

\begin{nonsec}{\bf Normalization of point pairs.}
For $a,b \in \mathbb{H}^2,$ the above results yield  formulas for $u_1\equiv(a_*+b_*)/2$
and $\lambda\equiv 1/|a-u_1|.$ By utilizing these formulas and the similarity transformation
$$
z \mapsto  \lambda(z-u_1)\,,\quad z\in \mathbb{H}^2\,,
$$
which maps $ \mathbb{H}^2$ onto itself,  we see that $a$ and $b$ are mapped into
$S^1(0,1).$ 
Obviously, $v_{ \mathbb{H}^2}(a,b) $ is invariant under this transformation and
so is $\rho_{ \mathbb{H}^2}(a,b)\,. $
Therefore it will be often convenient below to consider a normalized point pair 
$a,b \in S^1(0,1)\cap  \mathbb{H}^2.$ 

\end{nonsec}

\begin{nonsec}{\bf Horizontal and vertical point pairs and $v_{\mathbb{H}^2}.$}\label{2.19}
We consider now two special locations for a point pair $\{a,b\}:$ (1) ${\rm Re}(a)=  
{\rm Re}(b)=0,$ 
(2) ${\rm Im}(a)=  {\rm Im}(b).$ In the case (1), the hyperbolic disk
 $B_{\rho_{\mathbb{H}^2}}(m,M)$, 
centered at the
hyperbolic midpoint $m$ of $a$ and $b$ and with the radius 
$M= \rho_{\mathbb{H}^2}(a,b)/2,$
  is clearly the same as the Euclidean disk 
 $B^2(i|m| {\rm ch} M,|m| {\rm sh} M), $ see \cite[Fig. 4.4 p.53]{hkv}.
 For brevity, write $ec=  i |m| {\rm ch} M, \, ec_1= ec+ |m|.$ 
 The circle $S^1(ec_1,|m| {\rm ch} M)$ passes through the points $a,b, |m|$ and is tangent
 to the real axis at the point $|m|.$ 
Let 
$$\{ a_1, b_1 \}= S^1(0,|m|)\cap S^1(i|m| {\rm ch} M,|m| {\rm sh} M). $$
Then $ \rho_{\mathbb{H}^2}(a_1,b_1) =2M.$ 
Because the inscribed angle $\measuredangle(a,|m|,b)$
of the circle $S^1(ec_1,|m| {\rm ch} M)$ equals half of the central angle 
$\measuredangle(a,ec_1,b)$, we see that
    $$\theta=v_{\mathbb{H}^2}(a,b)=
\measuredangle(a,|m|,b)= \measuredangle(ec,ec_1,b)\,, $$
 $$\sin \theta = \frac{|ec-b|}{|ec_1-b|}=\frac{{\rm sh} M}{{\rm ch} M} = 
 {\rm th}(\rho_{\mathbb{H}^2}(a,b)/2)$$
and also  

\begin{equation} \label{1stsinth}
\begin{cases}
{\displaystyle \theta=v_{\mathbb{H}^2}(a,b)=
\measuredangle(a,|m|,b)= \measuredangle(ec,ec_1,b)\,,}&\\[3mm]
  {\displaystyle \sin(\theta)={\rm th} \frac{\rho_{\mathbb{H}^2}(a,b)}{2}\,,}&\\[3mm]
 {\displaystyle \rho_{\mathbb{H}^2}(a,b)= \rho_{\mathbb{H}^2}(a_1,b_1)\,.}&
\end{cases}
\end{equation} 

These observations also show that $v_{\mathbb{H}^2}$ is not invariant under
M\"obius automorphisms of ${\mathbb{H}^2}$ whereas $\rho_{\mathbb{H}^2}$
has this invariance property. We also see that for $t>1$
\begin{equation}
v_{\mathbb{H}^2}(i,it) = {\rm arcsin}\left(\frac{t-1}{t+1}\right).
\end{equation}

In the case (2), trivially $v_{\mathbb{H}^2}(a,b)=
\measuredangle(a,{\rm Re}(a+b)/2,b)$.
\end{nonsec}
 
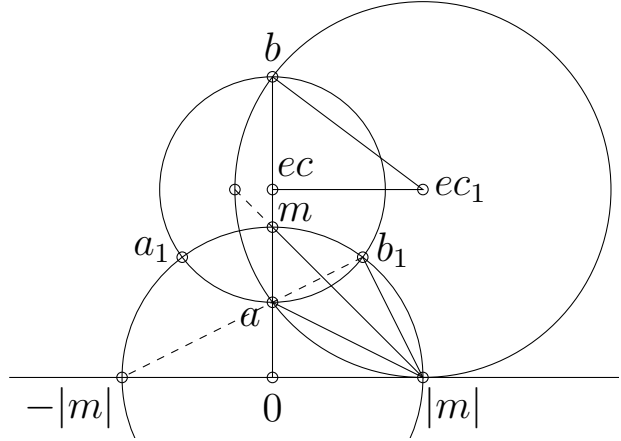
\begin{figure}
    \centering
    \begin{tikzpicture}
    \clip (-3.5,-0.8) rectangle (4.6,5);
    \draw (0,1) circle (0.7mm);
    \draw (0,4) circle (0.7mm);
    \draw (0,2) circle (0.7mm);
    \draw (0,2.5) circle (0.7mm);
    \draw (2,2.5) circle (0.7mm);
    \draw (-1.2,1.6) circle (0.7mm);
    \draw (1.2,1.6) circle (0.7mm);
    \draw (-0.5,2.5) circle (0.7mm);
    \draw (-2,0) circle (0.7mm);
    \draw (2,0) circle (0.7mm);
    \draw (0,0) circle (0.7mm);
    \draw (0,4) -- (0,0);
    \draw (0,4) -- (2,2.5) -- (0,2.5);
    \draw[dashed] (-0.5,2.5) -- (0,2);
    \draw (0,2) -- (2,0);
    \draw[dashed] (1.2,1.6) -- (-2,0);
    \draw (0,1) -- (2,0) -- (1.2,1.6);
    \draw (-3.5,0) -- (5,0);
    \draw (0,2.5) circle (1.5cm);
    \draw (0,0) circle (2cm);
    \draw (2,2.5) circle (2.5cm);
    \node[scale=1.3] at (-0.3,0.8) {$a$};
    \node[scale=1.3] at (0,4.4) {$b$};
    \node[scale=1.3] at (0.3,2.2) {$m$};
    \node[scale=1.3] at (0.3,2.8) {$ec$};
    \node[scale=1.3] at (2.5,2.5) {$ec_1$};
    \node[scale=1.3] at (-1.6,1.7) {$a_1$};
    \node[scale=1.3] at (1.6,1.7) {$b_1$};
    \node[scale=1.3] at (-2.7,-0.4) {$-|m|$};
    \node[scale=1.3] at (2.4,-0.4) {$|m|$};
    \node[scale=1.3] at (0,-0.4) {$0$};
    \end{tikzpicture}
    \caption{The hyperbolic disk $B_{\rho_{\mathbb{H}^2}}(m,M)$ centered at the hyperbolic \newline midpoint $m$ of $a$ and $b$ and with the radius $M= \rho_{\mathbb{H}^2}(a,b)/2.$ Now,
 $  v_{\mathbb{H}^2}(a_1,b_1)=
\measuredangle(a_1,0,b_1)=2  v_{\mathbb{H}^2}(a,b) \,,\quad  {\rm and}\,\,  \quad
 \rho_{\mathbb{H}^2}(a_1,b_1)=\rho_{\mathbb{H}^2}(a,b)\,. $}
    \label{fig1}
\end{figure}

\begin{nonsec}{\bf M\"obius transformations.}\label{mob}
We recall a few formulas  for some basic transformations. 
An inversion in the circle
$S^1(q,r)=\{ z \in \mathbb{C}: |z-q|=r\}$ is defined as follows:
\begin{equation} \label{inv}
z \mapsto q+ \frac{r^2}{\overline{z}-\overline{q}}.
\end{equation}
A reflection of the point $z$ in the line through the two points $a$ and $b$ is defined by
\begin{equation} \label{refl}
w(z) =  \frac{a-b}{\overline{a}-\overline{b}} \overline{z} -  
\frac{a\overline{b}- \overline{a}b}{\overline{a}-\overline{b}}.
\end{equation}

For  $a \in \mathbb{B}^2 \setminus\{0\},$ the  M\"obius automorphism
of  $\mathbb{B}^2$ with the formula
\begin{equation}\label{Ta}
T_a(z) = \frac{z-a}{1-\overline{a}z}\,,
\end{equation}
has the property $T_a(a) = 0.$ Moreover, $\pm a/|a|$ are fixed points of $T_a$, and
$T_a$ maps the diameter $[-a/|a|, a/|a|]$ of the unit disk onto itself.
This automorphism has a factorization in terms of an inversion and an orthogonal map
\cite[p.21]{a}, \cite[p.40, Thm 5.5.1]{b}
\begin{equation}\label{Ta2}
T_a(z)  = (p_a\circ \sigma_a)(z)\,; \quad p_a(z)=-\frac{a}{\overline{a}} \overline{z},\quad
\sigma_a(z)=\frac{a(1-(\overline{z}/\overline{a}))}{1-a\overline{z}}\,.
\end{equation}
Here, $\sigma_a$ is the inversion in the circle $S^1(1/\overline{a}, \sqrt{|a|^{-2}-1}),$
 orthogonal to the unit circle. 
\end{nonsec}

\begin{nonsec}{{\bf Similarity automorphism of $\mathbb{H}^2.$} %{filename: vh0126.tex}
}

For $ a,b\in\mathbb{H}^2 $ with ${\rm Im}(a)\neq {\rm Im}
(b)$,
let
$$
   c=LIS[a,b,0,1]
    =\frac{a\overline{b}-\overline{a}b}{a-\overline{a}-b+\overline{b}} 
    =\frac{{\rm Im}(a\overline{b})}{{\rm Im}(a)-{\rm Im}(b)}
$$
and
$ r^2= |a-c||b-c| $.

Then, we have 
\begin{equation}\label{eq:bcac}
   b-c=b-\frac{a\overline{b}-\overline{a}b}{a-\overline{a}-b+\overline{b}} 
      =\frac{(b-\overline{b})(a-b)}{a-\overline{a}-b+\overline{b}} 
   \quad {\rm and} \quad
   a-c=\frac{(a-\overline{a})(a-b)}{a-\overline{a}-b+\overline{b}} .
\end{equation}

For $ g(z)=c+\dfrac{r^2}{\overline{z-c}} $ and 
$ m(z)=c+\dfrac{|b-c|^2}{\overline{z-c}} $, 
the composition $ m\circ g $ is given as follows.
\begin{align*}
  m\circ g(z) & = c+\frac{|b-c|^2}{\overline{c}+\frac{r^2}{z-c}-\overline{c}}
                = c+\frac{|b-c|^2}{r^2}(z-c)\\
              & = c+\frac{|b-c|^2}{|a-c||b-c|}(z-c)
                = c+\frac{|b-c|}{|a-c|}(z-c).
\end{align*}
From \eqref{eq:bcac} and $ a,b\in\mathbb{H}^2 $,
\begin{align*}
  m\circ g(z)  & = c+\frac{|b-\overline{b}|}{|a-\overline{a}|}(z-c)
                 =c+\frac{b-\overline{b}}{a-\overline{a}}(z-c) \\
               & = \frac{{\rm Im}(b)}{{\rm Im}(a)}z
                  +\frac{{\rm Im}(a\overline{b})}{{\rm Im}(a)-{\rm Im}(b)}
                   \frac{{\rm Im}(a)-{\rm Im}(b)}{{\rm Im}(a)}\\
              & = \frac{{\rm Im}(b)}{{\rm Im}(a)}z
                  +\frac{{\rm Im}(a\overline{b})}{{\rm Im}(a)}.
\end{align*}

For the map $ m\circ g $, we have
$$
   m\circ g(a)=\frac{{\rm Im}(b)}{{\rm Im}(a)}a
                  +\frac{{\rm Im}(a\overline{b})}{{\rm Im}(a)}
              =\frac{(b-\overline{b})a+a\overline{b}-\overline{a}b}
                    {a-\overline{a}}
              =\frac{b(a-\overline{a})}{a-\overline{a}}=b.
$$
\end{nonsec} 

%\hfill{filename: vh24-0306-v2.tex}
\begin{lem} \label{vHquot}
(1) Suppose $ a,b\in S^1(0,1)\cap \mathbb{H}^2=P$ and $ h $ is a
M\"obius mapping of $ \mathbb{H}^2 $ onto itself and of $ P $ onto itself.
Then,
$$
    1/2 < v_{\mathbb{H}^2}(h(a), h(b))/v_{\mathbb{H}^2}(a,b) <2
$$
holds.

(2) Suppose that $L$ is a line and
$ a,b\in L\cap \mathbb{H}^2$ and $ h $ is a
M\"obius mapping of $ \mathbb{H}^2 $ onto itself and of $  L\cap \mathbb{H}^2 $ onto itself.
Then,
$$
    v_{\mathbb{H}^2}(h(a), h(b))=v_{\mathbb{H}^2}(a,b) 
$$
holds.
\end{lem}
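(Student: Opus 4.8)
The plan is to reduce part (1) to a two-sided comparison of $v_{\mathbb{H}^2}$ with a single hyperbolic invariant, and to settle part (2) by classifying the maps in the hypothesis. For (1), the decisive point is that $P=S^1(0,1)\cap\mathbb{H}^2$ is a hyperbolic geodesic, so every admissible $h$ is a hyperbolic isometry that preserves $P$; consequently $\rho_{\mathbb{H}^2}(h(a),h(b))=\rho_{\mathbb{H}^2}(a,b)$ and both pairs again lie on $P$. Writing $\rho=\rho_{\mathbb{H}^2}(a,b)$ and $V_0={\rm arcsin}({\rm th}(\rho/2))$, I would prove the estimate
\[
V_0<v_{\mathbb{H}^2}(a,b)\le 2V_0\qquad\text{for all }a,b\in P,
\]
where $V_0$ depends only on $\rho$. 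Since $h$ preserves $\rho$, the two angles $v_{\mathbb{H}^2}(a,b)$ and $v_{\mathbb{H}^2}(h(a),h(b))$ then lie in the same half-open interval $(V_0,2V_0]$; as both exceed $V_0$ and are at most $2V_0$, their quotient lies strictly between $V_0/(2V_0)=1/2$ and $2V_0/V_0=2$, which is exactly assertion (1).

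For part (2) I would show that $h$ is forced to preserve Euclidean angles on the boundary. As $h$ maps both circles $\widehat{\mathbb{R}}=\partial\mathbb{H}^2$ and $L\cup\{\infty\}$ onto themselves, it permutes their common points. If $L$ is not vertical, then apart from $\infty$ there is at most one further common point $x_1=L\cap\mathbb{R}$, and $h$ cannot interchange $x_1$ with $\infty$: such an $h$ would be an involution whose unique interior fixed point lies on the vertical geodesic through $x_1$, and preserving $L$ would place that point on $L$, which is impossible for non-vertical $L$. Hence $h$ fixes $\infty$ and is affine, $h(z)=\alpha z+\beta$ with $\alpha>0,\ \beta\in\mathbb{R}$, i.e. a Euclidean similarity; similarities preserve Euclidean angles and map $\partial\mathbb{H}^2$ onto itself, so $v_{\mathbb{H}^2}$ is unchanged. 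If instead $L$ is vertical, then $L\cap\mathbb{H}^2$ is a geodesic and both $(a,b)$ and $(h(a),h(b))$ are vertical pairs; for these \eqref{1stsinth} gives $\sin v_{\mathbb{H}^2}={\rm th}(\rho_{\mathbb{H}^2}/2)$ with $v_{\mathbb{H}^2}$ acute, so the isometry invariance of $\rho_{\mathbb{H}^2}$ yields $v_{\mathbb{H}^2}(h(a),h(b))=v_{\mathbb{H}^2}(a,b)$.

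It remains to establish the displayed estimate, which is the heart of (1). Both bounds come from tracking $v_{\mathbb{H}^2}$ along the one-parameter orbit of hyperbolic translations of $P$, with $\rho$, and hence $V_0$, held fixed. At the symmetric position $b=-\overline{a}$ the supremum defining $v_{\mathbb{H}^2}$ is attained at $z=0$ by the reflection symmetry of $P$ in the imaginary axis, and the resulting angle $\measuredangle(a,0,-\overline{a})$ equals $2V_0$; this is the inscribed-angle doubling already visible in Figure \ref{fig1}, and it furnishes the upper bound. As the pair slides toward an endpoint $\pm1$, the configuration degenerates and $v_{\mathbb{H}^2}$ decreases to the limiting value $V_0$, the visual angle of the associated vertical pair, which is approached but never attained on $P$. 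I would then show that $v_{\mathbb{H}^2}$ is unimodal along the orbit, peaking at the symmetric position, which gives $V_0<v_{\mathbb{H}^2}(a,b)\le 2V_0$ for every pair on $P$.

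The main obstacle is precisely this monotonicity, and it is delicate because the correct branch of $v_{\mathbb{H}^2}$ switches from acute to obtuse as the pair moves along the orbit: the quantity $\sin v_{\mathbb{H}^2}$ is not monotone there, even though the angle $v_{\mathbb{H}^2}$ is. I expect to control this through the explicit tangent boundary point $z_0$ realizing $v_{\mathbb{H}^2}(a,b)=\measuredangle(a,z_0,b)$, computed from the intersection formulas for circles tangent to $\mathbb{R}$, thereby reducing unimodality to a one-variable estimate in the translation parameter. The two endpoint facts, namely $v_{\mathbb{H}^2}\to V_0$ in the degenerate limit and $v_{\mathbb{H}^2}=2V_0$ at the symmetric position, then follow from \eqref{1stsinth} and the elementary identity $\measuredangle(a,0,-\overline{a})=2V_0$, with the acute/obtuse branch pinned down by the lower bound $v_{\mathbb{H}^2}\ge V_0$.
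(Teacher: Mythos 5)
Your part (2) is sound and in fact more careful than the paper's one-line justification: the paper simply asserts that $h$ must be a similarity, whereas your case split correctly isolates the vertical line, where $h$ need not be a similarity (e.g.\ $z\mapsto -r^2/z$ preserves the positive imaginary axis), and handles it via \eqref{1stsinth} together with the isometry invariance of $\rho_{\mathbb{H}^2}$. The reduction you propose for part (1) is also logically valid as far as it goes: if one knew that $V_0<v_{\mathbb{H}^2}(a,b)\le 2V_0$ for \emph{every} pair on $P$ at hyperbolic distance $\rho$, with $V_0=\arcsin(\th(\rho/2))$ depending only on $\rho$, then the strict bounds $1/2<v_{\mathbb{H}^2}(h(a),h(b))/v_{\mathbb{H}^2}(a,b)<2$ would indeed follow, since $h$ preserves both $\rho$ and $P$.

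The gap is that this two-sided estimate, which in your reduction carries the entire content of part (1), is not proved. The upper bound $v_{\mathbb{H}^2}(a,b)\le 2V_0$ could be supplied independently (it is the tangent-line argument proving \eqref{eq:gene}, which does not invoke the present lemma). But the strict lower bound $v_{\mathbb{H}^2}(a,b)>V_0$ is precisely what the paper later \emph{deduces from} Lemma \ref{vHquot}(1) when proving \eqref{eq:gene0}, so it cannot be imported without circularity; and your proposed substitute --- unimodality of $v_{\mathbb{H}^2}$ along the one-parameter translation orbit, including the switch between the acute and obtuse branches --- is only announced, with the monotonicity explicitly flagged by you as the main obstacle and left unproven. (Note also that full unimodality is more than you need: the two pointwise bounds would suffice.) Until that estimate is established, part (1) is a plan rather than a proof. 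For contrast, the paper avoids any such global bound: it writes $h(z)=(z-\tau)/(1-\tau z)$, takes the extremal boundary points $d$ and $\tilde d=h(d')$ for the two pairs, and splits $\arg\bigl((h(a)-h(d'))/(h(b)-h(d'))\bigr)$ into $\arg\bigl((a-d')/(b-d')\bigr)+\arg\bigl((b-1/\tau)/(a-1/\tau)\bigr)$; each summand is an angle subtended by $a,b$ at a real point (the first inside $(-1,1)$, the second at $1/\tau$ with $|1/\tau|>1$), hence each is strictly less than $v_{\mathbb{H}^2}(a,b)=\measuredangle(a,d,b)$, which yields the factor $2$ directly. You should either adopt that decomposition or actually carry out the two endpoint estimates.
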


\bigskip

\noindent
%{\it Proof.}

\begin{proof}
(1) By the assumptions, $h$ can be written as
\begin{equation}\label{eq:mobH}
   h(z)=\frac{z-\tau}{1-\tau z}, \quad -1<\tau<1.
\end{equation}
If $ a=b $, $  v_{\mathbb{H}^2}(a,b)=v_{\mathbb{H}^2}(h(a), h(b))=0 $.
For $ a\neq b $, without loss of generality, we may assume
$ 0<\arg b<\arg a<\pi $.

Let $d\in\mathbb{R}$ be the point where $v_{\mathbb{H}^2}(a,b)$ is attained, i.e.
$v_{\mathbb{H}^2}(a,b)=\arg\Big(\dfrac{a-d}{b-d}\Big)>0$.

On the other hand, let $\tilde{d}\in\mathbb{R}$ be the point where
$ v_{\mathbb{H}^2}\big(h(a),h(b)\big) $ is attained.
Then, $ \tilde{d} $ can be written as $ \tilde{d}=h(d') $
for some $ d' $ with $-1<d'<1$.
Moreover, $ h $ preserves the orientation of points on the unit circle,
so we have $ 0<\arg h(b)<\arg h(a)<\pi $ and
$$ v_{\mathbb{H}^2}\big(h(a),h(b)\big)
    =\arg\Big(\dfrac{h(a)-h(d')}{h(b)-h(d')}\Big)>0\,,$$
and further
\begin{align*}
    \arg\Big(\dfrac{h(a)-h(d')}{h(b)-h(d')}\Big)
      &=\arg\Big(\dfrac{a-d'}{b-d'}\dfrac{b-\frac1{\tau}}{a-\frac1{\tau}}\Big)
       =\Big|\arg\Big(\dfrac{a-d'}{b-d'}\Big)
        +\arg\Big(\dfrac{b-\frac1{\tau}}{a-\frac1{\tau}}\Big)\Big|\\
     & \leq \Big|\arg\Big(\dfrac{a-d'}{b-d'}\Big)\Big|
        +\Big|\arg\Big(\dfrac{b-\frac1{\tau}}{a-\frac1{\tau}}\Big)\Big|.
\end{align*}
Since the first term on the right side represents
the angle $ \measuredangle(a,h(d'),b) $ with $ -1<h(d')<1 $,
$$
    \arg\Big(\dfrac{a-d'}{b-d'}\Big)<\arg\Big(\dfrac{a-d}{b-d}\Big)
$$
holds.
The second term represents the angle
$ \measuredangle(b,1/\tau,a) $ with $ |1/\tau|>1 $,
which is obviously smaller than the angle
$ \measuredangle(a,\tau',b) $ for any $ -1<\tau'<1 $.
So,
$$
     \Big|\arg\Big(\dfrac{b-\frac1{\tau}}{a-\frac1{\tau}}\Big)\Big|
       <\arg\Big(\dfrac{a-d}{b-d}\Big).
$$
Hence, $ v_{\mathbb{H}^2}\big(h(a),h(b)\big)<2v_{\mathbb{H}^2}(a,b) $ is obtained.
The other inequality is obtained by considering the inverse transformation.

(2) In this case $h$ must be a similarity transformation
and the claim is obviously true.
\end{proof}
%\hfill$\square$

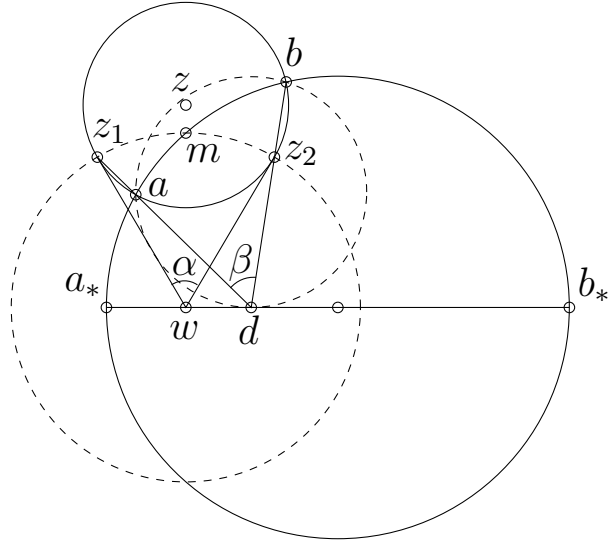
\begin{figure}
    \centering
    \begin{tikzpicture}
    \draw (0,1.5) circle (0.7mm);
    \draw (2,3) circle (0.7mm);
    \draw (0.666,2.321) circle (0.7mm);
    \draw (0.666,2.693) circle (0.7mm);
    \draw (-0.511,2) circle (0.7mm);
    \draw (1.843,2) circle (0.7mm);
    \draw (-0.390,0) -- (5.765,0);
    \draw (-0.390,0) circle (0.7mm);
    \draw (0.666,0) circle (0.7mm);
    \draw (1.535,0) circle (0.7mm);
    \draw (2.687,0) circle (0.7mm);
    \draw (5.765,0) circle (0.7mm);
    \draw (-0.511,2) -- (1.535,0) -- (2,3);
    \draw (-0.511,2) -- (0.666,0) -- (1.843,2);
    \draw[dashed] (1.535,1.535) circle (1.535cm);
    \draw (2.687,0) circle (3.077cm);
    \draw[dashed] (0.666,0) circle (2.321cm);
    \draw (0.666,2.693) circle (1.366cm);
    \node[scale=1.3] at (0.3,1.6) {$a$};
    \node[scale=1.3] at (2.1,3.4) {$b$};
    \node[scale=1.3] at (0.9,2.05) {$m$};
    \node[scale=1.3] at (-0.7,0.3) {$a_*$};
    \node[scale=1.3] at (1.5,-0.3) {$d$};
    \node[scale=1.3] at (6.1,0.3) {$b_*$};
    \node[scale=1.3] at (0.65,-0.3) {$w$};
    \node[scale=1.3] at (0.6,3) {$Z$};
    \node[scale=1.3] at (-0.35,2.35) {$z_1$};
    \node[scale=1.3] at (2.2,2.05) {$z_2$};
    \draw (0.82,0.3) arc (55:125:0.3);
    \node[scale=1.3] at (0.65,0.55) {$\alpha$};
    \draw (1.6,0.4) arc (85:140:0.4);
    \node[scale=1.3] at (1.4,0.6) {$\beta$};
    \end{tikzpicture}
    \caption{The points $a,b$ on the unit circle $S^1$ (on solid line), their hyperbolic middle point $m$, the hyperbolic circle $S_\rho(m,\rho_{\mathbb{H}^2}(a,b)/2)$ (on solid line), and its Euclidean center point $Z$. The points $z_1,z_2$ are the intersection points of the hyperbolic circle $S_\rho(m,\rho_{\mathbb{H}^2}(a,b)/2)$ and the Euclidean circle $S^1({\rm Re}(m),{\rm Im}(m))$ (on dashed line). The supremum $\sup\{\measuredangle(a,x,b)\text{ : }x\in\mathbb{R}^1\}$ is attained  at the point
     $d.$}
    \label{fig2}
\end{figure}

\begin{nonsec}{\bf General point pairs.} The above discussion \ref{2.19} of vertical 
and horizontal point pairs also
yields inequalities for the visual angle metric of general
point pairs $a,b \in \mathbb{H}^2,$  with $ {\rm Re}(a-b)\neq 0$  
and ${\rm Im}(a-b)\neq 0.$

Let $B^2(Z,r)= B_{\rho}(m,M)$ be a disk in  $\mathbb{H}^2$
and $w \in \mathbb{R}=\partial \mathbb{H}^2.$ The two
points $z_1,z_2$ in the intersection
 $S^1(Z,r)\cap S^1(w,|w-m|)$ of these
two orthogonal circles determine the tangent lines
$L[w,z_j], j=1,2,$ to $S^1(Z,r)$ at the points  $z_1,z_2.$ Now the angle
$\measuredangle(z_1,w,z_2)$ defines the visual angle of
$B^2(Z,r)$ at $w.$ It is clear that this angle is maximal
when $|w-m|$ is minimal, i.e. ${\rm Re}(w)={\rm Re}(m).$
We utilize these ideas in the proof of the next lemma.
%and hence by \ref{219}

\end{nonsec}

\bigskip

\noindent
%{\bf Claim.}

\begin{lem} \label{genpairs}
 For $a,b \in  \mathbb{H}^2$, we have
\begin{equation} \label{eq:gene0}
{\rm arcsin} \left({\rm th}\frac{\rho_{ \mathbb{H}^2}(a,b)}{2} \right) \le 
v_{ \mathbb{H}^2}(a,b) \,,
\end{equation}
\begin{equation}\label{eq:gene}
  \frac{v_{ \mathbb{H}^2}(a,b)}{2} \le
  \frac{v_{ \mathbb{H}^2}(z_1,z_2)}{2} = {\rm arcsin} \left( {\rm th} \frac{\rho_{\mathbb{H}^2}(a,b)}{2}\right).
\end{equation}
Here $z_1$ and $z_2$ are as in Figure \ref{fig2}.
\end{lem}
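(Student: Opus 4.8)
The plan is to realize the hyperbolic disk $B_{\rho_{\mathbb{H}^2}}(m,M)$, where $m$ is the hyperbolic midpoint of $a,b$ and $M=\rho_{\mathbb{H}^2}(a,b)/2$, as a Euclidean disk $B^2(Z,r)$ with
\[
Z={\rm Re}(m)+i\,{\rm Im}(m)\ch M,\qquad r={\rm Im}(m)\sh M,
\]
so that $a,b$ lie on the boundary circle $S^1(Z,r)$. Both displayed lines will then be read off from this single picture, exactly as prepared in the paragraph on general point pairs.

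For the second line \eqref{eq:gene} I would first treat $z_1,z_2$. They are the intersection of $S^1(Z,r)$ with the orthogonal circle $S^1(w,|w-m|)$, $w={\rm Re}(m)$, hence are symmetric in the vertical line ${\rm Re}(z)={\rm Re}(m)$ and in particular satisfy ${\rm Im}(z_1)={\rm Im}(z_2)$. Thus $\{z_1,z_2\}$ is a horizontal pair and, by case (2) of \ref{2.19}, $v_{\mathbb{H}^2}(z_1,z_2)=\measuredangle(z_1,w,z_2)$. Orthogonality of the two circles makes the segments $[w,z_1]$ and $[w,z_2]$ tangent to $S^1(Z,r)$, so
\[
\tfrac12 v_{\mathbb{H}^2}(z_1,z_2)=\arcsin\frac{r}{|w-Z|}=\arcsin\frac{{\rm Im}(m)\sh M}{{\rm Im}(m)\ch M}=\arcsin\Big(\th\tfrac{\rho_{\mathbb{H}^2}(a,b)}{2}\Big),
\]
which is the asserted equality. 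For the inequality $v_{\mathbb{H}^2}(a,b)\le v_{\mathbb{H}^2}(z_1,z_2)$, note that for every $x\in\mathbb{R}$ the points $a,b$ lie in $\overline{B^2(Z,r)}$, so $\measuredangle(a,x,b)$ is at most the visual angle $2\arcsin(r/|x-Z|)$ of the whole disk at $x$; since ${\rm Re}(Z)={\rm Re}(m)$, the quantity $|x-Z|$ is minimized over $x\in\mathbb{R}$ at $x=w$, so this disk angle is largest at $w$, where it equals $\measuredangle(z_1,w,z_2)=v_{\mathbb{H}^2}(z_1,z_2)$. Taking the supremum over $x$ gives \eqref{eq:gene}.

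For the first line \eqref{eq:gene0} I would work at the real point $x_0$ where $v_{\mathbb{H}^2}(a,b)$ is attained; there the circle $C[a,b,x_0]$ is tangent to $\mathbb{R}$, and if $R_0$ denotes the radius of the smaller of the two circles through $a,b$ tangent to $\mathbb{R}$, the inscribed-angle relation gives $|a-b|=2R_0\sin v_{\mathbb{H}^2}(a,b)$. If $v_{\mathbb{H}^2}(a,b)\ge\pi/2$ then \eqref{eq:gene0} is immediate because $\arcsin\th(\rho_{\mathbb{H}^2}(a,b)/2)<\pi/2$. In the remaining range $v_{\mathbb{H}^2}(a,b)<\pi/2$, using $\th(\rho_{\mathbb{H}^2}(a,b)/2)=|a-b|/|a-\overline{b}|$ from \eqref{rhoH}, the inequality \eqref{eq:gene0} is equivalent to
\[
2R_0\le |a-\overline{b}|.
\]

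I expect this last inequality to be the main obstacle. Geometrically it says that, among all hyperbolic diameters of the fixed circle $S^1(Z,r)=\partial B_{\rho_{\mathbb{H}^2}}(m,M)$, the vertical pair $\{ie^{-M},\,ie^{M}\}$ (normalizing $m=i$), for which $2R_0=|a-\overline{b}|$ with equality, minimizes the visual angle. I would prove it by this normalization together with the half-turn relation $b=-1/a$ valid on $S^1(Z,r)$, which forces $\arg a+\arg b=\pi$; then both $R_0$ and $|a-\overline{b}|$ become explicit functions of the single rotation angle, and the estimate reduces to a one-variable inequality whose equality case is precisely the vertical configuration. The monotone dependence of $v_{\mathbb{H}^2}(a,b)$ on this angle, increasing from the vertical value $\arcsin\th M$ up to the horizontal value $2\arcsin\th M=v_{\mathbb{H}^2}(z_1,z_2)$, is what simultaneously pins down both ends of the chain \eqref{eq:gene0}--\eqref{eq:gene}.
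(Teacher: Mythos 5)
Your treatment of \eqref{eq:gene} is correct and in fact cleaner than the paper's: you obtain the equality $\tfrac12 v_{\mathbb{H}^2}(z_1,z_2)=\arcsin(r/|w-Z|)=\arcsin(\th M)$ directly from the Euclidean data $Z={\rm Re}(m)+i\,{\rm Im}(m)\ch M$, $r={\rm Im}(m)\sh M$ of the hyperbolic disk, whereas the paper recomputes $Z$ and $r$ by eliminating $\overline z$ from two circle equations with a computer algebra resultant; and your inequality $\measuredangle(a,x,b)\le 2\arcsin(r/|x-Z|)$, maximized at $x=w$, is a tighter argument than the paper's appeal to tangent lines. That half of the lemma is fine.

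The problem is \eqref{eq:gene0}. Your reduction is sound: with $R_0$ the radius of the smaller tangent circle, the tangent--chord relation gives $\sin v_{\mathbb{H}^2}(a,b)=|a-b|/(2R_0)$, the case $v_{\mathbb{H}^2}(a,b)\ge\pi/2$ is trivial, and in the remaining case \eqref{eq:gene0} is equivalent to $2R_0\le|a-\overline b|$. But you then stop: you explicitly call this "the main obstacle," and what follows is a plan (normalize $m=i$, use $b=-1/a$, reduce to a one-variable inequality) rather than a proof. The one-variable inequality is never written down, let alone verified, and the accompanying monotonicity claim for $v_{\mathbb{H}^2}$ along the rotation parameter is also asserted without argument. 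Note that $2R_0\le|a-\overline b|$ genuinely fails without the hypothesis $v_{\mathbb{H}^2}(a,b)<\pi/2$ (e.g.\ for the horizontal pair $a=-s+i$, $b=s+i$ one gets $2R_0=s^2+1$ versus $|a-\overline b|=2\sqrt{s^2+1}$, which reverses for $s^2>3$), so the remaining estimate really does have to interact with that hypothesis and cannot be dismissed as routine. The paper avoids this entirely by a different mechanism: it slides $a,b$ along their common hyperbolic geodesic to a horizontal position $h(a),h(b)$ with the same $\rho_{\mathbb{H}^2}$-distance, where $v_{\mathbb{H}^2}(h(a),h(b))=2\arcsin(\th(\rho_{\mathbb{H}^2}(a,b)/2))$ exactly, and then invokes the factor-$2$ quasi-invariance of Lemma \ref{vHquot}(1) to get $v_{\mathbb{H}^2}(a,b)>\tfrac12 v_{\mathbb{H}^2}(h(a),h(b))$. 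Either carry out your one-variable computation in full, or substitute the paper's Lemma \ref{vHquot}(1) argument; as written, the lower bound is not proved.
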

\bigskip

\noindent
%{\it Proof.}
\begin{proof}
For the proof of \eqref{eq:gene0}, let $h$ be a M\"obius automorphism of $ \mathbb{H}^2$  which maps the semicircle
$S^1(w,r) \cap \mathbb{H}^2, w \in \mathbb{R},$ through $a$ and $b$ onto itself with
${\rm Im}(h(a))={\rm Im}(h(b))\,. $ Then, by the property of horizontal point pairs \ref{2.19},
$$
\frac{v_{ \mathbb{H}^2}(h(a),h(b))}{2}={\rm arcsin} \left({\rm th}
\frac{\rho_{ \mathbb{H}^2}(h(a),h(b))}{2} \right)= {\rm arcsin} \left({\rm th}
\frac{\rho_{ \mathbb{H}^2}(a,b)}{2} \right),
$$
which together with  Lemma \ref{vHquot}(1) yields
$$v_{\mathbb{H}^2}(a,b) \ge
\frac{v_{ \mathbb{H}^2}(h(a),h(b))}{2}
={\rm arcsin} \left({\rm th} \frac{\rho_{ \mathbb{H}^2}(a,b)}{2}\right)\,.
$$

For the proof of  \eqref{eq:gene}, fix  two points $ z_1 $ and $ z_2 $ with $$ {\rm Im}(z_1) ={\rm Im}(z_2) \,,
\quad  \sin\Big(\dfrac{v_{ \mathbb{H}^2}(a,b)}{2}\Big) =
   \sin \Big(\dfrac{v_{ \mathbb{H}^2}(z_1,z_2)}{2}\Big)\,,$$
which gives the maximum of $  \sin\Big(\dfrac{v_{ \mathbb{H}^2}(a,b)}{2}\Big) $.
Two points $ a $ and $ b $ with 
$$ {\rm Re}(a) ={\rm Re}(b) \,, \quad
  \sin\Big(\dfrac{v_{ \mathbb{H}^2}(a,b)}{2}\Big) =
   \dfrac12 \sin \Big(\dfrac{v_{ \mathbb{H}^2}(z_1,z_2)}{2}\Big)$$
give the minimum of $  \sin\Big(\dfrac{v_{ \mathbb{H}^2}(a,b)}{2}\Big) $.

For the general case, suppose ${\rm Re}(a)<{\rm Re}(b)$. Now, $v_{\mathbb{H}^2}(z_1,z_2)$ is the angle between the tangent lines at $z_1$ and $z_2$ on $S^1(Z,r)=\partial B_\rho(m,M)$ where $z_1,z_2$ and $Z$ are as  in Figure \ref{fig2}. Also, $v_{\mathbb{H}^2}(a,b)$ is the angle between $L[z_1,a]$ and $L[z_2,b]$. So, clearly,
\begin{align*}
v_{\mathbb{H}^2}(a,b)\leq v_{\mathbb{H}^2}(z_1,z_2).    
\end{align*}
Since $0\leq v_{\mathbb{H}^2}(z_1,z_2)\leq\pi$, we have
\begin{align*}
0\leq\frac{v_{\mathbb{H}^2}(a,b)}{2}\leq\frac{v_{\mathbb{H}^2}(z_1,z_2)}{2}\leq\frac{\pi}{2}.    
\end{align*}
Therefore, we have
\begin{align*}
\sin\left(\frac{v_{\mathbb{H}^2}(a,b)}{2}\right)\leq\sin\left(\frac{v_{\mathbb{H}^2}(z_1,z_2)}{2}\right).    
\end{align*}
So, the first inequality in \eqref{eq:gene} holds.

\medskip

Next, consider the latter equality in equation \eqref{eq:gene}.

Let $ S_{ab} $ be a circle passing through two points $a,b$
and orthogonal to the real axis.
Then $ S^1(Z,r) $ is a circle passing through points $ a,b $ and orthogonal
to $ S_{ab} $.
The center of $ S_{ab} $  is given by
\begin{equation}\label{eq:centerab}
    C_{ab}=m(a,\overline{a},b)
         =\frac{a\overline{a}-b\overline{b}}{a-b+\overline{a}-\overline{b}}
         =\frac{|a|^2-|b|^2}{2{\rm Re}(a-b)} \ (\in\mathbb{R}).
\end{equation}
The equations of the lines through $ C_{ab} $ and $ a $
and through $C_{ab}$ and $b$ are given by
\begin{align*}
   & (\overline{a}-C_{ab})z+(a-C_{ab})\overline{z}
         -(\overline{a}-C_{ab})a-(a-C_{ab})\overline{a}=0,\\
   & (\overline{b}-C_{ab})z+(b-C_{ab})\overline{z}
         -(\overline{b}-C_{ab})b-(b-C_{ab})\overline{b}=0.
\end{align*}
By eliminating $\overline{z}$ from the above two equations,
substituting \eqref{eq:centerab} into it,
and then removing unnecessary factors, we have
$$
    (a+b-\overline{a}-\overline{b})z-(2ab-a\overline{a}-b\overline{b})=0.
$$
(Here, we used the command ``resultant`` of Risa/Asir for eliminating
the variable $ \overline{z} $.)
Therefore, we have $ Z=(2ab-|a|^2-|b|^2)/(a+b-\overline{a}-\overline{b}) $
and
$$
   {\rm Im} Z=\frac{1}{2i}(Z-\overline{Z})
          =\frac{1}{2i}
         \frac{2(ab+\overline{a}\overline{b}-a\overline{a}-b\overline{b})}
              {a+b-\overline{a}-\overline{b}}
          =\frac{|a-\overline{b}|^2}{a+b-\overline{a}-\overline{b}}.
$$
The radius $ r $ of $ S^1(Z,r)$ is
$$
    r^2=(Z-a)(\overline{Z}-\overline{a})
       =-\frac{(a-b)(a-\overline{b})}{a+b-\overline{a}-\overline{b}}
         \frac{(\overline{a}-\overline{b})(\overline{a}-b)}
          {a+b-\overline{a}-\overline{b}}
       =\frac{|a-b|^2|a-\overline{b}|^2}{|a+b-\overline{a}-\overline{b}|^2}.
$$
So, we have
\begin{equation*}\label{eq:sinz}
   \sin \Big(\frac{v_{ \mathbb{H}^2}(z_1,z_2)}{2}\Big) 
     =\frac{r}{{\rm Im}(Z)}
     =\frac{|a-b||a-\overline{b}|}{|a+b-\overline{a}-\overline{b}|}
        \frac{|a+b-\overline{a}-\overline{b}|}{|a-\overline{b}|^2}
     =\frac{|a-b|}{|a-\overline{b}|}.
\end{equation*}
From \eqref{rhoH}, it follows that
$$
   {\rm th}\frac{\rho_{\mathbb{H}^2}(a,b)}{2}=\frac{|a-b|}{|a-\overline{b}|}.
$$
Hence, we have
$$
   \sin\Big(\frac{v_{ \mathbb{H}^2}(a,b)}{2}\Big) =   \sin \Big(\frac{v_{ \mathbb{H}^2}(z_1,z_2)}{2}\Big)
      ={\rm th}\frac{\rho_{\mathbb{H}^2}(a,b)}{2}=\frac{|a-b|}{|a-\overline{b}|}
$$
and the assertion follows because $v_{ \mathbb{H}^2}(a,b)<\pi.$ 
\end{proof}

\begin{cor} \label{genbds}
For $ a,b \in   {\mathbb{H}^2}$, we have
\begin{equation} \label{klvw319}
{\rm arctan}\left({\rm sh}\frac{\rho_{\mathbb{H}^2}(a,b)}{2} \right)
\le v_{\mathbb{H}^2}(a,b) \le 2\, {\rm arctan}\left({\rm sh}\frac{\rho_{\mathbb{H}^2}(a,b)}{2} \right).
\end{equation}
The lower bound is sharp for ${\rm Re}(a)={\rm Re}(b)$
and the upper bound for ${\rm Im}(a)={\rm Im}(b)\,.$
\end{cor}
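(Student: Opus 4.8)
The plan is to derive Corollary~\ref{genbds} directly from Lemma~\ref{genpairs}: no new geometric input is required, since the two estimates of that lemma already \emph{are} the two bounds asserted here, once the inverse trigonometric expressions are rewritten. The single computational ingredient I would isolate first is the elementary identity
\[
  {\rm arcsin}\left({\rm th}\,s\right) = {\rm arctan}\left({\rm sh}\,s\right), \qquad s \ge 0.
\]
To verify it I would put $\theta = {\rm arctan}({\rm sh}\,s) \in [0,\pi/2)$, so that $\tan\theta = {\rm sh}\,s$, and use $1+{\rm sh}^2 s = {\rm ch}^2 s$ together with ${\rm ch}\,s > 0$ to compute $\sin\theta = {\rm sh}\,s/{\rm ch}\,s = {\rm th}\,s$, whence $\theta = {\rm arcsin}({\rm th}\,s)$.

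Applying this identity with $s = \rho_{\mathbb{H}^2}(a,b)/2$, I would simply substitute into Lemma~\ref{genpairs}. The lower estimate \eqref{eq:gene0} becomes the claimed lower bound ${\rm arctan}({\rm sh}(\rho_{\mathbb{H}^2}(a,b)/2)) \le v_{\mathbb{H}^2}(a,b)$, while the equality in the chain \eqref{eq:gene} rewrites $v_{\mathbb{H}^2}(a,b)/2 \le {\rm arcsin}({\rm th}(\rho_{\mathbb{H}^2}(a,b)/2))$ as $v_{\mathbb{H}^2}(a,b) \le 2\,{\rm arctan}({\rm sh}(\rho_{\mathbb{H}^2}(a,b)/2))$, which is exactly the upper bound.

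For the sharpness claims I would appeal to the two special configurations from \ref{2.19}. In the vertical case $\Re(a)=\Re(b)$, the identity $\sin(v_{\mathbb{H}^2}(a,b)) = {\rm th}(\rho_{\mathbb{H}^2}(a,b)/2)$ recorded in \eqref{1stsinth} gives $v_{\mathbb{H}^2}(a,b) = {\rm arcsin}({\rm th}(\rho_{\mathbb{H}^2}(a,b)/2)) = {\rm arctan}({\rm sh}(\rho_{\mathbb{H}^2}(a,b)/2))$, so the lower bound is attained. In the horizontal case $\Im(a)=\Im(b)$, the points $a,b$ themselves play the role of the extremal pair $z_1,z_2$ of Figure~\ref{fig2}, so the inequality $v_{\mathbb{H}^2}(a,b)\le v_{\mathbb{H}^2}(z_1,z_2)$ used in the proof of Lemma~\ref{genpairs} collapses to an equality, and the upper bound is attained.

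Since the whole argument is a substitution, there is no genuine obstacle here; the content lives entirely in Lemma~\ref{genpairs}. The only points deserving a moment of care are the range convention under which ${\rm arcsin}({\rm th}\,s)={\rm arctan}({\rm sh}\,s)$ holds — valid for $s\ge 0$, which suffices as $\rho_{\mathbb{H}^2}\ge 0$ — and confirming that the extremal configurations of \ref{2.19} genuinely saturate the respective inequalities of Lemma~\ref{genpairs} rather than merely being consistent with them.
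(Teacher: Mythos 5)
Your proposal is correct and follows essentially the same route as the paper: the paper's proof likewise consists of the identity $\arctan(\sh u)=\arcsin(\th u)$ for $u>0$ combined with Lemma \ref{genpairs}, with sharpness deferred to \ref{2.19}. Your added verification of the identity and the explicit check that the configurations of \ref{2.19} saturate the two inequalities merely fill in details the paper leaves implicit.
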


\begin{proof} Observe first that 
 the following elementary relation holds for $u>0$:
\begin{equation*} \label{thsh}
{\rm arctan}({\rm sh}\,u)={\rm arcsin}({\rm th}\,u) \,.
\end{equation*}
This relation, together with Lemma \ref{genpairs}, yields
the inequality. The sharpness follows from \ref{2.19}.
\end{proof}

\begin{nonsec}{\bf Remark.}
The inequality \eqref{klvw319} was also given in
 \cite[Theorem 3.19]{klvw}. Our proof is, however,
 different from the earlier proof.
\end{nonsec}

\begin{prop} \label{uform20231209}
 For $ a,b, c\in S^1(0,1)  \cap  {\mathbb{H}^2}$ with 
 $0\le {\rm arg}\, c < \min \{{\rm arg}\, a\,, {\rm arg}\, b\}$,
 \[
\frac{|c+ab|}{|a+b|} < 1\,. 
 \]
\end{prop}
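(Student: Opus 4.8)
The plan is to pass to angular coordinates on the unit circle and reduce the claim to an elementary trigonometric inequality. Write $a=e^{i\alpha}$, $b=e^{i\beta}$, $c=e^{i\gamma}$ with $\alpha,\beta\in(0,\pi)$ and $0\le\gamma<\min\{\alpha,\beta\}$. Using $e^{ix}+e^{iy}=2e^{i(x+y)/2}\cos\frac{x-y}{2}$, one gets $|a+b|=2\left|\cos\frac{\alpha-\beta}{2}\right|$ and, since $ab=e^{i(\alpha+\beta)}$, also $|c+ab|=2\left|\cos\frac{\alpha+\beta-\gamma}{2}\right|$ (the cosine being even). Hence, setting $\theta_1=\frac{\alpha+\beta-\gamma}{2}$ and $\theta_2=\frac{\alpha-\beta}{2}$, the asserted inequality $|c+ab|/|a+b|<1$ is equivalent to $|\cos\theta_1|<|\cos\theta_2|$.

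To compare the two cosines I would square and use the product-to-sum identity
\[
\cos^2\theta_1-\cos^2\theta_2=-\sin(\theta_1+\theta_2)\,\sin(\theta_1-\theta_2).
\]
A direct computation gives $\theta_1+\theta_2=\alpha-\frac{\gamma}{2}$ and $\theta_1-\theta_2=\beta-\frac{\gamma}{2}$. The key point is that both arguments lie strictly in $(0,\pi)$: since $0\le\gamma<\alpha<\pi$ we have $\frac{\alpha}{2}<\alpha-\frac{\gamma}{2}\le\alpha<\pi$, and likewise $0<\beta-\frac{\gamma}{2}<\pi$. Consequently $\sin(\theta_1+\theta_2)>0$ and $\sin(\theta_1-\theta_2)>0$, so the right-hand side above is strictly negative, i.e. $\cos^2\theta_1<\cos^2\theta_2$.

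Finally, since $|\alpha-\beta|<\pi$ we have $|\theta_2|<\frac{\pi}{2}$, so $\cos\theta_2\ne0$ and the denominator $|a+b|=2|\cos\theta_2|$ is positive; taking square roots in $\cos^2\theta_1<\cos^2\theta_2$ yields $|\cos\theta_1|<|\cos\theta_2|$, which is exactly the desired strict inequality. Note that the argument is symmetric in $a$ and $b$, so no ordering assumption on $\alpha,\beta$ is needed. The only place the hypotheses genuinely enter is in forcing the two sine factors to be positive; this sign bookkeeping is the single step requiring care, and it is precisely where the constraint $\gamma<\min\{\alpha,\beta\}$ together with $\alpha,\beta\in(0,\pi)$ is used.
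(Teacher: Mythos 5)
Your proof is correct, and it takes a genuinely different route from the paper's. The paper also passes to angular coordinates $a=e^{i\alpha}$, $b=e^{i\beta}$, $c=e^{i\gamma}$ (after ordering $\gamma<\beta<\alpha$ without loss of generality), but then argues by monotonicity of $\cos$ on $[0,\pi]$, splitting into the two cases $\alpha+\beta-\gamma<\pi$ and $\alpha+\beta-\gamma>\pi$ and, in the second case, replacing the angle by $2\pi-(\alpha+\beta-\gamma)$ before comparing with $\theta=\alpha-\beta$. Your use of the identity
\[
\cos^2\theta_1-\cos^2\theta_2=-\sin(\theta_1+\theta_2)\sin(\theta_1-\theta_2),
\qquad \theta_1+\theta_2=\alpha-\tfrac{\gamma}{2},\quad \theta_1-\theta_2=\beta-\tfrac{\gamma}{2},
\]
reduces everything to the single observation that both sine arguments lie strictly in $(0,\pi)$, which is exactly what the hypothesis $0\le\gamma<\min\{\alpha,\beta\}<\pi$ delivers. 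This buys you three things: no case split, no need to order $\alpha$ and $\beta$, and uniform coverage of the boundary situation $\alpha+\beta-\gamma=\pi$ (where $|c+ab|=0$), which falls between the paper's two cases. The paper's version is perhaps more immediately visual (comparing two cosines of angles in $[0,\pi]$), but yours is tighter; all the individual computations you rely on (the half-angle formula for $|e^{ix}+e^{iy}|$, the product-to-sum identity, and the sign bookkeeping) check out.
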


\begin{proof} 
For $ 0<\gamma<\beta<\alpha<\pi $,
let $ a=e^{i\alpha}, b=e^{i\beta}, c=e^{i\gamma} $.
Set $ \theta=\alpha-\beta $.

Then, $ |a+b|=2\cos\frac{\theta}2 $ holds.

\begin{enumerate}
  \item[{\bf Case 1.}] $ \alpha+\beta-\gamma <\pi $.
  
      Since $ \theta=\alpha-\beta<\alpha-\gamma<\alpha+\beta-\gamma\leq \pi $
      hold and the function $ f(t)=\cos t $ is monotonically decreasing in
      $ 0\leq t\leq \pi $,
      we have 
      $$
          |a+b|=2\cos \frac{\theta}2\geq 2\cos \frac{\alpha+\beta-\gamma}{2}
               = |c+ab|.
      $$
   
  \item[{\bf Case 2.}] $ \alpha+\beta-\gamma >\pi $.

      In this case, we have
      $ 0<2\pi-(\alpha+\beta-\gamma)<\pi $. 
      Therefore, the following holds
      $$
         |c+ab|=2\cos \frac{2\pi-(\alpha+\beta-\gamma)}{2}.
      $$
      Since $ 0<\gamma<\alpha<\pi $, it follows that
      $$
        2\pi> 2\alpha-\gamma =\alpha+\alpha-\gamma=\theta+\beta+\alpha-\gamma.
      $$
      Therefore, we have
      $$
         (\pi>)\ 2\pi-(\alpha+\beta-\gamma)>\theta \ (>0).
      $$
      Hence, 
      $$
          |c+ab|=2\cos\frac{2\pi-(\alpha+\beta-\gamma)}{2}
             <2\cos\frac{\theta}{2}=|a+b|
      $$
      holds.
\end{enumerate}
     In conclusion, $ |c+ab|<|a+b| $ holds in both cases.
\end{proof}

%\vspace{0.5cm} 
% \begin{figure}[H]
%    \centering
%    \includegraphics[angle=270,width=17cm]{fujimura20231209.pdf}
%    \caption{
% An inequality
%   }\label{fig0}
%  \end{figure}

%%%%%%%%%%%%%%%%%%%%%%%%%%%
%%%%%%%%%%%%%%%%%%%%%%%%%%%
%%%%%%%%%%%%%%%%%%%%%%%%%%%
  
 \section{Formulas for the points of intersection}
%%%%%%%%%%%%%%%%%%%%%%%%%%%
%%%%%%%%%%%%%%%%%%%%%%%%%%%
%%%%%%%%%%%%%%%%%%%%%%%%%%%

For the proof of the main results for given points $a,b \in \mathbb{H}^2$, we need 
explicit formulas for the points of intersection of hyperbolic lines. 
These  formulas will be given in the course of this work, starting in this section.

%The formula for the point u in Table 2 can be proved as follows.(Fujimura 2023-11-27,
%filename: vh1127.tex)

\begin{lem}\label{uformula} %filename: vh1127.tex/ Fujimura 2023-11-27
(1) For $ a,b\in  {\mathbb{H}^2} $,
$$ 
LIS[a,\overline{b},b,\overline{a}]= \frac{{\rm Im}(a b)}{{\rm Im}(a+b)}\,.
$$

(2) For $ a,b\in S^1(0,1) \cap {\mathbb{H}^2} $
with $ {\rm Re}\, (a-b)\neq0 $ and $ a b\neq 1 $, we have
\[
 LIS[a,\overline{b},b,\overline{a}]= \frac{1+a b}{a+b} \in (-1,1)\,.
\]

(3)  For $ a,b\in  {\mathbb{H}^2} $ with ${\rm Re}(a-b) \neq 0$,
\[
u_1\equiv LIS[0, 1, (a + b)/2, (a + b)/2 + i(b - a)]= 
\frac{|a|^2-|b|^2}{2\,{{\rm Re}(a-b)}}
\]
and $u_1=0$ for  $ a,b\in S^1(0,1) \cap {\mathbb{H}^2} .$ 
\end{lem}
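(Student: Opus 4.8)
All three identities are evaluations of the intersection formula \eqref{LIS}, so the plan is simply to substitute the relevant quadruple of points, expand, and cancel a common factor. Throughout I would use ${\rm Im}(z)=(z-\overline z)/(2i)$, ${\rm Re}(z)=(z+\overline z)/2$, and $\overline a\,\overline b-ab=\overline{ab}-ab=-2i\,{\rm Im}(ab)$ to repackage the polynomials in $a,\overline a,b,\overline b$ that come out of \eqref{LIS}.

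For (1) I substitute the ordered quadruple $a,\overline b,b,\overline a$ into \eqref{LIS}. The numerator then factors as $(\overline a\,\overline b-ab)\big[(b-\overline a)-(a-\overline b)\big]$, whose bracket equals $2\,{\rm Re}(b-a)$, while the denominator is the difference of squares $(a-\overline b)^2-(\overline a-b)^2$, which splits as $\big[2i\,{\rm Im}(a+b)\big]\big[2\,{\rm Re}(a-b)\big]$. The factor ${\rm Re}(a-b)$ is common to both---and is nonzero precisely when the two lines meet in a unique point---so it cancels, and $\overline a\,\overline b-ab=-2i\,{\rm Im}(ab)$ leaves the value ${\rm Im}(ab)/{\rm Im}(a+b)$.

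For (2) I specialize (1) to $|a|=|b|=1$, where $\overline a=1/a$ and $\overline b=1/b$. Substituting these into ${\rm Im}(ab)/{\rm Im}(a+b)$ and multiplying through by the common factor $2i\,ab$ turns numerator and denominator into $(ab)^2-1=(ab-1)(ab+1)$ and $(a+b)(ab-1)$; cancelling $ab-1$ (allowed since $ab\neq1$) gives $(1+ab)/(a+b)$. To locate this value in $(-1,1)$ I first note that it is real by (1); for the modulus bound I would write $a=e^{i\alpha}$, $b=e^{i\beta}$ with $\alpha,\beta\in(0,\pi)$, so that $(1+ab)/(a+b)=\cos\frac{\alpha+\beta}{2}\big/\cos\frac{\alpha-\beta}{2}$, and observe that $\min\{\tfrac{\alpha+\beta}{2},\,\pi-\tfrac{\alpha+\beta}{2}\}>\tfrac{|\alpha-\beta|}{2}$ forces the absolute value below $1$; alternatively this modulus bound is Proposition \ref{uform20231209} with $c=1$.

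For (3) the line $L[0,1]$ is the real axis, while the second line runs through $(a+b)/2$ in the direction $i(b-a)$ and is thus the perpendicular bisector of $[a,b]$; their meeting point is the center of the circle through $a,\overline a,b$, already computed to be $(|a|^2-|b|^2)/(2\,{\rm Re}(a-b))$ in \eqref{eq:centerab}. Carrying out the substitution $0,1,(a+b)/2,(a+b)/2+i(b-a)$ in \eqref{LIS} directly confirms this: the term $\overline p q-p\overline q$ vanishes because $p=0$, the factor $\overline r s-r\overline s$ collapses to $i(|b|^2-|a|^2)$, and the denominator to $-2i\,{\rm Re}(a-b)$, giving $u_1=(|a|^2-|b|^2)/(2\,{\rm Re}(a-b))$; on $S^1(0,1)$ the numerator vanishes and $u_1=0$. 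The steps are mechanical once the conjugate bookkeeping is organized, and I expect the only non-routine point to be the range assertion in (2), where the trigonometric inequality is the crux.
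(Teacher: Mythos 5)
Your proposal is correct and follows essentially the same route as the paper: part (1) is the same factor-and-cancel evaluation of \eqref{LIS}, part (2) is the same specialization via $\overline a=1/a$, $\overline b=1/b$ with the bound $|u|<1$ obtainable from Proposition \ref{uform20231209} with $c=1$ (your self-contained identity $(1+ab)/(a+b)=\cos\frac{\alpha+\beta}{2}\big/\cos\frac{\alpha-\beta}{2}$ is a valid alternative and arguably cleaner), and part (3) reaches the same value $(|a|^2-|b|^2)/(2\,{\rm Re}(a-b))$ by direct substitution where the paper instead cites Lemma \ref{epF} and the identity $u_1=(a_*+b_*)/2$. All computations check out; no gaps.
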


\begin{proof} (1) From \eqref{LIS}, we obtain
\begin{align*}
u &\equiv LIS[a,\overline{b},b,\overline{a}]\\
  &= \frac{(\overline{a} \overline{b}-a b)(b-\overline{a})
           -(a-\overline{b})(\overline{b} \overline{a} -b a)}
          {(\overline{a}-b)(b-\overline{a})-(a-\overline{b})(\overline{b}-a)}
   = \frac{(\overline{a} \overline{b}-a b)(b-\overline{a}-a+\overline{b})}
          {-(\overline{a}-b)^2+(a-\overline{b})^2}\\
  &= \frac{(\overline{a} \overline{b}-a b)(-a+\overline{b}-\overline{a}+b)}
          {(a-\overline{b}-\overline{a}+b)(a-\overline{b}+\overline{a}-b)}
   = \frac{a b-\overline{a} \overline{b}}{a+b-(\overline{a}+\overline{b})}=\frac{{\rm Im}(a b)}{{\rm Im}(a+b)}.
\end{align*}
(2) From $ a,b\in S^1(0,1) $, it follows that $ a \overline{a}=1 $ and 
$ b \overline{b}=1 $ . 
Therefore we have
%\bigskip
\begin{align*}
 u &= \frac{a b-\dfrac{1}{a b}}{a+b-\Big(\dfrac1a+\dfrac1b\Big)}
    = \frac{(a b)^2-1}{a b(a+b)-(a+b)}
    = \frac{(a b-1)(a b+1)}{(a b-1)(a+b)}
    = \frac{a b+1}{a+b}.
\end{align*}
\bigskip
\noindent
By setting $c=1$ in Proposition \ref{uform20231209}, we see that $|u|<1.$

(3) Follows from \eqref{eq:aster2} and Lemma \ref{epF} because 
$u_1=(a_{\ast}+ b_{\ast})/2.$

\end{proof}

\bigskip

% From vh1201.tex/ Fujimura

\begin{figure}
    \centering
    \begin{tikzpicture}[scale=0.7]
    \clip (-6,-1) rectangle (6.5,11.3);
    \draw (0,1.5) circle (1mm);
    \draw (2,3) circle (1mm);
    \draw (0.666,2.321) circle (1mm);
    \draw (2,3) -- (-2,0);
    \draw (-5.535,10.964) circle (1mm);
    \draw (-5.535,10.964) circle (10.964cm);
    \draw[dashed] (-5.535,10.964) -- (-5.535,0);
    \draw (0,0.75) parabola (-6,12.750);
    \draw (0,0.75) parabola (6.990,17.036);
    \draw (-5.535,10.964) -- (2.687,0);
    \draw (1.535,1.535) circle (1mm);
    \draw[dashed] (1.535,1.535) -- (1.535,0);
    \draw (2,1.5) parabola (-6,12.166);
    \draw (2,1.5) parabola (6.990,5.650);
    \draw (-6,0) -- (7,0);
    \draw (-5.535,0) circle (1mm);
    \draw (-2,0) circle (1mm);
    \draw (-0.390,0) circle (1mm);
    \draw (1.535,0) circle (1mm);
    \draw (2.687,0) circle (1mm);
    \draw (5.765,0) circle (1mm);
    \draw (-2,0) circle (3.535cm);
    \draw (1.535,1.535) circle (1.535cm);
    \draw (2.687,0) circle (3.077cm);
    \node[scale=1.3] at (-0.4,1.6) {$a$};
    \node[scale=1.3] at (2,3.5) {$b$};
    \draw[fill=white,color=white] (-0.1,2.90) -- (0.4,2.90) -- (0.4,2.45) -- (-0.1,2.45) -- (-0.1,2.90);
    \node[scale=1.3] at (0.2,2.65) {$m$};
    \node[scale=1.3] at (-5.1,11) {$q$};
    \node[scale=1.3] at (1.55,1.9) {$p$};
    \node[scale=1.3] at (-2,-0.4) {$c$};
    \node[scale=1.3] at (1.75,-0.4) {$d$};
    \node[scale=1.3] at (0.14,-0.4) {$a_*$};
    \node[scale=1.3] at (6.2,-0.5) {$b_*$};
    \node[scale=1.3] at (-4.3,-0.45) {$2c-d$};
    \end{tikzpicture}
    \caption{The points $p$ and $q$ are the centers of the two circles through $a$ and $b,$ tangent to the real axis at $d$ and $2c-d,$ resp. The points $p$ and $q$ are also the points of intersection of the two parabola with the real axis as the directrix and with foci $a$ and $b,$ resp. Because ${\rm Im}(p)<{\rm Im}(q)$, we see that
    $v_{\mathbb{H}^2}(a,b)=
    \max\{ \measuredangle(a,d,b),  \measuredangle(a,2c-d,b)\}= \measuredangle(a,d,b)\,.$
    The point $m$ here is the hyperbolic midpoint of $a$ and $b.$}
    \label{fig3}
\end{figure}
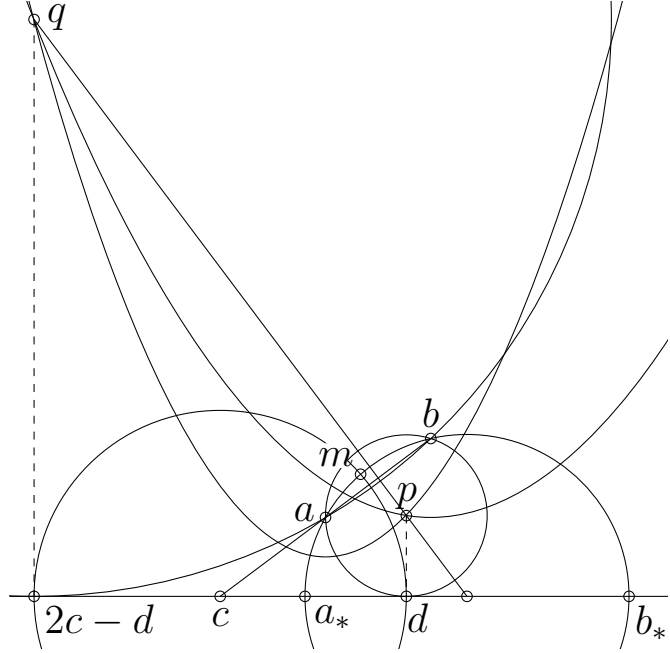

For $a,b\in\mathbb{H}^2 $ with  ${\rm Im}(a-b) \neq 0\, $, there are 
two circles through $a$ and $b$, tangent to the real axis. Let $d$ 
be the point of the smaller circle on the real axis. 

\bigskip

%\noindent{\bf Lemma}
\begin{lem}\label{dForm} % From vh1201.tex/ Fujimura
Let $ a,b\in\mathbb{H}^2 $ with ${\rm Im}(a-b) \neq 0\,. $ Then the above point
$d$ is given by
$$
    d=\frac{{\rm Im}\,( a\overline{b})
            +{\rm sgn}\big({\rm Re}\,(a-b)\big)|a-b|
            \sqrt{{{\rm Im}}\,(a){{\rm Im}}\,(b)}}
           {{\rm Im}\,(a-b)}.
$$
\end{lem}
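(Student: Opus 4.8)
The plan is to describe each circle through $a$ and $b$ that is tangent to the real axis by its point of tangency. Such a circle has its center directly above the tangency point, so writing that point as a real number $d$ and the radius as $r>0$, the center is $d+ir$. Setting $a=a_1+i a_2$ and $b=b_1+i b_2$ with $a_2,b_2>0$, the condition $|a-(d+ir)|^2=r^2$ simplifies (the $r^2$ cancels) to $r=((a_1-d)^2+a_2^2)/(2a_2)$, and likewise $r=((b_1-d)^2+b_2^2)/(2b_2)$ from the point $b$. First I would equate these two expressions for $r$ and clear denominators; the radius then drops out and, after recognizing $a_2 b_1-a_1 b_2={\rm Im}(a\overline{b})$, $a_2-b_2=-{\rm Im}(a-b)$, and $b_2|a|^2-a_2|b|^2$ in the coefficients, I obtain the quadratic
\[
({\rm Im}(b)-{\rm Im}(a))\,d^2+2\,{\rm Im}(a\overline{b})\,d+\big({\rm Im}(b)|a|^2-{\rm Im}(a)|b|^2\big)=0 .
\]
Its two roots are exactly the tangency points of the two tangent circles.

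Next I would compute the discriminant. A direct expansion of the three coefficient products, followed by collecting terms, produces
\[
\Delta=4\,{\rm Im}(a\overline{b})^2-4\big({\rm Im}(b)-{\rm Im}(a)\big)\big({\rm Im}(b)|a|^2-{\rm Im}(a)|b|^2\big)=4\,{\rm Im}(a)\,{\rm Im}(b)\,|a-b|^2 ,
\]
the key simplification being that everything collapses to $a_2 b_2\big[(a_1-b_1)^2+(a_2-b_2)^2\big]$. Since $a_2,b_2>0$, the discriminant is nonnegative and $\sqrt{\Delta}=2|a-b|\sqrt{{\rm Im}(a){\rm Im}(b)}$, so both roots are real and, after multiplying numerator and denominator by $-1$,
\[
d_{\pm}=\frac{{\rm Im}(a\overline{b})\pm|a-b|\sqrt{{\rm Im}(a){\rm Im}(b)}}{{\rm Im}(a-b)} .
\]
This $\Delta$-computation is mechanical but a little long; I would verify it by hand or with the computer algebra system used elsewhere in the paper.

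It remains to decide which of $d_+,d_-$ is the tangency point of the \emph{smaller} circle, and to match the choice with ${\rm sgn}({\rm Re}(a-b))$. For this I would substitute the roots back into $r=((a_1-d)^2+a_2^2)/(2a_2)$, which is increasing in $(a_1-d)^2$, and compare. Using $a_1\big({\rm Im}(a)-{\rm Im}(b)\big)-{\rm Im}(a\overline{b})={\rm Im}(a)(a_1-b_1)$ one gets $a_1-d_{\pm}=\big({\rm Im}(a)(a_1-b_1)\mp|a-b|\sqrt{{\rm Im}(a){\rm Im}(b)}\big)/{\rm Im}(a-b)$, and the difference of squares telescopes to a single cross term:
\[
(a_1-d_+)^2-(a_1-d_-)^2=\frac{-4\,{\rm Im}(a)\,|a-b|\sqrt{{\rm Im}(a){\rm Im}(b)}\,\,{\rm Re}(a-b)}{\big({\rm Im}(a-b)\big)^2} .
\]
Its sign is that of $-{\rm Re}(a-b)$, so the smaller radius occurs at $d_+$ when ${\rm Re}(a-b)>0$ and at $d_-$ when ${\rm Re}(a-b)<0$; in both cases the selected root is the one carrying $+{\rm sgn}({\rm Re}(a-b))$ in front of $|a-b|\sqrt{{\rm Im}(a){\rm Im}(b)}$, which is precisely the asserted formula.

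I expect the only genuine subtlety to be this last sign selection rather than the algebra: the two coefficient simplifications are routine, whereas pinning down which circle is the smaller one is where the geometry enters. As a consistency check I would note the boundary case ${\rm Re}(a)={\rm Re}(b)$, where the two circles are congruent, the cross term above vanishes, and the signed formula collapses to $d={\rm Im}(a\overline{b})/{\rm Im}(a-b)={\rm Re}(a)$, the common real part about which the two tangency points are symmetric.
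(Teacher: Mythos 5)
Your proof is correct, and it reaches the paper's quadratic for $d$ by a different and arguably more transparent route. The paper writes down the circle $C(a,b,d)$ through $a$, $b$ and a real point $d$ via the circumcenter formula, intersects it with the real axis, and imposes that the resulting quadratic in $z$ has a double root at $d$; setting that discriminant to zero produces the same quadratic in $d$ that you obtain directly by placing the center at $d+ir$ and equating the two expressions $r=((a_1-d)^2+a_2^2)/(2a_2)=((b_1-d)^2+b_2^2)/(2b_2)$. Your discriminant computation $\Delta=4\,{\rm Im}(a)\,{\rm Im}(b)\,|a-b|^2$ and the resulting roots agree with the paper's $d^{\pm}$. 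The more substantive difference is the root selection: the paper disposes of this by a four-way case analysis on the signs of ${\rm Im}(a-b)$ and ${\rm Re}(a-b)$, asserting in each case whether $d$ is the larger or smaller root, whereas you compare the two radii directly through $(a_1-d_{\pm})^2$ and show the difference has the sign of $-{\rm Re}(a-b)$. This actually supplies the justification that the paper leaves implicit (namely, \emph{why} the max or min root corresponds to the smaller tangent circle), so your argument is self-contained where the paper's is terse. One small caveat on your closing consistency check: when ${\rm Re}(a)={\rm Re}(b)$ the formula returns the common real part, which is the foot of the vertical line through $a$ and $b$ and not a tangency point of either (congruent) circle; this degenerate case is excluded in practice (the paper elsewhere assumes ${\rm Re}(a-b)\neq 0$), so it does not affect the validity of your proof of the generic statement.
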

\bigskip

\noindent
%{\it Proof.}       
\begin{proof}

Let $c_0$ be the center of the circle $C(a,b,d)$, that is
$$
  c_0=m(a,b,d)
        =\frac{a\overline{a}(b-d)+b\overline{b}(d-a)+d^2(a-b)}
           {a(d-\overline{b})+b(\overline{a}-d)
            +d(\overline{b}-\overline{a})}
$$
by \eqref{mfun}. The points $z$ of $ C(a,b,d) $ satisfy the following equation
$$
   |z-c_0|=|d-c_0|\,.
$$
This equation is equivalent to
\begin{align*}
& \big(a(d-\overline{b})+b(\overline{a}-d)
   +d(\overline{b}-\overline{a})\big)z\overline{z}
  +\big(a\overline{a}(\overline{b}-d)+b\overline{b}(d-\overline{a})
      +d^2(\overline{a}-\overline{b})\big)z\\
& \quad
  -\big(a\overline{a}(b-d)+b\overline{b}(d-a)+d^2(a-b)\big)\overline{z}
  -d\big((\overline{a}b-\overline{b}a)d+ab(\overline{b}-\overline{a})
     +\overline{a}\overline{b}(a-b)\big)=0.
\end{align*}
Since this equation has the double root $ z=d $,
the discriminant of the following quadratic equation is zero.
That is,
\begin{align*}
& \big(a(d-\overline{b})+b(\overline{a}-d)
   +d(\overline{b}-\overline{a})\big)z^2
  +\big(a\overline{a}(\overline{b}-b)+b\overline{b}(a-\overline{a})
      +d^2(\overline{a}-\overline{b}-a+b)\big)z\\
& \quad
  -d\big((\overline{a}b-\overline{b}a)d+ab(\overline{b}-\overline{a})
     +\overline{a}\overline{b}(a-b)\big)=0
\end{align*}
and
\begin{equation}\label{eq:discrD}
 Discr=\big((a-b-\overline{a}+\overline{b})d^2
      -2(a\overline{b}-\overline{a}b)d
        -ab(\overline{a}-\overline{b})+\overline{a}\overline{b}(a-b)\big)^2=0.
\end{equation}
Equation \eqref{eq:discrD} has two solutions
$$
  d^+=\frac{{\rm Im}\,(a\overline{b})+|a-b|\sqrt{{\rm Im}\,(a){\rm Im}\,(b)}}
           {{\rm Im}\,(a-b)},\quad
  d^-=\frac{{\rm Im}\,(a\overline{b})-|a-b|\sqrt{{\rm Im}\,(a){\rm Im}\,(b)}}
           {{\rm Im}\,(a-b)}.
$$
To find out which one of these two solutions gives $ d $, consider 
the following four cases:
\begin{itemize}
  \item  The case 
     $ {\rm Im}\,(a)>{\rm Im}\,(b) $ and $ {\rm Re}\,(a)>{\rm Re}\,(b)$,
     then $ d=\max\{d^+,d^-\}=d^+$.

  \item  The case 
     $ {\rm Im}\,(a)>{\rm Im}\,(b) $ and $ {\rm Re}\,(a)<{\rm Re}\,(b)$,
     then $ d=\min\{d^+,d^-\}=d^-$.

  \item  The case 
     $ {\rm Im}\,(a)<{\rm Im}\,(b) $ and $ {\rm Re}\,(a)>{\rm Re}\,(b)$,
     then $ d=\min\{d^+,d^-\}=d^+$.

  \item  The case 
     $ {\rm Im}\,(a)<{\rm Im}\,(b) $ and $ {\rm Re}\,(a)<{\rm Re}\,(b)$,
     then $ d=\max\{d^+,d^-\}=d^-$.
\end{itemize}
Hence, the sign before the square root of the solution 
is determined by the sign of the real part of $ a-b$,
and the assertion is obtained.
\end{proof}
%\hfill $ \square $

\begin{nonsec}{\bf Remark.} 
Theorem 5.6(2) from \cite{frv} also gives the formula of Lemma \ref{dForm} but we prefer
the above more direct proof.

\begin{lem} \label{myFD2}  For $a,b\in S^1(0,1)\cap \mathbb{H}^2,$ the following 
formula  holds: %(updated 2023-12-23)
$$ d=\frac{1}{1+ab}\Big(
        a+b-2 {\rm sgn}\big( {\rm Re}(a+b)\big)
            \sqrt{ab {\rm Im}(a)\cdot {\rm Im}(b)}
           \Big).$$
\end{lem}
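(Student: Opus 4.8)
The plan is to specialize the general formula of Lemma \ref{dForm} to the unit circle, where $|a|=|b|=1$ gives $\overline a=1/a$ and $\overline b=1/b$. Starting from
\[
d=\frac{{\rm Im}\,(a\overline b)+{\rm sgn}\big({\rm Re}\,(a-b)\big)\,|a-b|\,\sqrt{{\rm Im}\,(a){\rm Im}\,(b)}}{{\rm Im}\,(a-b)},
\]
I would first treat the rational ``leading term'' ${\rm Im}\,(a\overline b)/{\rm Im}\,(a-b)$ and then the square-root term separately, since the former is free of sign ambiguities while the latter is not. Throughout I may assume, after possibly interchanging $a$ and $b$, that ${\rm Re}\,(a)<{\rm Re}\,(b)$: both the statement of Lemma \ref{dForm} and the claimed identity are invariant under $a\leftrightarrow b$ (the numerator and denominator each change sign), and $d$ is the intrinsically defined tangency point of the smaller circle, so no generality is lost.

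For the leading term, substituting $\overline a=1/a$ and $\overline b=1/b$ gives ${\rm Im}\,(a\overline b)=(a\overline b-\overline a b)/(2i)=(a^2-b^2)/(2iab)$ and ${\rm Im}\,(a-b)=\big((a-b)-(\overline a-\overline b)\big)/(2i)=(a-b)(1+ab)/(2iab)$. Dividing, the factors $2iab$ and $a-b$ cancel and I obtain
\[
\frac{{\rm Im}\,(a\overline b)}{{\rm Im}\,(a-b)}=\frac{a^2-b^2}{(a-b)(1+ab)}=\frac{a+b}{1+ab},
\]
which is exactly the first summand of the claimed expression; in passing this recovers the reality of the term, since by Lemma \ref{uformula}(2) the quantity $(1+ab)/(a+b)$ is real.

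The remaining task is to identify the square-root terms, i.e. to show
\[
\frac{{\rm sgn}\big({\rm Re}\,(a-b)\big)\,|a-b|\,\sqrt{{\rm Im}\,(a){\rm Im}\,(b)}}{{\rm Im}\,(a-b)}=\frac{-2\,{\rm sgn}\big({\rm Re}\,(a+b)\big)\,\sqrt{ab\,{\rm Im}\,(a){\rm Im}\,(b)}}{1+ab}.
\]
Both sides are real, so it suffices to compare them as real numbers, and here I would pass to the parametrization $a=e^{i\alpha}$, $b=e^{i\beta}$ with $0<\beta<\alpha<\pi$. Using the half-angle identities $|a-b|=2\sin\frac{\alpha-\beta}{2}$, ${\rm Im}\,(a-b)=2\cos\frac{\alpha+\beta}{2}\sin\frac{\alpha-\beta}{2}$, ${\rm sgn}\big({\rm Re}\,(a-b)\big)=-1$, and $\sqrt{{\rm Im}\,(a){\rm Im}\,(b)}=\sqrt{\sin\alpha\sin\beta}$, the left-hand side collapses to $-\sqrt{\sin\alpha\sin\beta}/\cos\frac{\alpha+\beta}{2}$.

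The delicate point, and the one I expect to be the main obstacle, is the branch of $\sqrt{ab\,{\rm Im}\,(a){\rm Im}\,(b)}=\sqrt{\sin\alpha\sin\beta\,e^{i(\alpha+\beta)}}$ on the right-hand side: for the principal square root this equals $\sqrt{\sin\alpha\sin\beta}\,e^{i(\alpha+\beta)/2}$ when $\alpha+\beta<\pi$, but $-\sqrt{\sin\alpha\sin\beta}\,e^{i(\alpha+\beta)/2}$ when $\alpha+\beta>\pi$, because the principal argument jumps by $2\pi$ across $\alpha+\beta=\pi$. Crucially, ${\rm sgn}\big({\rm Re}\,(a+b)\big)={\rm sgn}\big(\cos\frac{\alpha+\beta}{2}\big)$ switches sign at the very same value $\alpha+\beta=\pi$, so the two sign changes cancel. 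Carrying out the division by $1+ab=2e^{i(\alpha+\beta)/2}\cos\frac{\alpha+\beta}{2}$ then yields $-\sqrt{\sin\alpha\sin\beta}/\cos\frac{\alpha+\beta}{2}$ in both cases, matching the left-hand side. Combining this with the leading term completes the identification of $d$ with the claimed formula.
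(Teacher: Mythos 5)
Your proof is correct, but it takes a genuinely different route from the paper's. The paper never touches Lemma \ref{dForm} here: it starts from the identity $d=u/(1+\sqrt{1-|u|^2})$ with $u=(1+ab)/(a+b)$ — i.e., from the fact, via equation \eqref{hmid2} and Remark \ref{dishm}, that the tangency point $d$ is the hyperbolic midpoint of $0$ and $u$ in $\mathbb{B}^2$ — and then expands $\sqrt{1-u^2}$ as $\sqrt{-(a^2-1)(b^2-1)}/\sqrt{(a+b)^2}$, absorbing the entire branch ambiguity into the single factor $(a+b)/\sqrt{(a+b)^2}={\rm sgn}({\rm Re}(a+b))$. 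You instead specialize the general half-plane formula of Lemma \ref{dForm} to $|a|=|b|=1$ and match the two summands separately; your computation of the leading term $(a+b)/(1+ab)$, the reduction of the square-root term to $-\sqrt{\sin\alpha\sin\beta}/\cos\frac{\alpha+\beta}{2}$, and the two-case analysis showing that the jump of the principal branch of $\sqrt{ab\,{\rm Im}(a){\rm Im}(b)}$ across $\alpha+\beta=\pi$ is exactly cancelled by the sign change of ${\rm sgn}({\rm Re}(a+b))$ all check out, as does the symmetry argument behind the normalization $0<\beta<\alpha<\pi$. What your approach buys: it verifies directly that the general entry for $d$ in Table \ref{t1} and the unit-circle entry in Table \ref{t2} are consistent, and it is more explicit than the paper about which branch of the square root is intended. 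What the paper's approach buys: it exhibits $d$ as the hyperbolic midpoint of $0$ and $u$, which is the geometric fact reused in the four-step proof of Theorem \ref{my1}. One minor caveat: your route requires ${\rm Im}(a)\neq{\rm Im}(b)$ so that Lemma \ref{dForm} applies, but on the upper unit circle this fails only when $ab=-1$, precisely where the asserted formula is undefined anyway, so no generality is lost.
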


\noindent
%{\it Proof.}
\begin{proof}
Assume that $ a,b\in S^1(0,1) $.  
By \eqref{hmid2}, $ d=u/(1+\sqrt{1-|u|^2}) $. By substituting $ u=\dfrac{1+ab}{a+b} $ as in Lemma \ref{uformula} 
into $ d=u/(1+\sqrt{1-|u|^2}) $, we have
\begin{align*}
 d&=\dfrac{u}{1+\sqrt{1-|u|^2}}=\dfrac{1-\sqrt{1-|u|^2}}{u}\\
  &=\dfrac{a+b}{1+ab}\bigg(1-\sqrt{1-\Big(\frac{1+ab}{a+b}\Big)^2}\bigg)
   =\dfrac{a+b}{1+ab}\bigg(1-\dfrac{1}{\sqrt{(a+b)^2}}
               \sqrt{-(a^2-1)(b^2-1)}\bigg)\\
  &=\dfrac{a+b}{1+ab}\bigg(1
         -\dfrac{1}{\sqrt{(a+b)^2}}
                \sqrt{-ab\Big(a-\frac1a\Big)\Big(b-\frac1b\Big)}\bigg)\\
  &=\dfrac{a+b}{1+ab}\bigg(1-\dfrac{2}{\sqrt{(a+b)^2}}
             \sqrt{ab\, {\rm Im}(a)\cdot {\rm Im}(b)}\bigg)\,.
\end{align*}
Here, set $ a+b=re^{i\theta} $.
Then, we have
$$
   \dfrac{a+b}{\sqrt{(a+b)^2}}
    =\begin{dcases}
      1  &  {\rm if} \ 0\leq\theta\leq \frac{\pi}2,\\
      -1 &  {\rm if} \ \theta> \frac{\pi}2 \,.
   \end{dcases}
$$
The above is equivalent to
$$
    {\rm sgn}\big( {\rm Re}(a+b)\big)
    =\begin{dcases}
      1  &  {\rm if} \ 0\leq\theta\leq \frac{\pi}2,\\
      -1 &  {\rm if} \ \theta> \frac{\pi}2\, .
   \end{dcases}
$$
%\hfill $\square$
\end{proof}
\bigskip

\begin{rem} \label{dishm}
In the case of $ a,b\in S^1(0,1) $, if $ a\neq b $,
the equation \eqref{eq:discrD} can be written as follows
$$
  (1+ab)d^2-2(a+b)d+(1+ab)=0.
$$ 
So, we have
$$
   d^2-2\dfrac{a+b}{1+ab}d+1 =d^2-\frac{2}{u}d+1  =0,
$$
and
$$
   d=\frac{1\pm\sqrt{1-u^2}}{u}.
$$
In conclusion, we see by \eqref{hmid2} that $d$ is the hyperbolic 
midpoint of $0$ and $u$ in the hyperbolic geometry of $\mathbb{B}^2.$ 
\end{rem}
%\newpage

\end{nonsec}

\begin{table}[ht]
    \centering
    \begin{tabular}{|l|l|l|}
       \hline
       Point & Definition & Formula\\
       \hline
       \multirow{3}{1em}{$a_*$} & \multirow{3}{18em}{$C[a,\overline{a},b]\cap\mathbb{R}^1$ (Points in order $a_*,a,b,b_*$ on the circle $C[a,\overline{a},b]$)} & \multirow{3}{18em}{$\dfrac{|a|^2-|b|^2+|a-b||a-\overline{b}|}{2 \,{\rm Re}(a-b)}$}\\
       &&\\
       &&\\
       \hline
       \multirow{3}{1em}{$b_*$}  & \multirow{3}{18em}{$C[a,\overline{a},b]\cap\mathbb{R}^1$ (Points in order $a_*,a,b,b_*$ on the circle $C[a,\overline{a},b]$)} & \multirow{3}{18em}{$\dfrac{|a|^2-|b|^2-|a-b||a-\overline{b}|}{2 \,{\rm Re}(a-b)}$}\\
       &&\\
       &&\\
       \hline
       \multirow{3}{1em}{$c$} & \multirow{3}{18em}{$LIS[a,b,0,1]$} & \multirow{3}{18em}{$\dfrac{{\rm Im}(a\overline{b})}{{\rm Im}(a-b)}$}\\
       &&\\
       &&\\
       \hline
       \multirow{3}{1em}{$d$}  & \multirow{3}{18em}{$d\in\mathbb{R}^1$ such that $\measuredangle(a,d,b)=\sup\{\measuredangle(a,z,b)\text{ : }z\in\mathbb{R}^1\}$} & \multirow{3}{19.5em}{$\dfrac{{\rm Im}(a\overline{b})+{\rm sgn}({\rm Re}(a-b))|a-b|\sqrt{{\rm Im}(a){\rm Im}(b)}}{{\rm Im}(a-b)}$}\\
       &&\\
       &&\\
       \hline
       \multirow{3}{1em}{$m$}  & \multirow{3}{18em}{Hyp. midpoint of $a,b$} & \multirow{3}{18em}{$\dfrac{{\rm Im}(ab)+i|a-\overline{b}|\sqrt{{\rm Im}(a){\rm Im}(b)}}{{\rm Im}(a+b)}$}\\
       &&\\
       &&\\
       \hline
       \multirow{3}{1em}{$p$} & \multirow{3}{18em}{$LIS[\dfrac{a+b}{2},\dfrac{a+b}{2}+i(b-a),d,d+i]$} & \multirow{3}{18em}{$\dfrac{|a|^2-|b|^2-2(a-b)d}{2{\rm Im}(a-b)}i$}\\
       &&\\
       &&\\
       \hline
       \multirow{3}{1em}{$q$} & \multirow{3}{18em}{$LIS[\dfrac{a+b}{2},\dfrac{a+b}{2}+i(b-a),2c-d,2c-d+i]$} & \multirow{3}{18em}{$\dfrac{|a|^2-|b|^2-2(a-b)f}{2{\rm Im}(a-b)}i$}\\
       &&\\
       &&\\
       \hline
       \multirow{3}{1em}{$u$}  & \multirow{3}{18em}{$LIS[a,\overline{b},b,\overline{a}]$} & \multirow{3}{18em}{$\dfrac{{\rm Im}(ab)}{{\rm Im}(a+b)}$}\\
       &&\\
       &&\\
       \hline
       \multirow{3}{1em}{$u_1$}  &  \multirow{3}{18em}{$LIS[0,1,\dfrac{a+b}{2},\dfrac{a+b}{2}+i(b-a)]$} & \multirow{3}{18em}{$\dfrac{|a|^2-|b|^2}{2\,{\rm Re}(a-b)}$}\\
       &&\\
       &&\\
       \hline
    \end{tabular}
    \vspace{2mm}
    \caption{Definitions and formulas of the points in Figures \ref{fig3} and \ref{fig4} for $a,b\in\mathbb{H}^2$ in the general case. Here, $f=\dfrac{{\rm Im}(a\overline{b})-{\rm sgn}({\rm Re}(a-b))|a-b|\sqrt{{\rm Im}(a){\rm Im}(b)}}{{\rm Im}(a-b)}$.}
    \label{t1}
\end{table}

\begin{table}[ht]
    \centering
    \begin{tabular}{|l|l|}
       \hline
       Point & Formula\\[1mm]
       \hline
       $a_*$ & ${\rm sgn}({\rm Re}(a-b))$\\[1mm]
       \hline
       $b_*$ & $-{\rm sgn}({\rm Re}(a-b))$\\[1mm]
       \hline
       \multirow{2}{1em}{$c$} & \multirow{2}{15em}{$\dfrac{a+b}{ab+1}$}\\
       &\\
       \hline
       \multirow{3}{1em}{$d$} & \multirow{3}{18em}{$\dfrac{a+b-2{\rm sgn}({\rm Re}(a+b))\sqrt{ab{\rm Im}(a)\cdot{\rm Im}(b)}}{1+ab}$}\\
       &\\
       &\\
       \hline
       \multirow{2}{1em}{$p$} & \multirow{2}{15em}{$\dfrac{2ab}{ab+1}d$}\\
       &\\
       \hline
       \multirow{2}{1em}{$q$} & \multirow{2}{15em}{$\dfrac{2ab}{ab+1}g$}\\
       &\\
       \hline
       \multirow{2}{1em}{$u$} & \multirow{2}{15em}{$\dfrac{1+ab}{a+b}$}\\
       &\\
       \hline
       $u_1$ & 0\\
       \hline
    \end{tabular}
    \vspace{2mm}
    \caption{Formulas of the points in Table \ref{t1} for $a,b\in\mathbb{H}^2$ when $|a|=|b|=1$, $ab\neq1$, $a\neq b$. Here, $g=\dfrac{a+b+2{\rm sgn}({\rm Re}(a+b))\sqrt{ab{\rm Im}(a)\cdot{\rm Im}(b)}}{1+ab}$.}
    \label{t2}
\end{table}  

\bigskip
%\begin{nonsec}{\bf Remark.} \label{littlermk} 
%In view of the  page "Inversion in the upper half plane" (2023-09-04), it should be observed that
%\[
%w(x,y)= \frac{|x-y|}{|x|+|y|}, \quad x, y \in \mathbb{H}^2,
%\]
%seems to be invariant under the inversion $z \mapsto z/|z|^2$ mapping $ \mathbb{H}^2$ onto itself.
%\end{nonsec}

\begin{figure}
    \centering
    \begin{tikzpicture}
    \draw (0,1.5) circle (0.7mm);
    \draw (2,3) circle (0.7mm);
    \draw (0.666,2.321) circle (0.7mm);
    \draw (0,-1.5) circle (0.7mm);
    \draw (2,-3) circle (0.7mm);
    \draw (0,1.5) -- (2,-3);
    \draw (2,3) -- (0,-1.5);
    \draw (-0.390,0) -- (5.765,0);
    \draw (-0.390,0) circle (0.7mm);
    \draw (0.666,0) circle (0.7mm);
    \draw (1.535,0) circle (0.7mm);
    \draw (2.687,0) circle (0.7mm);
    \draw (5.765,0) circle (0.7mm);
    \draw (1.535,1.535) circle (1.535cm);
    \draw (2.687,0) circle (3.077cm);
    \draw[dashed] (0.666,2.321) arc (41.0:-41.0:3.535);
    \draw (0,1.5) arc (60.8:-12.9:4.103);
    \draw (2,3) arc (12.9:-60.8:4.103);
    \node[scale=1.3] at (-0.3,1.7) {$a$};
    \node[scale=1.3] at (2,3.4) {$b$};
    \node[scale=1.3] at (1.1,2.35) {$m$};
    \node[scale=1.3] at (-0.7,0.3) {$a_*$};
    \node[scale=1.3] at (0.3,-0.2) {$u$};
    \node[scale=1.3] at (1.85,0.3) {$d$};
    \node[scale=1.3] at (5.5,0.4) {$b_*$};
    \node[scale=1.3] at (3,-0.21) {$u_1$};
    \end{tikzpicture}
    \caption{Here $u=LIS[a, \overline{b}, b, \overline{a}]$ and the dashed circular arc is a subarc of the circle centered at the point $c=LIS[a,b,0,1],$ orthogonal to the circle through $a,\overline{a},$ and $b.$  Moreover, $v_{\mathbb{H}^2}(a,b)= \measuredangle(a,d,b).$}
    \label{fig4}
\end{figure}
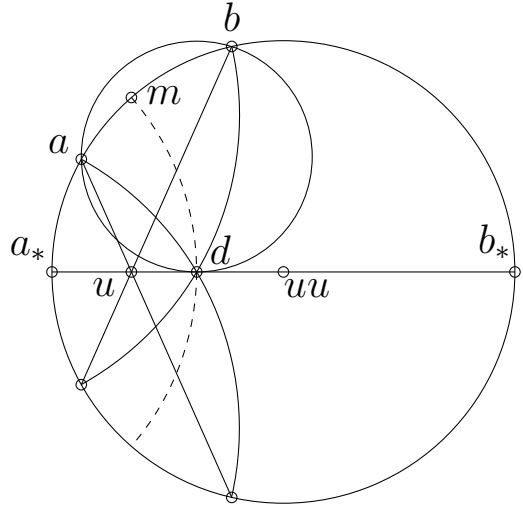
  
\begin{rem}\label{vHlow} Recall that, by Lemma \ref{uformula}(2), 
$u=(1+ a b)/(a+b)$ for $a,b \in \mathbb{H}^2 \cap S^1(0,1).$ 
 It is clear by Figure \ref{fig4} that
 $$v_{\mathbb{H}^2}(a,b)= \measuredangle(a,d,b) 
 \ge \measuredangle(a,u,b) =\left|{\rm arg}\left(\frac{a-u}{b-u}\right)\right| = 
 \left|{\rm arg}\left(\frac{a^2-1}{b^2-1}\right)\right|.$$ 
\end{rem}

\bigskip

\begin{lem} \label{hypMid24} %{\tt filename: vh0206-orth.tex}
(1) The hyperbolic midpoint $m$ of $ a,b\in\mathbb{H}^2$ is
$$
   m=\frac{{\rm Im}(ab)+i|a-\overline{b}|\sqrt{{\rm Im}(a){\rm Im}(b)}}
          {{\rm Im}(a+b)}.
$$
(2) $ LIS[a,\overline{b},\overline{a},b] = \frac{{\rm Im}(a b)}{{\rm Im}(a+b)}= {\rm Re}(m)\,.$
\end{lem}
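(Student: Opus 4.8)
The plan is to treat the two parts separately, with part (2) reducing almost immediately to earlier results and part (1) carrying the computational weight.

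For part (2), I would first observe that $L[b,\overline a]$ and $L[\overline a,b]$ are literally the same line, so that
\[
LIS[a,\overline b,\overline a,b]=LIS[a,\overline b,b,\overline a]=\frac{{\rm Im}(ab)}{{\rm Im}(a+b)}
\]
by Lemma \ref{uformula}(1). The remaining equality ${\rm Im}(ab)/{\rm Im}(a+b)={\rm Re}(m)$ is then read off from the formula in part (1): since $a,b\in\mathbb{H}^2$ forces ${\rm Im}(a){\rm Im}(b)>0$, the term $i|a-\overline b|\sqrt{{\rm Im}(a){\rm Im}(b)}$ is purely imaginary, so the real part of $m$ is exactly ${\rm Im}(ab)/{\rm Im}(a+b)$.

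For part (1), I would realize the hyperbolic midpoint directly on the geodesic through $a$ and $b$. Assuming first ${\rm Re}(a)\neq{\rm Re}(b)$, this geodesic is the semicircle centered at the real point $c_0=\tfrac{|a|^2-|b|^2}{2{\rm Re}(a-b)}$ (cf. \eqref{eq:centerab}) with radius $R=|a-c_0|$, whose endpoints $c_0\mp R$ are the points $a_*,b_*$ of Lemma \ref{epF}. Parametrizing it as $z(\theta)=c_0+Re^{i\theta}$, the hyperbolic line element $|dz|/{\rm Im}(z)=d\theta/\sin\theta$ integrates to $\log\tan(\theta/2)$, so along the geodesic the hyperbolic distance between the points at angles $\theta$ and $\theta'$ is $|\log\tan(\theta/2)-\log\tan(\theta'/2)|$. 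Writing $a=z(\sigma)$, $b=z(\tau)$, the midpoint $m=z(\mu)$ is therefore characterized by $\tan(\mu/2)=\sqrt{\tan(\sigma/2)\tan(\tau/2)}$. Using $\tan(\sigma/2)=(c_0+R-{\rm Re}(a))/{\rm Im}(a)$ and its analogue for $\tau$, I would set $s^2=\tan(\sigma/2)\tan(\tau/2)$ and recover $m$ via the half-angle formulas ${\rm Re}(m)=c_0+R\tfrac{1-s^2}{1+s^2}$ and ${\rm Im}(m)=R\tfrac{2s}{1+s^2}$.

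The main obstacle is the algebraic simplification of these two expressions back into the closed form claimed: both $R$ and $c_0+R$ involve the square root $|a-c_0|$, and the factor $\sqrt{{\rm Im}(a){\rm Im}(b)}$ in the answer must emerge from $s=\sqrt{\tan(\sigma/2)\tan(\tau/2)}$. This is exactly the kind of manipulation the paper delegates to computer algebra, and I would do the same, confirming the numerator ${\rm Im}(ab)$ for the real part and $|a-\overline b|\sqrt{{\rm Im}(a){\rm Im}(b)}$ for the imaginary part. The degenerate vertical case ${\rm Re}(a)={\rm Re}(b)$ I would dispatch separately: there the geodesic is a vertical line, the midpoint of $x+i\alpha$ and $x+i\beta$ is $x+i\sqrt{\alpha\beta}$, and direct substitution shows this agrees with the stated formula (so continuity also covers it). As a cross-check that avoids the derivation, one may instead simply verify the claimed $m$ by checking $|m-c_0|=R$ (so $m$ lies on the geodesic) together with $\tfrac{|a-m|}{|a-\overline m|}=\tfrac{|b-m|}{|b-\overline m|}$ via \eqref{rhoH} (so $\rho_{\mathbb{H}^2}(a,m)=\rho_{\mathbb{H}^2}(m,b)$); since $\rho_{\mathbb{H}^2}(a,\cdot)-\rho_{\mathbb{H}^2}(b,\cdot)$ is strictly monotone along the geodesic, these two facts pin the midpoint down uniquely.
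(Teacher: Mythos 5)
Your argument is correct, and for part (1) it takes a genuinely different route from the paper. The paper characterizes the midpoint implicitly: starting from $\rho_{\mathbb{H}^2}(a,m)=\rho_{\mathbb{H}^2}(b,m)$ it derives, via the cross-ratio formula of Lemma \ref{quadr}, a circle equation \eqref{eq:m-on-circle} that $m$ must satisfy, intersects it with the geodesic circle through $a,b,\overline{a}$ in \eqref{eq:m-on-C}, and eliminates $\overline{z}$ by a resultant computation to land on a quadratic whose root in $\mathbb{H}^2$ is the stated $m$. You instead construct $m$ explicitly by parametrizing the geodesic semicircle as $z(\theta)=c_0+Re^{i\theta}$ and integrating the hyperbolic line element to $\log\tan(\theta/2)$, so that the midpoint condition becomes the geometric-mean relation $\tan(\mu/2)=\sqrt{\tan(\sigma/2)\tan(\tau/2)}$; this is more transparent geometrically and makes the uniqueness of the midpoint on the geodesic automatic (the paper's two-circle intersection in principle has two roots and needs the $m\in\mathbb{H}^2$ selection), though both approaches ultimately delegate the same kind of closing algebra to a computer algebra system, and your verification variant (check $|m-c_0|=R$ and $|a-m|/|a-\overline{m}|=|b-m|/|b-\overline{m}|$, then invoke monotonicity along the geodesic) is a clean way to certify the answer without rederiving it. For part (2) your reduction is a mild shortcut the paper does not take: you observe that $L[\overline{a},b]=L[b,\overline{a}]$, so the point coincides with $LIS[a,\overline{b},b,\overline{a}]$ already computed in Lemma \ref{uformula}(1), whereas the paper redoes the $LIS$ computation from \eqref{LIS}; both are fine, and your reading of ${\rm Re}(m)$ off the formula in part (1) is immediate since the second term of the numerator is purely imaginary and ${\rm Im}(a+b)$ is real.
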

\bigskip
   
\noindent%{\it Proof.}
\begin{proof}
(1) By Proposition \ref{rhoabsr}  and Lemma \ref{quadr}, the point
$ m $  satisfies
$$
   \rho_{\mathbb{H}^2}(a,m)=\frac{|a-\overline{m}|+|a-m|}{|a-\overline{m}|-|a-m|}
   =\frac{|b-\overline{m}|+|b-m|}{|b-\overline{m}|-|b-m|}=\rho_{\mathbb{H}^2}(b,m).
$$
Therefore, we have
\begin{align*}
    (|a-\overline{m}|+|a-m|)(|b-\overline{m}|-|b-m|)
          &= (|a-\overline{m}|-|a-m|)(|b-\overline{m}|+|b-m|)\\
    |a-m||b-\overline{m}|
          &= |a-\overline{m}||b-m|,
\end{align*}
and
\begin{align*}
   & |a-m|^2|b-\overline{m}|^2- |a-\overline{m}|^2|b-m|^2\\
   & = (a-m)(\overline{a}-\overline{m})(b-\overline{m})(\overline{b}-m)
     - (a-\overline{m})(\overline{a}-{m})(b-{m})(\overline{b}-\overline{m})\\
   & = (m-\overline{m})
       \big((a-b-\overline{a}+\overline{b})m\overline{m}
        -(a\overline{b}-\overline{a}b)m
         -(a\overline{b}-\overline{a}b)\overline{m}
         +(a-b)\overline{a}\overline{b}-(\overline{a}-\overline{b})ab\big)=0.
\end{align*}
Hence, the point $ m $ is on the circle defined by the equation
\begin{equation}\label{eq:m-on-circle}
       (a-b-\overline{a}+\overline{b})z\overline{z}
        -(a\overline{b}-\overline{a}b)z
         -(a\overline{b}-\overline{a}b)\overline{z}
         +(a-b)\overline{a}\overline{b}-(\overline{a}-\overline{b})ab=0.
\end{equation}
The point $ m $ is also a point on the circle $C$ passing through 
the three points $ a,b,\overline{a},$  see \ref{geod1}.
The general form of the circle $ C $ is given by  
\begin{equation}\label{eq:m-on-C}
  (a-b+\overline{a}-\overline{b})z\overline{z}
   -(a\overline{a}-b\overline{b})z-(a\overline{a}-b\overline{b})\overline{z}
   +ab(\overline{a}-\overline{b})+\overline{a}\overline{b}(a-b)=0,
\end{equation}
and the point $ m $ satisfies this equation.
Eliminating $ \overline{z} $ from \eqref{eq:m-on-circle} and 
\eqref{eq:m-on-C}, we obtain
$$
   |a-b|^2\big((a+b-\overline{a}-\overline{b})z^2
    -2(ab-\overline{a}\overline{b})z+ab(\overline{a}+\overline{b})
     -\overline{a}\overline{b}(a+b)\big)=0.
$$
(Here, the command ``resultant`` of Risa/Asir was used to eliminate 
$ \overline{z} $.)
As $ m\in\mathbb{H}^2 $, we have
$$
  m=\frac{ab-\overline{a}\overline{b}
        -|a-\overline{b}|\sqrt{-(a-\overline{a})(b-\overline{b})}}
         {a+b-\overline{a}-\overline{b}}
   =\frac{{\rm Im}(ab)+i|a-\overline{b}|\sqrt{{\rm Im}(a){\rm Im}(b)}}
         {{\rm Im}(a+b)}.
$$

(2) 
\begin{align*}
  LIS[a,\overline{b},\overline{a},b]
                  &=\frac{(\overline{a} \overline{b}-a b)
                          (\overline{a}-b)-(a-\overline{b})
                          (a b -\overline{a} \overline{b})}
                        {(\overline{a}-b)(\overline{a}-b)
                             -(a-\overline{b})(a-\overline{b})}
                   = \frac{(\overline{a} \overline{b}-a b)
                             (\overline{a} -b+a-\overline{b})}
                         {(\overline{a}-b+a-\overline{b})
                            (\overline{a}-b-a+\overline{b})}\\
                 &=\frac{-2i \textrm{Im}(ab)}{-2i\textrm{Im}(a+b)}
                  =\frac{ \textrm{Im}(ab)}{\textrm{Im}(a+b)}
                 =\textrm{Re}(m).
\end{align*}

\end{proof}
%\hfill $ \square $      
  
\begin{lem}\label{lem_p}
For $ a,b\in\mathbb{H}^2$, ${\rm Re}(a-b)\neq 0$, and $ {\rm Im}(a-b)\neq0 $,
$$
  p=\frac{|a|^2-|b|^2-2(a-b)d}{2{\rm Im}(a-b)}i,
$$
where $d$ is as in Lemma \ref{dForm}. 
\end{lem}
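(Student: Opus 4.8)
The plan is to obtain $p$ by a direct substitution into the line-intersection formula \eqref{LIS}, using the definition recorded in Table \ref{t1}:
\[
p = LIS\!\left[\tfrac{a+b}{2},\ \tfrac{a+b}{2}+i(b-a),\ d,\ d+i\right].
\]
Geometrically, the first pair spans the perpendicular bisector of $[a,b]$ (its direction $i(b-a)$ is orthogonal to $b-a$ and it passes through the midpoint $\tfrac{a+b}{2}$), while the second pair spans the vertical line through $d$; their intersection is the center of the circle through $a$ and $b$ tangent to $\mathbb{R}$ at $d$, exactly the point $p$ of Figure \ref{fig3}. Since ${\rm Im}(a-b)\neq0$ these two lines are not parallel, so the unique intersection exists and \eqref{LIS} applies.

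First I would record the elementary ingredients feeding into \eqref{LIS}. Abbreviating the four arguments as $a'=\tfrac{a+b}{2}$, $b'=\tfrac{a+b}{2}+i(b-a)$, $c'=d$, $d'=d+i$, one has $a'-b'=i(a-b)$ and $\overline{a'}-\overline{b'}=-i(\overline a-\overline b)$. Because $d\in\mathbb{R}$ by Lemma \ref{dForm}, the second line contributes the clean values $c'-d'=-i$, $\overline{c'}-\overline{d'}=i$, and $\overline{c'}d'-c'\overline{d'}=2di$.

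The only genuinely non-routine step is evaluating the ``swap'' term $\overline{a'}b'-a'\overline{b'}$. Expanding the product, the two modulus contributions $\tfrac14(\overline a+\overline b)(a+b)$ cancel, and in the remaining expression the mixed terms drop out in pairs since $\overline a b=b\overline a$ and $a\overline b=\overline b a$; what survives is precisely $\overline{a'}b'-a'\overline{b'}=-i(|a|^2-|b|^2)$. Tracking the signs here so that the cross terms cancel and the modulus terms combine with the correct coefficient is the one place to be careful; everything else is mechanical.

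Finally I would assemble the numerator and denominator of \eqref{LIS}. The numerator becomes
\[
\big(-i(|a|^2-|b|^2)\big)(-i)-\big(i(a-b)\big)(2di)=-(|a|^2-|b|^2)+2(a-b)d,
\]
and the denominator becomes
\[
\big(-i(\overline a-\overline b)\big)(-i)-\big(i(a-b)\big)(i)=(a-b)-(\overline a-\overline b)=2i\,{\rm Im}(a-b).
\]
Dividing and simplifying $-1/(2i)=i/2$ converts the overall sign into the factor $i$ and yields
\[
p=\frac{|a|^2-|b|^2-2(a-b)d}{2\,{\rm Im}(a-b)}\,i,
\]
which is the claimed formula.
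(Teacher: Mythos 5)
Your proof is correct: the sign bookkeeping in the key term $\overline{a'}b'-a'\overline{b'}=-i(|a|^2-|b|^2)$ and in the numerator and denominator of \eqref{LIS} all checks out, and the hypothesis ${\rm Im}(a-b)\neq 0$ is correctly invoked to guarantee the lines are not parallel. The geometric content is the same as the paper's (intersecting the vertical line through $d$ with the perpendicular bisector of $[a,b]$), but the execution differs slightly: the paper writes $p=d+yi$ and substitutes into the single line equation $(\overline{a}-\overline{b})z+(a-b)\overline{z}-|a|^2+|b|^2=0$ to solve for $y$, whereas you feed both lines into the general formula \eqref{LIS}. The two computations are of comparable length; yours is more mechanical and self-contained, while the paper's makes the fact ${\rm Re}(p)=d$ explicit from the outset.
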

\begin{proof}
The real part of the point $ p $ is $ d $, 
and it is a point on the perpendicular bisector of 
the line segment that connects the two points $ a $ and  $ b $.
The equation of the line is given by
\begin{equation}\label{eq:ab-orth}
  (\overline{a}-\overline{b})z+(a-b)\overline{z}
   -a\overline{a}+b\overline{b}=0.
\end{equation}
By setting $ p=d+yi $ and substituting $ p $ into \eqref{eq:ab-orth}, we obtain
$$
  (a-b-\overline{a}+\overline{b})yi-(a-b+\overline{a}-\overline{b})d
      +a\overline{a}-b\overline{b} =0.
$$
Therefore, we have
$$
   y=\frac{|a|^2-|b|^2-2{\rm Re}(a-b)d}{2{\rm Im}(a-b)},
$$
and
\begin{align*}
   p &= d+yi
      = d+\frac{|a|^2-|b|^2-2{\rm Re}(a-b)d}{2{\rm Im}(a-b)}i \\
     &= d\Big(1-\frac{{\rm Re}(a-b)}{{\rm Im}(a-b)}i\Big)+
        \frac{|a|^2-|b|^2}{2{\rm Im}(a-b)}i
      = -d\frac{a-b}{{\rm Im}(a-b)}i+\frac{|a|^2-|b|^2}{2{\rm Im}(a-b)}i\\
     &=\frac{|a|^2-|b|^2-2d(a-b)}{2{\rm Im}(a-b)}i.
\end{align*}
Hence, we have the assertion.    
\end{proof}

\begin{lem}
For $ a,b\in\mathbb{H}^2$, ${\rm Re}(a-b)\neq 0$, and $ {\rm Im}(a-b)\neq0 $,
$$
  q=\frac{|a|^2-|b|^2-2(a-b)f}{2{\rm Im}(a-b)}i,
$$
where
$$
  f=\frac{{\rm Im}(a\overline{b})-{\rm sgn}\big({\rm Re}(a-b)\big)|a-b|
           \sqrt{{\rm Im}(a){\rm Im}(b)}}{{\rm Im}(a-b)}.
$$    
\end{lem}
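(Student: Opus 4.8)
The plan is to follow the proof of Lemma~\ref{lem_p} step for step, with the tangent point $d$ replaced by the tangent point $2c-d$ of the larger of the two circles through $a$ and $b$. By its definition in Table~\ref{t1},
\[
q = LIS\Big[\tfrac{a+b}{2}, \tfrac{a+b}{2} + i(b-a), 2c-d, 2c-d+i\Big],
\]
so $q$ lies simultaneously on the perpendicular bisector of the segment $[a,b]$ and on the vertical line $\{z : {\rm Re}(z) = 2c-d\}$, where $c = {\rm Im}(a\overline{b})/{\rm Im}(a-b)$ as in Table~\ref{t1}.

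The first step is to recognize the quantity $f$ in the statement as $2c-d$. Using the formula for $c$ together with the expression for $d$ from Lemma~\ref{dForm}, a direct subtraction gives
\[
2c - d = \frac{2\,{\rm Im}(a\overline{b}) - {\rm Im}(a\overline{b}) - {\rm sgn}({\rm Re}(a-b))|a-b|\sqrt{{\rm Im}(a){\rm Im}(b)}}{{\rm Im}(a-b)} = f,
\]
so that ${\rm Re}(q) = 2c - d = f$; in other words, $f$ is precisely the real-axis tangent point of the larger circle.

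With this in hand, the remaining computation is identical to that of Lemma~\ref{lem_p}. I would write $q = f + yi$ and substitute into equation~\eqref{eq:ab-orth}, the equation of the perpendicular bisector of $[a,b]$; solving the resulting linear relation for $y$ yields
\[
y = \frac{|a|^2 - |b|^2 - 2\,{\rm Re}(a-b)f}{2\,{\rm Im}(a-b)}.
\]
Finally, the identity $1 - i\,{\rm Re}(a-b)/{\rm Im}(a-b) = -i(a-b)/{\rm Im}(a-b)$ lets me rewrite $q = f + yi$ in the claimed form. Since every step after the identification $f = 2c-d$ reproduces the proof of Lemma~\ref{lem_p} verbatim, the only point requiring any thought is that opening identity, and I expect no genuine obstacle.
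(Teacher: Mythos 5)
Your proposal is correct and coincides with the paper's own treatment: the paper's proof of this lemma consists of the single remark that it is obtained in the same way as Lemma \ref{lem_p}, which is exactly the substitution $d \mapsto 2c-d=f$ that you carry out. Your explicit verification that $f=2c-d$ and the algebra leading to $y$ and to the final form of $q$ are all accurate.
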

\begin{proof}
The proof is obtained by the same way as that of Lemma \ref{lem_p}.    
\end{proof}

\begin{cor}
For $ a,b\in S^1(0,1)\cap\mathbb{H}^2$, 
${\rm Re}(a-b)\neq 0$, and $ {\rm Im}(a-b)\neq0 $,
$$
  p=\dfrac{2ab}{1+ab}d,
$$
where $d$ is as in Lemma \ref{myFD2}.
\end{cor}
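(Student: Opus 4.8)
The plan is to specialize the general formula for $p$ from Lemma~\ref{lem_p} to the normalized setting $|a|=|b|=1$ and then to simplify the resulting expression using the unit-circle relations $\overline{a}=1/a$ and $\overline{b}=1/b$. Since the normalized value of $d$ is already recorded in Lemma~\ref{myFD2}, all that remains is to identify the scalar prefactor multiplying $d$.

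First I would recall from Lemma~\ref{lem_p} that, for $a,b\in\mathbb{H}^2$ with ${\rm Re}(a-b)\neq0$ and ${\rm Im}(a-b)\neq0$,
\[
p=\frac{|a|^2-|b|^2-2(a-b)d}{2\,{\rm Im}(a-b)}\,i.
\]
Imposing $|a|^2=|b|^2=1$ makes the first term of the numerator vanish, so that
\[
p=\frac{-2(a-b)d}{2\,{\rm Im}(a-b)}\,i=\frac{-(a-b)\,i}{{\rm Im}(a-b)}\,d.
\]
Thus it suffices to show that the prefactor $-(a-b)i/{\rm Im}(a-b)$ equals $2ab/(1+ab)$.

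Next I would compute ${\rm Im}(a-b)$ on the unit circle. Writing $\overline{a}=1/a$ and $\overline{b}=1/b$ gives $\overline{a}-\overline{b}=(b-a)/(ab)=-(a-b)/(ab)$, whence
\[
{\rm Im}(a-b)=\frac{(a-b)-(\overline{a}-\overline{b})}{2i}=\frac{(a-b)\bigl(1+1/(ab)\bigr)}{2i}=\frac{(a-b)(1+ab)}{2i\,ab}.
\]
Substituting this into the prefactor and using $-i\cdot 2i=2$ yields
\[
\frac{-(a-b)\,i}{{\rm Im}(a-b)}=\frac{-(a-b)\,i\cdot 2i\,ab}{(a-b)(1+ab)}=\frac{2ab}{1+ab},
\]
so that $p=\dfrac{2ab}{1+ab}\,d$ with $d$ as in Lemma~\ref{myFD2}, which is the claim.

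The argument is entirely elementary, and I do not expect any genuine obstacle: the whole content is the substitution $|a|=|b|=1$ together with $\overline{a}=1/a$, $\overline{b}=1/b$. The only point requiring a little care is the bookkeeping of the factors of $i$ in passing from ${\rm Im}(a-b)$ back to the numerator, and the cancellation of the common factor $(a-b)$, which is legitimate precisely because ${\rm Re}(a-b)\neq0$ and ${\rm Im}(a-b)\neq0$ guarantee $a\neq b$.
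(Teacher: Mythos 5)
Your proof is correct and takes essentially the same approach as the paper: both arguments amount to specializing to the unit circle via $\overline{a}=1/a$, $\overline{b}=1/b$ and tracking the resulting scalar factor in front of $d$. The only cosmetic difference is that you substitute directly into the closed formula of Lemma \ref{lem_p}, while the paper re-solves the specialized perpendicular-bisector equation $(a-b)z-ab(a-b)\overline{z}=0$ for the imaginary part of $p$; the computations are equivalent.
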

\begin{proof}
For $ a,b\in S^1(0,1)\cap\mathbb{H}^2 $,
the equation \eqref{eq:ab-orth} can be written as
\begin{equation}\label{eq:ab-orthS}
  (a-b)z-ab(a-b)\overline{z}=0.
\end{equation}
Setting $ p=d+yi $ and substituting $ p $ into \eqref{eq:ab-orthS}, we have
$$
   y=\frac{(1-ab)d}{1+ab}i,
$$
and
$$
    p = d+yi
      = d-\frac{(1-ab)d}{1+ab}
      =\frac{2ab}{1+ab}d.
$$
Hence, we have the assertion.    
\end{proof}

In the same way, we obtain the following result.

\begin{cor}
For $ a,b\in S^1(0,1)\cap \mathbb{H}^2$, 
  ${\rm Re}(a-b)\neq 0$, and $ {\rm Im}(a-b)\neq0 $,
$$
  q=\dfrac{2ab}{1+ab}g,
$$
where
$$
  g=\frac1{1+ab}\Big(a+b+2{\rm sgn}\big({\rm Re}(a+b)\big)
            \sqrt{ab{\rm Im}(a){\rm Im}(b)}\Big).
$$    
\end{cor}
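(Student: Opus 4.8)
The plan is to mirror the proof of the preceding corollary (the one for $p$) almost verbatim, the only genuine change being the base point on the real axis. By the definition of $q$ recorded in Table \ref{t1}, the point $q$ is the intersection of the perpendicular bisector of the segment $[a,b]$ with the vertical line through $2c-d$; hence ${\rm Re}(q)=2c-d$, independently of any normalization. So the first step is to verify that, in the normalized case $|a|=|b|=1$, this real part coincides with $g$. Using the unit-circle formulas $c=(a+b)/(ab+1)$ and the expression for $d$ from Lemma \ref{myFD2} (both collected in Table \ref{t2}), a one-line computation gives
$$
2c-d=\frac{2(a+b)-\big(a+b-2\,{\rm sgn}({\rm Re}(a+b))\sqrt{ab\,{\rm Im}(a){\rm Im}(b)}\big)}{1+ab}=g.
$$

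Next, since $q$ lies on the \emph{same} line as $p$, namely the perpendicular bisector of $[a,b]$, I would reuse its unit-circle form \eqref{eq:ab-orthS}, that is $(a-b)z-ab(a-b)\overline{z}=0$, equivalently $z=ab\,\overline{z}$. Writing $q=g+yi$ with $g\in\mathbb{R}$, so that $\overline{q}=g-yi$, and substituting into this equation yields $g+yi=ab(g-yi)$, which solves to $yi=g(ab-1)/(1+ab)$. Adding $g$ then gives
$$
q=g+yi=g\cdot\frac{(1+ab)+(ab-1)}{1+ab}=\frac{2ab}{1+ab}\,g,
$$
exactly as in the corollary for $p$ with $d$ replaced by $g$. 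This is precisely the content captured by the phrase ``in the same way''.

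The computation is entirely routine and I do not expect a genuine obstacle. The only points requiring a moment's care are, first, the algebraic identity $2c-d=g$ of Step one, where the matching of the sign functions and of the square-root terms must be checked (this is where an error would most likely arise); and second, the trivial but necessary observation that $g=2c-d$ is real, since both $c$ and $d$ lie on $\mathbb{R}$ by their definitions, so that the conjugation $\overline{q}=g-yi$ used in Step two is legitimate. Once these are in place, the conclusion follows immediately.
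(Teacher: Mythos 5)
Your proof is correct and takes essentially the same route as the paper, which simply states that the result is obtained ``in the same way'' as the corresponding corollary for $p$: one substitutes $q=g+yi$ into the unit-circle form of the perpendicular bisector equation and solves. Your explicit verification that ${\rm Re}(q)=2c-d=g$ via the normalized formulas for $c$ and $d$ is a detail the paper leaves implicit, but it does not constitute a different approach.
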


%%%%%%%%%%%%%%%%%%%%%%%%%%%
%%%%%%%%%%%%%%%%%%%%%%%%%%%
%%%%%%%%%%%%%%%%%%%%%%%%%%%

\section{Collinear points}
%%%%%%%%%%%%%%%%%%%%%%%%%%%
%%%%%%%%%%%%%%%%%%%%%%%%%%%
%%%%%%%%%%%%%%%%%%%%%%%%%%%

In this section, we prove that four points $u,s,m$ and $v$, naturally connected with the proof of the main
results, are in fact collinear. See Figure \ref{fig5}.

%  \hfill{filename: vh24-0105-pointSV.tex}
\noindent
\begin{lem} \label{sv} For $ a,b\in\mathbb{H}^2 $, let $v=LIS[a,a_*,b,b_*]$ and
          $s=LIS[a,b_*,b,a_*]$. Then, %  vh24-0105-pointSV.tex
$$
  v=\frac{2ab-a\overline{a}-b\overline{b}-|a-b||a-\overline{b}|}
         {a+b-\overline{a}-\overline{b}}
   =\frac{2ab-|a|^2-|b|^2-|a-b||a-\overline{b}|}
         {2i \cdot{\rm Im}(a+b)}
$$
and
$$
  s=\frac{2ab-a\overline{a}-b\overline{b}+|a-b||a-\overline{b}|}
         {a+b-\overline{a}-\overline{b}}
   =\frac{2ab-|a|^2-|b|^2+|a-b||a-\overline{b}|}
         {2i \cdot {\rm Im}(a+b)}
$$
hold.
\end{lem}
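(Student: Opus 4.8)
The plan is to obtain both points by a single direct substitution into the formula \eqref{LIS} for $LIS$, using the crucial feature that the endpoints $a_*$ and $b_*$ produced by Lemma \ref{epF} are \emph{real}. Writing $\alpha=a_*$, $\beta=b_*$ so that $\overline{\alpha}=\alpha$ and $\overline{\beta}=\beta$, and inserting $a,\alpha,b,\beta$ into \eqref{LIS} in the order prescribed by $v=LIS[a,a_*,b,b_*]$, the self-conjugate arguments make the cross terms collapse. A short computation gives numerator $-2i\bigl(\alpha b\,{\rm Im}(a)-\beta a\,{\rm Im}(b)-\alpha\beta\,{\rm Im}(a-b)\bigr)$ and denominator $-2i\bigl({\rm Im}(a\overline b)-\beta\,{\rm Im}(a)+\alpha\,{\rm Im}(b)\bigr)$; the factor $-2i$ cancels and the remaining denominator is real, so
\[
v=\frac{\alpha\,b\,{\rm Im}(a)-\beta\,a\,{\rm Im}(b)-\alpha\beta\,{\rm Im}(a-b)}
        {{\rm Im}(a\overline b)-\beta\,{\rm Im}(a)+\alpha\,{\rm Im}(b)}\,.
\]

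Next I would remove the explicit endpoints in favour of their symmetric functions. Since $a_*$ and $b_*$ are exactly the two roots of the quadratic \eqref{eq:aster2} (equivalently \eqref{eq:astar1}), Vieta's relations yield $a_*+b_*=(|a|^2-|b|^2)/{\rm Re}(a-b)$, $a_*b_*=(|a|^2{\rm Re}(b)-|b|^2{\rm Re}(a))/{\rm Re}(a-b)$ and $a_*-b_*=|a-b||a-\overline b|/{\rm Re}(a-b)$. Substituting these and clearing the factor ${\rm Re}(a-b)$ converts the target equality $v\cdot 2i\,{\rm Im}(a+b)=2ab-|a|^2-|b|^2-|a-b||a-\overline b|$ into a polynomial identity in ${\rm Re}(a),{\rm Im}(a),{\rm Re}(b),{\rm Im}(b)$ and the single surd $R:=|a-b||a-\overline b|$, subject to the polynomial relation $R^2=|a-b|^2|a-\overline b|^2$. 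This identity is then confirmed by the same computer-algebra bookkeeping (Risa/Asir) employed elsewhere in the paper, and the two closed forms in the statement are interchanged merely by the rewriting $a\overline a=|a|^2$, $b\overline b=|b|^2$ and $a+b-\overline a-\overline b=2i\,{\rm Im}(a+b)$, so proving one proves the other.

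The formula for $s$ then costs nothing further. As $s=LIS[a,b_*,b,a_*]$ arises from $v=LIS[a,a_*,b,b_*]$ by interchanging $a_*\leftrightarrow b_*$, and this interchange fixes $a_*+b_*$ and $a_*b_*$ while reversing the sign of $a_*-b_*$, it reverses the sign of the surd $R$ in the closed form for $v$. Hence $s$ is obtained from $v$ by replacing $-|a-b||a-\overline b|$ with $+|a-b||a-\overline b|$, which is precisely the asserted expression for $s$.

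The main obstacle is the bookkeeping of the surd $R=|a-b||a-\overline b|$: the individual endpoints $a_*,b_*$ are irrational in the coordinates of $a,b$, and only their symmetric combinations are rational. Keeping the surd-free symmetric part and the single surd term separate, and using $R^2=|a-b|^2|a-\overline b|^2$ to reduce all even powers of $R$, is what makes the simplification tractable and is where the manual and symbolic post-processing concentrates. I would also note that the sign subtleties tied to ${\rm Re}(a-b)$—the ordering distinction already settled in Lemma \ref{epF}—do not affect the final formulas, since $R>0$ enters only through the symmetric product $a_*b_*$ and the sign-reversing difference $a_*-b_*$, both accounted for above.
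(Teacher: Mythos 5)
Your proposal is correct, and it reorganizes the computation in a way that differs usefully from the paper's own proof. The paper substitutes the full surd-laden expressions $ep^{\pm}$ for $a_*,b_*$ directly into \eqref{LIS}, obtaining large numerators and denominators $V_n(1),V_d(1)$, and then rationalizes by multiplying through by the conjugate $(a-b)(\overline{a}-\overline{b})+|a-b||a-\overline{b}|$, invoking the relation $|a-b|^2|a-\overline{b}|^2=(a-b)(\overline{a}-\overline{b})(a-\overline{b})(\overline{a}-b)$ twice; for $s$ it simply repeats the procedure. You instead first exploit the realness of $a_*,b_*$ to collapse \eqref{LIS} to a compact intermediate form (your numerator and denominator check out), then eliminate the individual endpoints via Vieta's relations for the quadratic \eqref{eq:aster2} together with $a_*-b_*=|a-b||a-\overline{b}|/{\rm Re}(a-b)$, reducing the claim to a single polynomial identity in the coordinates and the surd $R$ with $R^2$ rational. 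This buys two things: the surd appears only once (through $a_*-b_*$) rather than in every term, so no conjugate-rationalization step is needed; and the formula for $s$ follows instantly from the observation that $a_*\leftrightarrow b_*$ fixes the symmetric functions and negates $R$ — a genuinely cleaner disposal of the second half than the paper's ``same procedure.'' The only soft spot is that the final identity verification is delegated to symbolic computation rather than displayed, but that is exactly the methodology the paper itself adopts for $V_n(1),V_d(1)$, so it is not a gap. (Both your argument and the paper's tacitly require ${\rm Re}(a-b)\neq 0$ so that $a_*,b_*$ are finite; neither states it.)
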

\bigskip

\noindent
%{\it Proof.}
\begin{proof} From Lemma \ref{epF},
$$
  v=LIS[a,a_{\ast},b,b_{\ast}]=\frac{V_n(1)}{V_d(1)},
$$
where
\begin{align*}
  V_n(1)&=(a-b-\overline{a}+\overline{b})|a-b|^2|a-\overline{b}|^2
         +(a-b+\overline{a}-\overline{b})(2ab-a\overline{b}-\overline{a}b)
          |a-b||a-\overline{b}|\\
       &\qquad 
        -(a-b)(\overline{a}-\overline{b})(a+b-\overline{a}-\overline{b})
          (a\overline{a}-b\overline{b}),\\
 V_d(1)&=(a-b+\overline{a}-\overline{b})(a+b-\overline{a}-\overline{b})
         (|a-b||a-\overline{b}|-(a-b)(\overline{a}-\overline{b})).
\end{align*}
As $ |a-b||a-\overline{b}|^2
     =(a-b)(\overline{a}-\overline{b})(a-\overline{b})(\overline{a}-b) $,
we have
$$
  v=\frac{(a-b)(\overline{a}-\overline{b})(ab-\overline{a}\overline{b})
           -(2ab-a\overline{b}-\overline{a}b)|a-b||a-\overline{b}|}
         {(a+b-\overline{a}-\overline{b})
         \big((a-b)(\overline{a}-\overline{b})-|a-b||a-\overline{b}|\big)}.
$$
By multiplying the denominator and the numerator by
$ (a-b)(\overline{a}-\overline{b})+|a-b||a-\overline{b}| $, we have
$ v=V_n(2)/V_d(2) $, where
\begin{align*}
  V_n(2)&=-(2ab-a\overline{b}-\overline{a}b)|a-b|^2|a-\overline{b}|^2
         -(a-b)(\overline{a}-\overline{b})(a-\overline{a})(b-\overline{b})
           |a-b||a-\overline{b}| \\
        & \qquad
          +(a-b)^2(\overline{a}-\overline{b})^2(ab-\overline{a}\overline{b}),\\
  V_d(2)&=-(a+b-\overline{a}-\overline{b})|a-b|^2|a-\overline{b}|^2
         +(a-b)^2(\overline{a}-\overline{b})^2(a+b-\overline{a}-\overline{b}).
\end{align*}
By substituting 
$ |a-b||a-\overline{b}|^2
     =(a-b)(\overline{a}-\overline{b})(a-\overline{b})(\overline{a}-b) $ 
again,
we have
$$
  v=\frac{2ab-a\overline{a}-b\overline{b}-|a-b||a-\overline{b}|}
         {a+b-\overline{a}-\overline{b}}.
$$

\begin{figure}
    \centering
    \begin{tikzpicture}
    \draw (0,1.5) circle (0.7mm);
    \draw (2,3) circle (0.7mm);
    \draw (-2,0) circle (0.7mm);
    \draw (0.666,1.326) circle (0.7mm);
    \draw (0.666,2.321) circle (0.7mm);
    \draw (0.666,4.062) circle (0.7mm);
    \draw (0,-1.5) circle (0.7mm);
    \draw (2,-3) circle (0.7mm);
    \draw (-2,0) -- (2,3);
    \draw (0,1.5) -- (2,-3);
    \draw (2,3) -- (0,-1.5);
    \draw (-0.390,0) -- (0.666,4.062) -- (5.765,0) -- (-0.390,0);
    \draw (-0.390,0) -- (-2,0);
    \draw (0.666,2.321) -- (-2,0);
    \draw (0.666,4.062) -- (0.666,0);
    \draw (-0.390,0) circle (0.7mm);
    \draw (0.666,0) circle (0.7mm);
    \draw (1.535,0) circle (0.7mm);
    \draw (2.687,0) circle (0.7mm);
    \draw (5.765,0) circle (0.7mm);
    \draw (0,1.5) -- (5.765,0);
    \draw (2,3) -- (-0.390,0);
    \draw (1.535,1.535) circle (1.535cm);
    \draw (2.687,0) circle (3.077cm);
    \draw[dashed] (0,1.5) arc (60.8:-12.9:4.103);
    \draw[dashed] (2,3) arc (12.9:-60.8:4.103);
    \node[scale=1.3] at (0.36,1.6) {$a$};
    \node[scale=1.3] at (2,3.4) {$b$};
    \draw[fill=white,color=white] (0.9,2.45) -- (1.4,2.45) -- (1.4,2.15) -- (0.9,2.15) -- (0.9,2.45);
    \node[scale=1.3] at (1.1,2.35) {$m$};
    \node[scale=1.3] at (0.95,1.45) {$s$};
    \node[scale=1.3] at (0.95,4.15) {$v$};
    \node[scale=1.3] at (-0.7,0.3) {$a_*$};
    \node[scale=1.3] at (0.3,-0.2) {$u$};
    \node[scale=1.3] at (1.85,0.3) {$d$};
    \node[scale=1.3] at (6.1,0.3) {$b_*$};
    \node[scale=1.3] at (-2,-0.3) {$c$};
    \node[scale=1.3] at (2.8,-0.3) {$u_1$};
    \end{tikzpicture}
    \caption{The points $u,s=LIS[a, b_{\ast}, b, a_{\ast}],m,$ and $v$ are collinear. The point $s$ 
  is also the point of intersection
  of the angle bisectors of the triangle $\triangle(a,b,u).$}
    \label{fig5}
\end{figure}

Next, $ s $  is given by
$$
  s=LIS[a,b_{\ast},b,a_{\ast}].
$$
This point $ s $ can also be expressed in terms of 
$ a $ and $ b $ using exactly the same procedure.
\end{proof}
%\hfill $ \square $

\bigskip

%\newpage

%\noindent{\bf Corollary}
\begin{cor} \label{svunit}  %  vh24-0105-pointSV.tex
For $ a,b\in S^1(0,1) \cap\mathbb{H}^2$,
$$
  v=\frac{2ab-{\rm sgn}\big({\rm Re}(a-b)\big)(a-b)}{a+b},
 \qquad
  s=\frac{2ab+{\rm sgn}\big({\rm Re}(a-b)\big)(a-b)}{a+b}.
$$
\end{cor}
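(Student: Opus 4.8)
The plan is to substitute the unit-circle relations $a\overline a=b\overline b=1$ and $\overline a=1/a$, $\overline b=1/b$ into the formulas of Lemma \ref{sv} and then cancel a common factor. For the denominator,
\[
a+b-\overline a-\overline b=(a+b)-\frac{a+b}{ab}=\frac{(a+b)(ab-1)}{ab},
\]
and the numerator of $v$ becomes $2ab-a\overline a-b\overline b-|a-b|\,|a-\overline b|=2ab-2-|a-b|\,|a-\overline b|$. Hence
\[
v=\frac{ab\bigl(2ab-2-|a-b|\,|a-\overline b|\bigr)}{(a+b)(ab-1)},
\]
so the whole statement reduces to expressing the (nonnegative) product $|a-b|\,|a-\overline b|$ as a holomorphic quantity in $a,b$.

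The key step is the algebraic identity
\[
\frac{(a-b)(ab-1)}{ab}=a+\overline a-b-\overline b=2\,{\rm Re}(a-b),\qquad a,b\in S^1(0,1),
\]
which I would verify by expanding $(a-b)(ab-1)/(ab)=a-b+1/a-1/b$ and using $1/a=\overline a$, $1/b=\overline b$. Consequently this quantity is real with sign ${\rm sgn}({\rm Re}(a-b))$, while its modulus is $|a-b|\,|ab-1|/|ab|=|a-b|\,|a-\overline b|$ (using $|ab|=1$ and $|a-\overline b|=|ab-1|$, the latter already noted in Theorem \ref{my1}). Comparing sign and modulus gives
\[
|a-b|\,|a-\overline b|={\rm sgn}\bigl({\rm Re}(a-b)\bigr)\,\frac{(a-b)(ab-1)}{ab}.
\]

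Substituting this into the displayed expression for $v$, the numerator becomes
\[
ab(2ab-2)-{\rm sgn}\bigl({\rm Re}(a-b)\bigr)(a-b)(ab-1)=(ab-1)\bigl(2ab-{\rm sgn}({\rm Re}(a-b))(a-b)\bigr),
\]
and cancelling the common factor $ab-1$ against the denominator yields $v=\bigl(2ab-{\rm sgn}({\rm Re}(a-b))(a-b)\bigr)/(a+b)$, as claimed. The formula for $s$ follows from the identical computation, the only difference being the opposite sign of $|a-b|\,|a-\overline b|$ in Lemma \ref{sv}, which flips the sign in front of $(a-b)$. The only delicate point — and hence the main obstacle — is the bookkeeping of the sign: since $|a-b|\,|a-\overline b|$ is intrinsically nonnegative, passing to the holomorphic expression $(a-b)(ab-1)/(ab)$ necessarily produces the factor ${\rm sgn}({\rm Re}(a-b))$, and the clean reduction $(a-b)(ab-1)/(ab)=2{\rm Re}(a-b)$ is exactly what makes this sign transparent.
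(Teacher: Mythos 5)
Your proof is correct. One small note first: the denominator manipulations require $ab\neq 1$ and $a+b\neq 0$, but both hold automatically for distinct $a,b\in S^1(0,1)\cap\mathbb{H}^2$ (if $ab=1$ then $b=\overline a\notin\mathbb{H}^2$, and if $b=-a$ then ${\rm Im}(b)<0$), so there is no gap.

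Your route differs slightly from the paper's. The paper does not start from the closed-form output of Lemma \ref{sv}; instead it observes that on the unit circle the endpoints degenerate to $a_*={\rm sgn}({\rm Re}(a-b))$ and $b_*=-{\rm sgn}({\rm Re}(a-b))$, and then recomputes $v=LIS[a,a_*,b,b_*]$ and $s=LIS[a,b_*,b,a_*]$ ``in the same way as above,'' i.e.\ by rerunning the intersection computation with these simplified inputs. You instead specialize the already-derived general formulas, and the work is concentrated in the single identity $(a-b)(ab-1)/(ab)=a-b+\overline a-\overline b=2\,{\rm Re}(a-b)$, which converts the nonnegative product $|a-b|\,|a-\overline b|$ into a holomorphic expression carrying the factor ${\rm sgn}({\rm Re}(a-b))$, after which $ab-1$ cancels. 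This is arguably the cleaner derivation: it makes transparent exactly where the sign factor in the corollary comes from, whereas the paper's one-line proof leaves the reader to redo the $LIS$ simplification. Both arguments are elementary and both are valid; yours has the added merit of reusing Lemma \ref{sv} as a black box.
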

\bigskip

%\noindent{\it Proof}.
\begin{proof} For  $ a,b\in S^1(0,1)\cap\mathbb{H}^2 $,
note that
$$
  a_{\ast}={\rm sgn}\big({\rm Re}(a-b)\big),\qquad
  b_{\ast}=-{\rm sgn}\big({\rm Re}(a-b)\big)
$$
and we obtain $ s $ and $ v $ in the same way as above.
\end{proof}

%\hfill $ \square $

\begin{rem}\label{trivobs}
(1) Looking at the triangle $\triangle(m, \overline{m},-m)$ we see that $L[m,-i]$ bisects
the angle at $m$ of this right triangle. Moreover, by \eqref{hmid2}, we see that
  $d$ is the hyperbolic midpoint in the metric $\rho_{\mathbb{B}^2}$
  of the points $0$ and $u$ if $a,b\in S^1(0,1)\cap\mathbb{H}^2$.
 % cf. also \cite[p. 37 Ex. 3.21(2)]{hkv}.

(2) Let ${\rm Im}(a)<{\rm Im}(b), \quad {\rm Re}(a)<{\rm Re}(b)<0, $
as in Figure 5. Considering the similar right triangles 
$\triangle(m,u,u_1)$ and $\triangle(m,c,u), c= LIS(a,b,0,1),$  it is easily seen
 that the point $c$ is the image of $u$ in the inversion in $S^1(u_1,r),
 r=|m-u_1|.$ Hence $S^1(u_1,r)$ is the Apollonian circle with the base
 points $c$ and $u$, passing through $a$ and $b$. Thus, clearly for all $z \in S^1(u_1,r)$, the following
 angles are equal:
 \[
\measuredangle(c,z,a_*) =  \measuredangle(a_*, z,u)\,.
 \]
\end{rem}

\begin{nonsec}\label{challenge}{\bf Challenges for symbolic computation.}\label{challenges}
In the case of Figure 5, let $h(z) = a_* + t^2/\overline{(z-a_*)}$ be the inversion in $S^1(a_*,t), t= |m-a_*|.$ Then it turns out that
$
h(a) = v \,, h(s) = b \,.
$
Using a similar inversion in $S^1(b_*,|m- b_*|)$, we obtain corresponding
facts for the point pairs $\{b,a\}$ and  $\{a,v\}\,.$ Therefore
\[
\rho_{\mathbb{H}^2}(a,s)= \rho_{\mathbb{H}^2}(b,v)\,, \quad
\rho_{\mathbb{H}^2}(b,s)= \rho_{\mathbb{H}^2}(a,v) \,.
\]
It seems to be difficult to verify these facts using symbolic computation.
We have discovered these equalities by numeric checking and outline
below a manual brute force checking of a part of these claims, omitting
some lengthy formulas.

%///C:/Users/vuorinen/Documents/manus2023/Fujimura/vh24-0409.tex
For $a,b \in \mathbb{H}^2, {\rm Re}(a-b)\neq 0 $,
let
\begin{equation}
v=\frac{2ab-a\overline{a}-b\overline{b}-|a-b||a-\overline{b}|}
       {a+b-\overline{a}-\overline{b}},\qquad
s=\frac{2ab-a\overline{a}-b\overline{b}+|a-b||a-\overline{b}|}
       {a+b-\overline{a}-\overline{b}},
\end{equation}
by Lemma \ref{sv}.

Then,
\begin{align*}
  |v-a|^2&|s-\overline{b}|^2-|s-b|^2|v-\overline{a}| \\
     &=(v-a)(\overline{v}-\overline{a})(s-\overline{b})(\overline{s}-b)
        -(s-b)(\overline{s}-\overline{b})(v-\overline{a})(\overline{v}-a)\\
     &= \mbox{ brute force calculations....} \\
     &=\dfrac{1}{(a+b-\overline{a}-\overline{b})^3}
       \Big((a+b-\overline{a}-\overline{b})|a-b||a-\overline{b}|
             -(a-\overline{b})(\overline{a}-b)(a-b-\overline{a}+\overline{b})
       \Big) \\
     & \quad
       \times\Big(|a-b|^2|a-\overline{b}|^2-(a-b)(\overline{a}-\overline{b})
                   (a-\overline{b})(\overline{a}-b)\Big).
\end{align*}
The last factor can be written as
$$
    |a-b|^2|a-\overline{b}|^2-(a-b)(\overline{a}-\overline{b})
                   (a-\overline{b})(\overline{a}-b)
    = |a-b|^2|a-\overline{b}|^2-|a-b|^2|a-\overline{b}|^2=0.
$$
Hence, we have $ |v-a||s-\overline{b}|=|s-b||v-\overline{a}| $ and
$$
    \th \frac{\rho(v,a)}{2}=\frac{|v-a|}{|v-\overline{a}|}
    =\frac{|s-b|}{|s-\overline{b}|}= \th \frac{\rho(s,b)}{2}.
$$
\end{nonsec}

%The observation in Figure \ref{fig6} was proved by M. Fujimura
%(FILE: vh0130.tex), see the next pages.

%===========================================================================================

%\includepdf[pages=-,pagecommand={},scale=0.9]{vh0130.pdf}
%===========================================================================================
%\hfill{filename: vh0130.tex}

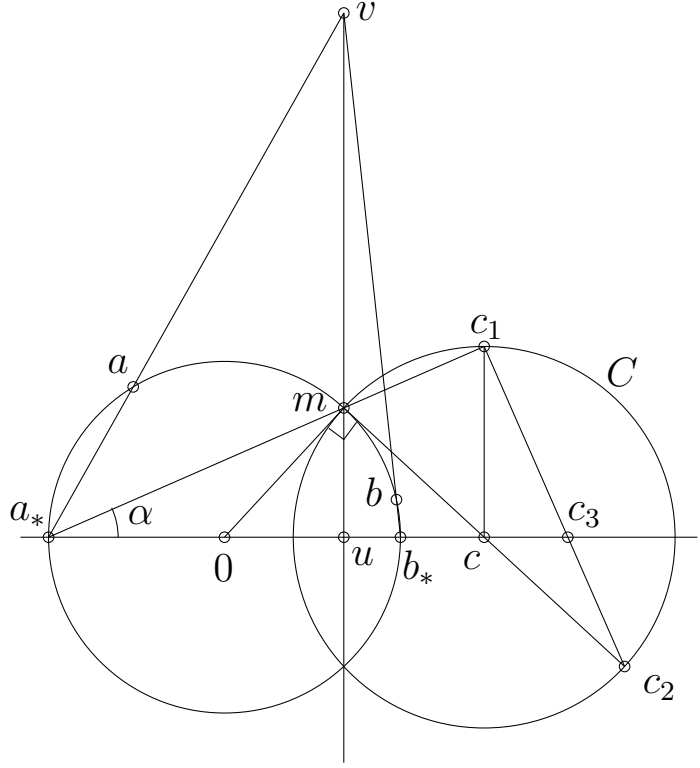
\begin{figure}
    \centering
    \begin{tikzpicture}
    \draw (-0.5,2) circle (0.7mm);
    \draw (3,0.5) circle (0.7mm);
    \draw (2.3,1.720) circle (0.7mm);
    \draw (2.3,6.975) circle (0.7mm);
    \draw (2.3,0) circle (0.7mm);
    \draw (0.714,0) circle (0.7mm);
    \draw (4.166,0) circle (0.7mm);
    \draw (4.166,2.538) circle (0.7mm);
    \draw (6.033,-1.720) circle (0.7mm);
    \draw (-1.625,0) circle (0.7mm);
    \draw (3.054,0) circle (0.7mm);
    \draw (5.278,0) circle (0.7mm);
    \draw (2.3,6.975) -- (-1.625,0);
    \draw (2.3,6.975) -- (2.3,-3);
    \draw (2.3,6.975) -- (3.054,0);
    \draw (4.166,2.538) -- (4.166,0);
    \draw (4.166,2.538) -- (6.033,-1.720);
    \draw (4.166,2.538) -- (-1.625,0);
    \draw (2.3,1.720) -- (6.033,-1.720);
    \draw (0.714,0) -- (2.3,1.720);
    \draw (-2,0) -- (7,0);
    \draw (0.714,0) circle (2.339cm);
    \draw (4.166,0) circle (2.538cm);
    \draw (-0.7,0) arc (0:25:0.925cm);
    \draw (2.1,1.45) -- (2.3,1.3) -- (2.47,1.53);
    \node[scale=1.3] at (-0.7,2.3) {$a$};
    \node[scale=1.3] at (2.7,0.6) {$b$};
    \node[scale=1.3] at (1.85,1.8) {$m$};
    \node[scale=1.3] at (0.7,-0.4) {$0$};
    \node[scale=1.3] at (2.6,7) {$v$};
    \node[scale=1.3] at (2.55,-0.25) {$u$};
    \node[scale=1.3] at (-1.9,0.3) {$a_*$};
    \node[scale=1.3] at (3.3,-0.4) {$b_*$};
    \node[scale=1.3] at (4,-0.3) {$c$};
    \node[scale=1.3] at (4.2,2.8) {$c_1$};
    \node[scale=1.3] at (5.5,0.3) {$c_3$};
    \node[scale=1.3] at (6.5,-2) {$c_2$};
    \node[scale=1.3] at (-0.4,0.3) {$\alpha$};
    \node[scale=1.3] at (6,2.2) {$C$};
    \end{tikzpicture}
    \caption{Here $a,b \in S^1(0,1) \cap \mathbb{H}^2$, $v={\rm LIS}[b_*,b,a,a_*],$ $u$ is the projection of $v$ on the real axis and $m= S^1(0,1)\cap [u,v].$ The circle $C$ is  $S^1(c,|c-m|)$ for $c={\rm LIS}[a,b,0,1]$. The $ \angle(O,m,c) $ is a right angle.}
    \label{fig6}
\end{figure}

\bigskip

Fix the notation for the next lemma as follows:
For $ a,b\in S^1(0,1)\cap \mathbb{H}^2 $,
let $ c $ be the intersection point of the real axis and
the line passing through $  a $ and  $ b $,
$ v $ the intersection point of the line passing through $ a, a_{\ast}=-1 $
and the line passing through $ b,b_{\ast}=1 $,
$ u $ the foot of the perpendicular from $ v $ to the real axis, and
$ m $ the intersection point of $ S^1(0,1) $ and the line passing through
$ u,v $ satisfying $ m\in\mathbb{H}^2 $.
Moreover, let $ C $ be the circle with center at $ c $ and radius $ |c-m| $,
and $ c_1 $ the intersection point of the circle $ C $ and
the line perpendicular to the real axis passing through $ c $
satisfying $ c_1\in\mathbb{H}^2 $.

That is,
%\begin{align*}
%   c &= LIS[a,b,0,1],\\
%   v &= LIS[a,a_*,b,b_*],\\
%   u &= LIS[a,b^{\ast},b,a^{\ast}],\\
%   m & \in S^1(0,1)\cap \mathbb{H}^2\cap L[u,v],\\
%   C &=S^1(c,|c-m|),\\
%   c_1 &= C\cap {\mathbb{H}^2}\cap L[c,c+i] . 
%\end{align*}
\begin{align*}
   c &= LIS[a,b,0,1], \quad  v = LIS[a,a_*,b,b_*],\\
  u &= LIS[a,b^{\ast},b,a^{\ast}], \quad  m  \in S^1(0,1)\cap \mathbb{H}^2\cap L[u,v],\\
     C &=S^1(c,|c-m|),\quad c_1 = C\cap {\mathbb{H}^2}\cap L[c,c+i] .
     \end{align*}

See Figure \ref{fig6}.

\bigskip

The above $m$ gives the hyperbolic midpoint of $a$ and $b$ (see Remark \ref{rem_4.5}).

\begin{lem}
If $ a,b\in S^1(0,1)\cap \mathbb{H}^2 $, $c= LIS[a,b,0,1]$, and $m$ is the hyperbolic midpoint of $a$ and $b$, then
$ \angle(O,m,c) $ is a right angle.    
\end{lem}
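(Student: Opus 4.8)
The plan is to reduce the claimed right angle to a single algebraic identity valid for points on the unit circle, after first observing that $m$ is automatically forced onto $S^1(0,1)$.

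First I would note that $S^1(0,1)\cap\mathbb{H}^2$ is itself a geodesic of the hyperbolic metric $\rho_{\mathbb{H}^2}$, since it is a Euclidean semicircle centered at $0\in\mathbb{R}=\partial\mathbb{H}^2$ (see \ref{geod1}). As $a$ and $b$ both lie on this geodesic, so does their hyperbolic midpoint $m$; hence $|m|=1$, and the segment $[O,m]$ runs along a radius of the unit circle. This is the conceptual reason the lemma should be true: the radius $[O,m]$ is perpendicular to the tangent of $S^1(0,1)$ at $m$, so the statement will amount to showing that $L[m,c]$ is exactly that tangent.

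Next I would express the right-angle condition analytically. Writing $O=0$, the angle $\measuredangle(O,m,c)$ equals $\pi/2$ precisely when the rays $m\to O$ and $m\to c$ are orthogonal, i.e. ${\rm Re}\big((-m)\overline{(c-m)}\big)=0$. Since $c=LIS[a,b,0,1]$ lies on the real axis and $|m|=1$, this expands to $-c\,{\rm Re}(m)+1=0$, so everything reduces to verifying the scalar relation $c\,{\rm Re}(m)=1$. Geometrically this is the familiar fact that the tangent to $S^1(0,1)$ at $m$ meets the real axis at $1/{\rm Re}(m)$, so $c\,{\rm Re}(m)=1$ says exactly that $L[m,c]$ is tangent at $m$, equivalently $[m,c]\perp[m,O]$.

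Finally I would carry out the computation of $c\,{\rm Re}(m)$. Using $c={\rm Im}(a\overline b)/{\rm Im}(a-b)$ from Table \ref{t1} together with ${\rm Re}(m)={\rm Im}(ab)/{\rm Im}(a+b)$ from Lemma \ref{hypMid24}(2), the desired equality $c\,{\rm Re}(m)=1$ becomes ${\rm Im}(a\overline b)\,{\rm Im}(ab)={\rm Im}(a-b)\,{\rm Im}(a+b)$. Writing $a=a_1+ia_2$ and $b=b_1+ib_2$ and expanding, the left-hand side equals $a_2^2b_1^2-a_1^2b_2^2$ while the right-hand side equals $a_2^2-b_2^2$; their difference is $a_2^2(b_1^2-1)-b_2^2(a_1^2-1)$, which vanishes upon substituting $a_1^2-1=-a_2^2$ and $b_1^2-1=-b_2^2$ coming from $|a|=|b|=1$. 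This yields $c\,{\rm Re}(m)=1$ and closes the argument. The only genuine work is this last simplification, and it is short once one passes to real and imaginary parts and invokes $|a|=|b|=1$, so I expect no real obstacle. (In the excluded configuration ${\rm Im}(a)={\rm Im}(b)$ the chord $L[a,b]$ is horizontal, $c$ lies at infinity and $m=i$, and the statement holds in the limiting sense that the tangent at $i$ is parallel to the real axis.)
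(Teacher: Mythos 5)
Your proof is correct, but it takes a genuinely different route from the paper's. The paper argues synthetically: it introduces the auxiliary circle $C=S^1(c,|c-m|)$, the points $c_1,c_2,c_3$, and chases angles using Thales' theorem, the inscribed-angle theorem and similar triangles to show $\angle(m,c,O)=\pi/2-2\alpha$ and $\angle(m,O,c)=2\alpha$, whence the third angle of $\triangle(O,m,c)$ is $\pi/2$. You instead observe that $m$ lies on $S^1(0,1)$ (since $S^1(0,1)\cap\mathbb{H}^2$ is the geodesic through $a$ and $b$; this also follows directly from the formula in Lemma \ref{hypMid24}), reduce the right angle to the tangency condition $c\,{\rm Re}(m)=1$, and verify the resulting identity ${\rm Im}(a\overline b)\,{\rm Im}(ab)={\rm Im}(a-b)\,{\rm Im}(a+b)$ using $|a|=|b|=1$ — all of which checks out, including the expansion $a_2^2b_1^2-a_1^2b_2^2=a_2^2-b_2^2$. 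Your computation is shorter and leans on formulas the paper has already established (Table \ref{t1} and Lemma \ref{hypMid24}(2)), and it makes explicit the clean statement that $L[m,c]$ is the Euclidean tangent to $S^1(0,1)$ at $m$, which immediately gives the orthogonality of $C$ and $S^1(0,1)$ that the paper records as a corollary of its angle chase; the paper's synthetic argument, by contrast, is coordinate-free and explains the picture in Figure \ref{fig6}. Your parenthetical treatment of the degenerate case ${\rm Im}(a)={\rm Im}(b)$ (where $c=\infty$) is a reasonable addition that the paper leaves implicit.
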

\begin{proof}
Let $ \alpha=\angle(O,a_*,m) $, $ c_2 $ the intersection point
of a line parallel to $ L[m,b_*] $ through $ c_1 $ and the circle $ C$,
and $ c_3 $ the intersection point of $ L[c_1,c_2] $ and the real axis.

From Thales' theorem, angles $ \angle(a_*,m,b_*) $ and 
$ \angle(m,c_1,c_2) $ are right angles.
The triangles $ \triangle(m,a_*,b_*)$, $ \triangle (u,b_*,m) $ 
and $ \triangle (c,c_3,c_1) $
are similar, so $ \angle(u,m,b_*)=\angle(c,c_1,c_3)=\alpha $.
The triangle $ \triangle(c_1,c,c_2) $ is an isosceles triangle, 
so $ \angle(c,c_2,c_1)=\angle(c,c_1,c_2)=\alpha $,
and its central angle, $ \angle(m,c,c_1) $, is $ 2\alpha $.
Therefore, we have $ \angle(m,c,O)=\frac{\pi}2-2\alpha $.

Here, $ \angle (m,O,b_*) $ is the central angle of
$ \angle(m,a_*,b_*) $, so $ \angle (m,O,b_*)=2\alpha $.

Consider the triangle $ \triangle(O,m,c) $.
Since $ \angle(m,c,O)=\frac{\pi}2-2\alpha $ and 
$ \angle (m,O,c)=2\alpha $, $ \triangle(O,m,c) $ is a
right triangle with $ \angle(O,m,c)=\frac{\pi}2 $.

\medskip

Hence, we have the assertion, and
the two circles $ S^1(0,1) $ and $ C $ are orthogonal to each other.
\end{proof}
%\hfill $ \square $

\bigskip

%\noindent{\bf Remark.}{\tt filename: vh0206-orth.tex}

%The result $m=LIS[-1,c_1, I, c+ |c-m|], c_1=c+I |c-m|,$ seems to hold.

\begin{rem}\label{rem_4.5}
%{\tt filename: vh0206-orth.tex}
From the proof of Lemmas \ref{uformula} and \ref{sv}, we have
\begin{align*}
  u &=\frac{{\rm Im}(ab)}{{\rm Im}(a+b)}, \\
  v &=\frac{2ab-|a|^2-|b|^2-|a-b||a-\overline{b}|}    
         {2i \cdot{\rm Im}(a+b)}\\
    & = \frac{{\rm Im}(ab)}{{\rm Im}(a+b)}
        -i\frac{{\rm Re}(ab)-|a|^2-|b|^2-|a-b||a-\overline{b}|}
               {2{\rm Im}(a+b)}, \\
  s &=\frac{2ab-|a|^2-|b|^2+|a-b||a-\overline{b}|}
         {2i \cdot {\rm Im}(a+b)} \\
    & = \frac{{\rm Im}(ab)}{{\rm Im}(a+b)}
        -i\frac{{\rm Re}(ab)-|a|^2-|b|^2+|a-b||a-\overline{b}|}
               {2{\rm Im}(a+b)}, \\
  m &=\frac{{\rm Im}(ab)+i|a-\overline{b}|\sqrt{{\rm Im}(a){\rm Im}(b)}}
          {{\rm Im}(a+b)} \\
    & =\frac{{\rm Im}(ab)}{{\rm Im}(a+b)}
           +i\frac{|a-\overline{b}|\sqrt{{\rm Im}(a){\rm Im}(b)}}
          {{\rm Im}(a+b)}.
\end{align*}

The real part of the four points $ u,v,s,m $ are all 
$ ({\rm Im}(ab))/({\rm Im}(a+b)) $.
Therefore, these points lie on the line perpendicular to
the real axis passing through $ u $.
\end{rem}

%Let $ l_a=LIS[a,b^*,a_*,b] $ and $ l_b=LIS[b,a^*,b_*,a] $.

\bigskip

% \newpage
%%%%%%%%%%%%%%%%%%%%%%%%%%%
%%%%%%%%%%%%%%%%%%%%%%%%%%%
%%%%%%%%%%%%%%%%%%%%%%%%%%%

\section{ Formula for $v_{\mathbb{H}^2}(a,b),$ $a,b\in S^1(0,1)\cap \mathbb{H}^2$}
%%%%%%%%%%%%%%%%%%%%%%%%%%%
%%%%%%%%%%%%%%%%%%%%%%%%%%%
%%%%%%%%%%%%%%%%%%%%%%%%%%%
  
We give a formula for $v_{\mathbb{H}^2}(a,b)$ 
with $a,b\in S^1(0,1)\cap \mathbb{H}^2$ in four steps.

\begin{nonsec}{\bf Four steps.} \label{4steps}
Figure \ref{fig7} illuminates the steps. Below, $a,b\in S^1(0,1)\cap \mathbb{H}^2.$
\begin{itemize}
\item[(1)] See formula for $u=LIS[a, \overline{b}, \overline{a},b]$ from Lemma \ref{uformula}.
\item[(2)] The circle through $a, b, d  = u/(1+ \sqrt{1-|u|^2})$ is tangent to the segment
$\,(-1,1)\,$ at $d$, see Remark \ref{dishm}.
\item[(3)] Let $k\in (-1,1),$ $$ya\in  L[a,k]\cap S^1(0,1) ;
 \quad yb \in L[b,k]\cap S^1(0,1)$$
with ${\rm Im}(ya)<0, {\rm Im}(yb)<0. $ If $w=k/(1+\sqrt{1-|k|^2}),$ then
$T_w(ya)+T_w(a) = 0= T_w(yb)+T_w(b)$ and hence
$\rho_{\mathbb{H}^2}(a,b)=\rho_{\mathbb{H}^2}(\overline{ya},\overline{yb}).$ Moreover, if $k=d$, then
$$
v_{\mathbb{H}^2}(a,b)=\measuredangle(a,d,b)\,.
$$
See Lemma \ref{4steps3}.
%{\rm Im}(ya)= {\rm Im}(yb)=\sqrt{1-t^2}, t= {\rm th}(\rho(a,b)/2), \quad
\item[(4)] Let $u=(1+ a\cdot b)/(a+b)\,,  d={u}/({1+\sqrt{1-|u|^2}}) $ and
$\theta =\measuredangle(a,d,b).$ Then $\theta=v_{\mathbb{H}^2}(a,b)$,  
\[
\sin(\theta)=  \frac{(1+\sqrt{1-|u|^2})\, t \sqrt{1-t^2}}{\sqrt{1- |u|^2 t^2}}\equiv T\,,
\quad  t= {\rm th}(\rho_{\mathbb{H}^2}(a,b)/2)\,.
\]
Moreover, $\theta={\rm arcsin} T$ if $1> t^2(1+\sqrt{1-|u|^2})$ and $\theta=\pi-{\rm arcsin} T$ if \newline $1< t^2(1+\sqrt{1-|u|^2})\,.$
 See Lemma \ref{4steps4} and Theorem \ref{my4step}.
\end{itemize}
\end{nonsec}

\begin{comment}
The following formula vH5unit seems to work for $a,b \in S^1(0,1)\cap \mathbb{H}^2$
(vH20231119b.nb):
\small
\begin{verbatim}
(* FindPts[a_,b_,p_]: a and b on the upper half of the unit circle, \
Im[a] not equal Im[b], p in (-1,1) /vH20231029.nb *)
FindPts[a_, b_, p_] := 
  Module[{c =  LIS[a, b, 0, 1.], d = p, tmpa, tmpb},
   tmpa = LIS[d, a, 0, I*(d - a)];
   tmpb = LIS[d, b, 0, I*(d - b)];
   ya = tmpa - (a - tmpa); yb = tmpb - (b - tmpb);
   {ya, yb}];
vH3[a_, b_] := Module[{c = LIS[a, b, 0, 1], d1, d2, r1, r2, tmp},
   d1 = my3Pts2[a, b, c + Sqrt[Abs[a - c] Abs[b - c]]];
   d2 = my3Pts2[a, b, c - Sqrt[Abs[a - c] Abs[b - c]]];
   Max[Abs[Arg[(a - Re[d1])/(b - Re[d1])]], 
    Abs[Arg[(a - Re[d2])/(b - Re[d2])]]]];

vH5unit[a_, b_] := 
  Module[{t = Abs[a - b]/Abs[Conjugate[a] - b], d, 
    u = Re[(1 + a*b)/(a + b)], tmp1, tmp2, ya, yb}, 
   d = Re[u/(1 + Sqrt[1 - Abs[u]^2])];
   ya = FindPts[a, b, d][[1]]; yb = FindPts[a, b, d][[2]];
   tmp1 =(1 + Sqrt[1 - Abs[u]^2]) t  Sqrt[1 - t^2];
   tmp2 = Sqrt[1 - (u*t)^2];
   If[ 1 + d^2 > 2 t^2, ArcSin[tmp1/tmp2], Pi - ArcSin[tmp1/tmp2]]];
    \end{verbatim} 
 
 \normalsize 
 %===========================================================================================

\begin{nonsec}{\bf Remark.}\label{F231215}
\begin{verbatim}
From Masayo-san 2023-12-15

I was able to find d from the relational equation in 4.1 (2).
Since the equation contains absolute values, I could not use a
computer and was stuck doing the calculations by hand.
Especially in the part of (a+b)/\sqrt{(a+b)^2}.
\end{verbatim}

$$
   d=\frac{u}{1+\sqrt{1-|u|^2}}
    =\frac{1}{1+a b} \Big(a+b - 2 \frac{a+b}{\sqrt{(a+b)^2}}
               \sqrt{a b \cdot  {\rm Im}(a)\cdot {\rm Im}(b)}\Big),
$$
where
$$
     \frac{a+b}{\sqrt{(a+b)^2}}
       =\begin{cases}
          1 & 0<\arg(a+b)\leq \frac{\pi}2 \\
          -1 & \arg(a+b)> \frac{\pi}2
       \end{cases}.
$$
Remark that
$$
      \frac{a+b}{\sqrt{(a+b)^2}}= {\rm sgn}( {\rm Re}(a+b)).
$$
So,
\begin{equation} \label{myFD}
d=\frac{a+b - 2\cdot  {\rm sgn}( {\rm Re}(a+b))
               \sqrt{a b \cdot  {\rm Im}(a)\cdot {\rm Im}(b)}}
           {1+ab}.
\end{equation}
[BTW, rmk by Matti: This formula agrees with myD by figPoints20231217.nb]
\end{nonsec}
\end{comment}

\bigskip
%
%In addition, we can also obtain (2023-12-22: for the correct version see 
%\eqref{sinth2})
%$$
%    \sin\theta = {\rm Im}\frac{a-d}{b-d}.
%$$
%\begin{verbatim}
%But I have failed to prove that this equation and (4) coincide.
%\end{verbatim}

%\begin{nonsec}{\bf Remark.}
%\begin{verbatim}
%From:
%
%F. masayo [a859607347@s7.dion.ne.jp]
%Lahetetty:	sunnuntai 17. joulukuuta 2023 10.43
%Vastaanottaja:	
%Matti Vuorinen; masayo@nda.ac.jp; p--masayo@s7.dion.ne.jp
%Dear Matti-san,
%
%I have not checked back yet, but perhaps 
%\end{verbatim}
%\begin{equation}\label{sinth1}
%    \sin\theta=\frac{|a-b|}{|a b-1|}
%               \Big(\frac12 |a+b| +\sqrt{ {\rm Im}(a)\cdot {\rm Im}(b)}\Big)
%               \quad 
%\end{equation}
%\begin{nonsec}{\bf Question.}
%Is it true that the following equality holds (Masayo-san 2023-12-17)
%\begin{equation}\label{sinth2}
%     \sin\theta=\frac{\Big| {\rm Im}\Big(\frac{a-d}{b-d}\Big)\Big|}
%                    {\Big|\frac{a-d}{b-d}\Big|} ?
%\end{equation} 
%\end{nonsec}         
%\begin{verbatim}
%hold?
%
%Best regards,
%Masayo
%\end{verbatim}
%{\bf Question.} Is \eqref{sinth2} proved somewhere? 
%\end{nonsec}

\begin{comment}
\begin{nonsec}{\bf Remark.}\label{sintheta}
The above formulas yield three formulas for $\sin \theta.$ The formulas are:
formulas \eqref{sinth1} and  \eqref{sinth2} /2023-12-17 and
$\sin(vH5unit[a,b]).$  The formulas look very 
different and it is natural to check whether they agree. My test figPoints20231217.nb
says that all three yield the same result for $a,b \in S^1(0,1)\cap \mathbb{H}^2$ 
(sample size =20).
I must admit that it is a mystery for me why these formulas agree.
(The mystery is resolved by Lemma \ref{myFD2} and Proposition \ref{sth1}). 
The formulas,
 $\sin(vH5unit[a,b])$ and  \eqref{sinth1}/Proposition \ref{sth1} yield
  connection with the hyp. metric (which was original
goal of this work) whereas   \eqref{sinth2} is simple and  elegant.
\end{nonsec}
\end{comment}

\begin{prop}\label{trivcomp}
For $a,b \in S^1(0,1)\cap \mathbb{H}^2$, let
\[
t= {\rm th} \frac{\rho_{\mathbb{H}^2}(a,b)}{2}= \frac{|a-b|}{|a-\overline{b}|}\,.
\]
Then
\[
\rho_{\mathbb{H}^2}(a,b) = \rho_{\mathbb{H}^2}(t+i \sqrt{1-t^2}, -t+i \sqrt{1-t^2}).
\]
\end{prop}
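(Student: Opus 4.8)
The plan is to reduce everything to the explicit half-plane distance formula \eqref{rhoH} together with one short computation. First I would set $a'=t+i\sqrt{1-t^2}$ and $b'=-t+i\sqrt{1-t^2}$ and check that both lie in $S^1(0,1)\cap\mathbb{H}^2$. Indeed $|a'|^2=t^2+(1-t^2)=1=|b'|^2$, so both points are on the unit circle, and since $a,b\in\mathbb{H}^2$ give $|a-b|<|a-\overline{b}|$ (as in the proof of Lemma \ref{quadr}), we have $0\le t<1$, whence $\sqrt{1-t^2}$ is real and positive and $a',b'\in\mathbb{H}^2$. Thus $\{a',b'\}$ is a horizontal point pair, symmetric about the imaginary axis.

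Next I would compute the two Euclidean quantities appearing in \eqref{rhoH}. A direct subtraction gives $a'-b'=2t$, so $|a'-b'|=2t$, while $\overline{b'}=-t-i\sqrt{1-t^2}$ yields $a'-\overline{b'}=2t+2i\sqrt{1-t^2}$ and hence $|a'-\overline{b'}|=\sqrt{4t^2+4(1-t^2)}=2$. Substituting into \eqref{rhoH},
\[
{\rm th}\frac{\rho_{\mathbb{H}^2}(a',b')}{2}=\frac{|a'-b'|}{|a'-\overline{b'}|}=\frac{2t}{2}=t.
\]

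Finally, since by hypothesis $t={\rm th}\frac{\rho_{\mathbb{H}^2}(a,b)}{2}$ and ${\rm th}$ is strictly increasing (hence injective) on $[0,\infty)$ while hyperbolic distances are nonnegative, I would conclude $\rho_{\mathbb{H}^2}(a',b')=\rho_{\mathbb{H}^2}(a,b)$. There is no genuine obstacle here: the only step that requires a word of care is the verification $a',b'\in\mathbb{H}^2$, needed so that formula \eqref{rhoH} applies, and this rests entirely on $0\le t<1$. Conceptually the proposition simply records that $(t+i\sqrt{1-t^2},\,-t+i\sqrt{1-t^2})$ is the canonical horizontal representative on the unit circle of any point pair at hyperbolic distance $\rho_{\mathbb{H}^2}(a,b)$, which is exactly the kind of normalization exploited in \ref{2.19}.
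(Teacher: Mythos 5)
Your proof is correct and is exactly the argument the paper intends: the paper's own proof is the single line ``follows from \eqref{rhoH}'', and your computation of $|a'-b'|=2t$ and $|a'-\overline{b'}|=2$ followed by injectivity of ${\rm th}$ is precisely the verification being left to the reader. No issues.
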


\begin{proof}
The proof follows from \eqref{rhoH}.
\end{proof}

\begin{prop}\label{yayb}
Let $a,b \in S^1(0,1)\cap \mathbb{H}^2$ and $k\in (-1,1)$ and $ya\in S^1(0,1)\cap L[a,k],$
 $yb\in S^1(0,1)\cap L[b,k]$ with ${\rm Im}(ya)<0$ and ${\rm Im}(yb)<0.$ Then
\[
\rho_{\mathbb{H}^2}(a,b)= \rho_{\mathbb{H}^2}(\overline{ya},\overline{yb})\,.
\]
\end{prop}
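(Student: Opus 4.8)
The plan is to reduce the asserted identity to the invariance of the hyperbolic metric $\rho_{\mathbb{B}^2}$ on the real diameter $(-1,1)$ under its Möbius automorphisms, using the bridge \eqref{rhoHB} between $\rho_{\mathbb{H}^2}$ on $S^1(0,1)\cap\mathbb{H}^2$ and $\rho_{\mathbb{B}^2}$ on $(-1,1)$. First I would obtain a closed form for $ya$ in terms of $a$ and $k$. Since $a$ and $ya$ both lie on $S^1(0,1)$, the chord $L[a,ya]$ has the standard equation $z+a\,ya\,\overline{z}=a+ya$; substituting the real point $z=k$ (so $\overline{z}=k$) gives $k(1+a\,ya)=a+ya$, and solving (the factor $ka-1$ never vanishes since $|1/k|>1$) yields
\[
ya=\frac{k-a}{1-ka}=-T_k(a),\qquad yb=-T_k(b),
\]
with $T_k$ as in \eqref{Ta} and $k$ real. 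One checks $|ya|=1$, and because $k$ lies strictly between the two chord endpoints on the real axis, the condition $\mathrm{Im}(ya)<0$ is automatic, matching the hypothesis.

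Next, since $\mathrm{Re}(\overline{ya})=\mathrm{Re}(ya)$ and $\overline{ya},\overline{yb}\in S^1(0,1)\cap\mathbb{H}^2$, formula \eqref{rhoHB} reduces the claim to proving
\[
\rho_{\mathbb{B}^2}(\mathrm{Re}\,a,\mathrm{Re}\,b)=\rho_{\mathbb{B}^2}(\mathrm{Re}\,ya,\mathrm{Re}\,yb).
\]
I would then compute $\mathrm{Re}(ya)$ as a function of $x=\mathrm{Re}(a)$. Writing $\mathrm{Re}(ya)=\tfrac12(ya+\overline{ya})$, using $\overline{a}=1/a$ and the identity $a^2+1=2a\,x$ valid for $|a|=1$, the expression collapses to
\[
\mathrm{Re}(ya)=\frac{\kappa-x}{1-\kappa x},\qquad \kappa=\frac{2k}{1+k^2}\in(-1,1).
\]
Thus the map $x\mapsto\mathrm{Re}(ya)$ equals $-T_\kappa$, a Möbius automorphism of $(-1,1)$ and hence a $\rho_{\mathbb{B}^2}$-isometry of the diameter. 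Applying it to $x=\mathrm{Re}(a)$ and $x=\mathrm{Re}(b)$ gives the displayed equality of $\rho_{\mathbb{B}^2}$-distances, and \eqref{rhoHB} then delivers $\rho_{\mathbb{H}^2}(a,b)=\rho_{\mathbb{H}^2}(\overline{ya},\overline{yb})$.

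The main obstacle is the passage between the two models. The tempting route is to invoke that $T_k$ (or the automorphism $T_w$ with $w=k/(1+\sqrt{1-|k|^2})$) is a hyperbolic isometry of $\mathbb{B}^2$ and transport the distance directly; this fails because such a disk isometry does \emph{not} preserve $\rho_{\mathbb{H}^2}$, which is exactly the delicate point exploited throughout this paper. The correct device is \eqref{rhoHB}, which pushes everything onto the one-dimensional diameter, where the induced map on real parts turns out to be a genuine Möbius automorphism. The one computation requiring care is verifying that $\mathrm{Re}(ya)$ simplifies to the clean Möbius form above; the cancellation is powered by the identity $a^2+1=2a\,\mathrm{Re}(a)$ for points on the unit circle.
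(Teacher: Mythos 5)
Your proof is correct, and it reaches the conclusion by a route that is genuinely different in execution from the paper's, though both ultimately rest on M\"obius automorphisms of $\mathbb{B}^2$ that preserve the real diameter. The paper identifies $ya,yb$ implicitly: the hyperbolic geodesics of $\mathbb{B}^2$ joining $a$ to $ya$ and $b$ to $yb$ both cross the real axis at the $\rho_{\mathbb{B}^2}$-midpoint $w=k/(1+\sqrt{1-|k|^2})$ of $0$ and $k$ (cf.\ \eqref{hmid2}), so $T_w$ carries both geodesics onto diameters, giving $T_w(ya)=-T_w(a)$ and $T_w(yb)=-T_w(b)$, after which the proposition is declared to follow. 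You instead extract the closed form $ya=(k-a)/(1-ka)=-T_k(a)$ directly from the chord equation $z+a\,ya\,\overline z=a+ya$ --- equivalent to the paper's normalization, since $2w/(1+w^2)=k$ --- and then make the final step explicit by transporting everything to the diameter via \eqref{rhoHB}: the induced map $\Re(a)\mapsto\Re(ya)$ is $-T_\kappa$ with $\kappa=2k/(1+k^2)$, a $\rho_{\mathbb{B}^2}$-isometry of $(-1,1)$. Your computations check out (in particular $\Im(ya)=(k^2-1)\Im(a)/|1-ka|^2<0$, so the hypothesis indeed selects your root), and your version supplies precisely the step the paper compresses into ``hence the proposition follows'': since $T_w$ is an isometry of $\rho_{\mathbb{B}^2}$ but not of $\rho_{\mathbb{H}^2}$, the antipodal relation alone does not finish the argument, and your identification of \eqref{rhoHB} as the needed bridge is well taken. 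What the paper's phrasing buys is the geometric picture of Figure \ref{fig7}(A) that is reused in Lemma \ref{4steps3}. A still shorter ending available from your closed form, bypassing \eqref{rhoHB} entirely: with $ya=-T_k(a)$, $yb=-T_k(b)$ one gets $|ya-yb|/|ya\,yb-1|=|a-b|/|ab-1|$ by direct cancellation, and since $|\overline{ya}-yb|=|ya\,yb-1|$ and $|a-\overline b|=|ab-1|$ on the unit circle, formula \eqref{rhoH} gives the assertion at once.
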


\begin{proof} The two hyperbolic lines with end points $a$ and $ya$, resp. $b$ and $yb,$
intersect the real axis at the hyperbolic midpoint $w= k/(1+\sqrt{1-|k|^2})$
of $0$ and $k.$ The M\"obius
transformation $T_w$ maps these two hyperbolic lines onto two diameters of the unit disk and
it is now clear that $T_w(a)+T_w(ya)=0 $ and  $T_w(b)+T_w(yb)=0 $ and hence the proposition
follows.
\end{proof}

\begin{lem} \label{4steps3}
The step (3) of the four steps \ref{4steps} holds.
\end{lem}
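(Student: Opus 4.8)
The plan is to separate the content of step (3) into its two logically distinct assertions and to dispatch them independently. The identities $T_w(ya)+T_w(a)=0=T_w(yb)+T_w(b)$ together with the resulting equality $\rho_{\mathbb{H}^2}(a,b)=\rho_{\mathbb{H}^2}(\overline{ya},\overline{yb})$ hold for every $k\in(-1,1)$ and are exactly the statement of Proposition \ref{yayb}, which I would simply invoke. The genuinely new content of step (3) is the last assertion, valid only for the special choice $k=d$, namely that the supremum defining the visual angle metric is attained at $d$, that is, $v_{\mathbb{H}^2}(a,b)=\measuredangle(a,d,b)$.

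For this I would use the definition $v_{\mathbb{H}^2}(a,b)=\sup\{\measuredangle(a,z,b):z\in\mathbb{R}\}$ from the introduction together with Lemma \ref{dForm}, by which $d$ is the point where the smaller of the two circles through $a$ and $b$ tangent to the real axis touches $\mathbb{R}$. Writing this circle as $C_d=S^1(p,r)$, it passes through $a,b,d$, is tangent to $\mathbb{R}$ at $d$, and otherwise lies in the open upper half plane. I would first note that every real point $z\neq d$ satisfies $|z-p|^2=(z-d)^2+r^2>r^2$ and hence lies strictly outside $C_d$. The maximization is then the classical Regiomontanus angle argument via the inscribed angle theorem (equivalently, via the law of sines $|a-b|=2R\sin\measuredangle(a,z,b)$, so that the smaller circumradius gives the larger angle): the chord $[a,b]$ of $C_d$ is seen under the fixed inscribed angle $\measuredangle(a,d,b)$ from every point of the arc of $C_d$ containing $d$, and under a strictly smaller angle from any point of $\mathbb{R}$ lying outside $C_d$ on that same side of the line $L[a,b]$.

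The point $c=LIS[a,b,0,1]$, where $L[a,b]$ meets $\mathbb{R}$, splits the real axis into the two half--lines lying on the two sides of $L[a,b]$; the two tangent points $d$ and $2c-d$ (see Figure \ref{fig3}) lie one on each side, and the tangent circle at $2c-d$ is the larger one and therefore yields the smaller inscribed angle. Thus I would carry out the exterior-point comparison on each half--line separately against its own tangent circle and then take the maximum of the two tangent-point angles, obtaining $\measuredangle(a,d,b)$ precisely because the smaller circle gives the larger angle; this is exactly the comparison ${\rm Im}(p)<{\rm Im}(q)$ recorded in the caption of Figure \ref{fig3}. The main obstacle is making the inscribed-angle inequality for exterior points fully rigorous while keeping track of which arc of $C_d$ contains $d$ and on which side of $L[a,b]$ each real point falls, so that the two half--lines are bounded by the correct tangent circle. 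The degenerate configuration ${\rm Im}(a)={\rm Im}(b)$, in which $L[a,b]$ is horizontal and $c=\infty$ so that the two-circle picture collapses, must instead be handled by the horizontal-pair discussion of \ref{2.19}.
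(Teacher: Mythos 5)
Your proposal is correct and follows essentially the same route as the paper: the first half of step (3) is exactly Proposition \ref{yayb} specialized to $k=d$, which is all that the paper's own proof invokes (together with the explicit formulas $ya=t-i\sqrt{1-t^2}$, $yb=-t-i\sqrt{1-t^2}$). The only difference is that you additionally spell out the Regiomontanus/inscribed-angle justification of $v_{\mathbb{H}^2}(a,b)=\measuredangle(a,d,b)$, which the paper treats as already established via the tangent-circle characterization of $d$ in Lemma \ref{dForm} and the comparison ${\rm Im}(p)<{\rm Im}(q)$ recorded in the caption of Figure \ref{fig3}; this extra detail is welcome but not a departure in method.
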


\begin{proof} See Figure \ref{fig7}(A). The proof follows from Proposition 5.3. Choosing $k=d$ in 5.3, we see that in the case
${\rm Re}(a)< {\rm Re}(b)$,
\[
ya=t -i\,\sqrt{1-t^2}, \quad  yb=-t -i\,\sqrt{1-t^2}\,.
\]
\end{proof}

%===========================================================================================

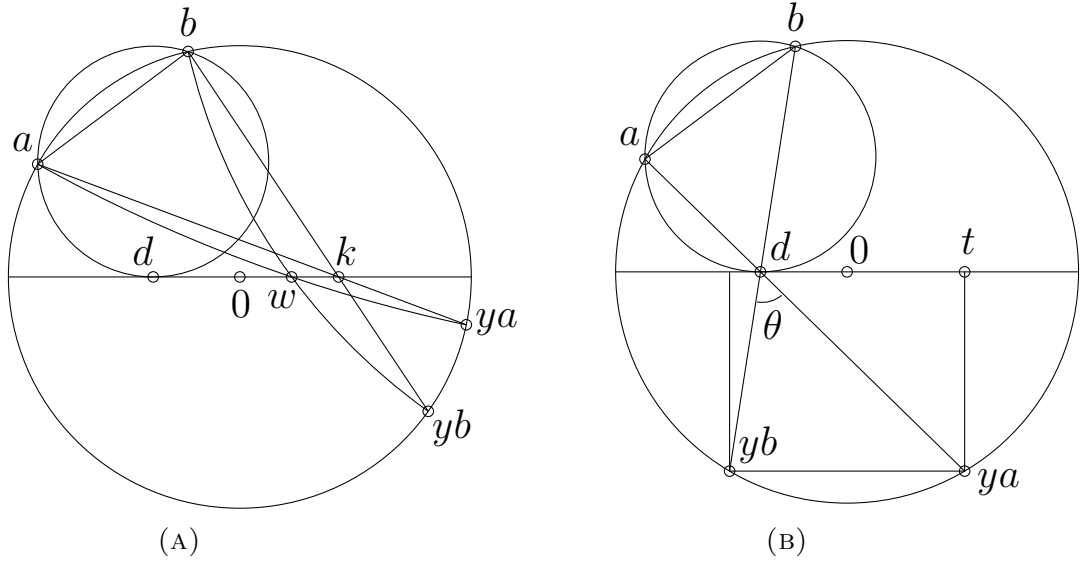
\begin{figure}
    \centering
    \begin{subfigure}[b]{0.3\textwidth}
    \centering
    \begin{tikzpicture}
    \draw (0,1.5) circle (0.7mm);
    \draw (2,3) circle (0.7mm);
    \draw (1.535,0) circle (0.7mm);
    \draw (2.687,0) circle (0.7mm);
    \draw (4,0) circle (0.7mm);
    \draw (3.376,0) circle (0.7mm);
    \draw (5.698,-0.636) circle (0.7mm);
    \draw (5.192,-1.788) circle (0.7mm);
    \draw (0,1.5) -- (2,3);
    \draw (-0.390,0) -- (5.765,0);
    \draw (0,1.5) -- (5.698,-0.636);
    \draw (2,3) -- (5.192,-1.788);
    \draw (2.687,0) circle (3.077cm);
    \draw (0,1.5) arc (240.832:258.055:20.323cm);
    \draw (2,3) arc (192.907:234.472:8.109cm);
    \draw (1.535,1.535) circle (1.535cm);
    \node[scale=1.3] at (-0.2,1.8) {$a$};
    \node[scale=1.3] at (2,3.4) {$b$};
    \node[scale=1.3] at (1.4,0.35) {$d$};
    \node[scale=1.3] at (2.7,-0.35) {$0$};
    \node[scale=1.3] at (3.25,-0.3) {$w$};
    \node[scale=1.3] at (4.1,0.35) {$k$};
    \node[scale=1.3] at (6.1,-0.6) {$ya$};
    \node[scale=1.3] at (5.5,-2) {$yb$};
    \end{tikzpicture}
    \caption{}
    \end{subfigure}
    \hspace{3cm}
    \begin{subfigure}[b]{0.3\textwidth}
    \centering
    \begin{tikzpicture}
    \draw (0,1.5) circle (0.7mm);
    \draw (2,3) circle (0.7mm);
    \draw (1.535,0) circle (0.7mm);
    \draw (4.250,-2.651) circle (0.7mm);
    \draw (1.125,-2.651) circle (0.7mm);
    \draw (4.250,0) circle (0.7mm);
    \draw (2.687,0) circle (0.7mm);
    \draw (0,1.5) -- (2,3);
    \draw (0,1.5) -- (4.250,-2.651);
    \draw (2,3) -- (1.125,-2.651);
    \draw (-0.390,0) -- (5.765,0);
    \draw (1.125,0) -- (1.125,-2.651) -- (4.250,-2.651) -- (4.250,0);
    \draw (2.687,0) circle (3.077cm);
    \draw (1.535,1.535) circle (1.535cm);
    \draw (1.49,-0.4) arc (260:310:0.4cm);
    \node[scale=1.3] at (-0.2,1.8) {$a$};
    \node[scale=1.3] at (2,3.4) {$b$};
    \node[scale=1.3] at (1.8,0.35) {$d$};
    \node[scale=1.3] at (1.7,-0.7) {$\theta$};
    \node[scale=1.3] at (4.7,-2.8) {$ya$};
    \node[scale=1.3] at (2.85,0.3) {$0$};
    \node[scale=1.3] at (4.3,0.4) {$t$};
    \node[scale=1.3] at (1.5,-2.3) {$yb$};
    \end{tikzpicture}
    \caption{}
    \end{subfigure}
    \caption{(A) For all $a,b \in S^1(0,1)\cap \mathbb{H}^2$ and $k\in (-1,1),$ $\rho_{\mathbb{H}^2}(a,b)=\rho_{\mathbb{H}^2}(\overline{ya},\overline{yb}).$ Moreover, if $w=k/(1+ \sqrt{1-|k|^2}),$ then $T_w(ya)+T_w(a)=T_w(yb)+T_w(b)=0.$ %See vH20231119b.nb.
    \newline
    (B) By choosing $k=d$ in Figure \ref{fig7}(A), we have ${\rm Im}(ya)= {\rm Im}(yb).$}
    \label{fig7}
\end{figure}

\begin{lem} \label{4steps4}
%The step (4) of Four steps \ref{4steps} holds. Moreover, the algorithm vH5unit works.
Let $a,b\in S^1(0,1)\cap \mathbb{H}^2$ and $u=(1+ab)/(a+b),\, d= u/(1+\sqrt{1-|u|^2}),$ 
and let $ya,yb$ be as  in Lemma \ref{4steps3} and $\theta=\measuredangle(ya,d,yb).$ Then $\theta= v_{\mathbb{H}^2}(a,b)$ and
\[
\sin(\theta)  =\frac{(1+\sqrt{1-|u|^2})\, t \sqrt{1-t^2}}{\sqrt{1- |u|^2 t^2}}\,,\quad
t= {\rm th} \frac{\rho_{ \mathbb{H}^2}(a,b)}{2} \,.
\]
 \end{lem}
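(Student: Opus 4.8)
The plan is to first identify the angle $\theta=\measuredangle(ya,d,yb)$ with $v_{\mathbb{H}^2}(a,b)$ via Lemma \ref{4steps3}, and then evaluate $\sin\theta$ directly from the explicit coordinates of $ya$, $yb$, and $d$.

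For the identification, recall from Lemma \ref{4steps3} (taken with $k=d$) that $ya\in L[a,d]\cap S^1(0,1)$ and $yb\in L[b,d]\cap S^1(0,1)$ both lie in the lower half plane, so the triples $d,a,ya$ and $d,b,yb$ are each collinear. Since $d\in(-1,1)\subset\mathbb{R}$ while ${\rm Im}(a),{\rm Im}(b)>0>{\rm Im}(ya),{\rm Im}(yb)$, the ray from $d$ through $a$ is opposite to the ray from $d$ through $ya$, and likewise for $b$ and $yb$. Hence $\measuredangle(ya,d,yb)$ and $\measuredangle(a,d,b)$ are vertical angles and are therefore equal. Combined with the equality $v_{\mathbb{H}^2}(a,b)=\measuredangle(a,d,b)$ established in step (3), this yields $\theta=v_{\mathbb{H}^2}(a,b)$.

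For the value of $\sin\theta$, I would use the coordinates from Lemma \ref{4steps3}: assuming without loss of generality ${\rm Re}(a)<{\rm Re}(b)$, we have $ya=t-i\sqrt{1-t^2}$ and $yb=-t-i\sqrt{1-t^2}$, while $d$ is real. Writing $v_1=ya-d$, $v_2=yb-d$, and $s=\sqrt{1-t^2}$, the sine of the angle between these vectors is $\sin\theta=|{\rm Im}(\overline{v_1}v_2)|/(|v_1|\,|v_2|)$. A short computation using $s^2=1-t^2$ gives ${\rm Im}(\overline{v_1}v_2)=-2t\sqrt{1-t^2}$ and $|v_1|^2|v_2|^2=(d^2+1)^2-4t^2d^2$, so that
\[
\sin\theta=\frac{2t\sqrt{1-t^2}}{\sqrt{(d^2+1)^2-4t^2d^2}}\,.
\]

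The final step is to rewrite the denominator in terms of $u$ and $t$. Since $a,b\in S^1(0,1)$, Lemma \ref{uformula}(2) gives $u\in(-1,1)$, so $u$ is real and $|u|^2=u^2$. Setting $w=\sqrt{1-|u|^2}$ and using $d=u/(1+w)$ together with $u^2=(1-w)(1+w)$, I obtain the key simplification $d^2=(1-w)/(1+w)$, whence $d^2+1=2/(1+w)$ and $(d^2+1)^2-4t^2d^2=4(1-u^2t^2)/(1+w)^2$. Substituting this back produces exactly
\[
\sin\theta=\frac{(1+\sqrt{1-|u|^2})\,t\sqrt{1-t^2}}{\sqrt{1-|u|^2t^2}}\,,
\]
as claimed. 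The only delicate point is this last algebraic reduction; everything hinges on the clean identity $d^2=(1-\sqrt{1-|u|^2})/(1+\sqrt{1-|u|^2})$, which collapses the radicand. The remaining case ${\rm Re}(a)>{\rm Re}(b)$ follows from the symmetry of the final expression under interchanging $a$ and $b$ (equivalently $ya$ and $yb$), since both $t$ and $u$ are symmetric in $a,b$.
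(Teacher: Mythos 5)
Your proposal is correct and follows essentially the same route as the paper: the paper computes $\sin\theta$ from the area identity $\tfrac12|ya-d|\,|yb-d|\sin\theta=t\sqrt{1-t^2}$ together with $(|ya-d|\,|yb-d|)^2=(1+d^2)^2-4t^2d^2$ and the same reduction $1+d^2=2/(1+\sqrt{1-|u|^2})$, and your cross-product formula $\sin\theta=|{\rm Im}(\overline{v_1}v_2)|/(|v_1|\,|v_2|)$ is just this area identity in disguise. Your explicit vertical-angle justification that $\measuredangle(ya,d,yb)=\measuredangle(a,d,b)$ is a small addition the paper leaves implicit via Figure \ref{fig7}(B).
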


\begin{proof} See Figure \ref{fig7}(B). The area of the triangle $\triangle(ya,d,yb)$ is
$t \sqrt{1-t^2}$ and, by trigonometry, also equal to
\begin{equation} \label{triarea}
\frac{1}{2}\, |ya-d||yb-d| \sin \theta =t \sqrt{1-t^2}\,.
\end{equation}
By the Pythagorean theorem,
\[ |ya-d|^2 = 1-t^2 + (t-d)^2 = 1-2td +d^2\,, \]
\[ |yb-d|^2 = 1-t^2 + (t+d)^2 = 1+2td +d^2\,. \]
Because $ d= u/(1+\sqrt{1-|u|^2}),$ we see that

\begin{equation*}
  \begin{cases}
  {\displaystyle 1+d^2 = \frac{2}{1+ \sqrt{1-|u|^2}}}&\\[3mm]
{\displaystyle (|ya-d||yb-d|)^2 = (1+d^2)^2- 4t^2 d^2 = 
\frac{4 (1- u^2 t^2)}{(1+ \sqrt{1-|u|^2})^2}}\,,&
\end{cases}
\end{equation*}
which by \eqref{triarea} yields the desired formula.
%with Lemma \ref{4steps3}.
\end{proof}

\begin{thm} \label{my4step} For $a,b\in S^1(0,1)\cap \mathbb{H}^2$
let $u=(1+ab)/(a+b)\,.$ Then
$$
    v_{\mathbb{H}^2}(a,b)
    =\begin{dcases}
      \arcsin T  &  {\rm if} \ 1\geq (1+\sqrt{1-|u|^2}) t^2,\\
      \pi - \arcsin T &  {\rm if} \ 1 < (1+\sqrt{1-|u|^2}) t^2\, ,
   \end{dcases}
$$
where
\[
T= \frac{(1+\sqrt{1-|u|^2})\, t \sqrt{1-t^2}}{\sqrt{1- |u|^2 t^2}}\,,\quad
t= {\rm th} \frac{\rho_{ \mathbb{H}^2}(a,b)}{2} \,.
\]
\end{thm}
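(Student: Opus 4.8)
The plan is to build directly on Lemma \ref{4steps4}, which has already done the geometric work: it identifies $v_{\mathbb{H}^2}(a,b)$ with the angle $\theta=\measuredangle(ya,d,yb)$ and establishes $\sin\theta=T$. Since $\theta\in(0,\pi)$ while $\arcsin$ returns values only in $[0,\pi/2]$, the identity $\sin\theta=T$ does \emph{not} determine $\theta$: one has $\theta=\arcsin T$ exactly when $\theta\le\pi/2$, and $\theta=\pi-\arcsin T$ when $\theta>\pi/2$. Thus the entire content of the theorem reduces to deciding the sign of $\cos\theta$, and to showing that the resulting dichotomy coincides with the threshold $1\gtrless(1+\sqrt{1-|u|^2})\,t^2$.

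Because $v_{\mathbb{H}^2}$, $T$, $t$ and $u=(1+ab)/(a+b)$ are all symmetric in $a$ and $b$, I would assume without loss of generality that ${\rm Re}(a)<{\rm Re}(b)$ (the case ${\rm Re}(a)={\rm Re}(b)$ forces $a=b$ and is trivial). Then by Lemma \ref{4steps3} the vertex lies at the real point $d=u/(1+\sqrt{1-|u|^2})$, with $ya=t-i\sqrt{1-t^2}$ and $yb=-t-i\sqrt{1-t^2}$, and $u\in(-1,1)$ is real by Lemma \ref{uformula}(2), so $d$ is real. The sign of $\cos\theta$ is the sign of the Euclidean inner product of the vectors $ya-d$ and $yb-d$, and computing it is the one concrete step:
\begin{align*}
{\rm Re}\bigl((ya-d)\,\overline{(yb-d)}\bigr)
&={\rm Re}(ya-d)\,{\rm Re}(yb-d)+{\rm Im}(ya-d)\,{\rm Im}(yb-d)\\
&=(t-d)(-t-d)+(1-t^2)=1+d^2-2t^2.
\end{align*}
Hence $\theta\le\pi/2$ if and only if $1+d^2\ge 2t^2$.

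To finish I would invoke the identity $1+d^2=2/(1+\sqrt{1-|u|^2})$, already recorded in the proof of Lemma \ref{4steps4}. Substituting it, the condition $1+d^2\ge 2t^2$ becomes $2/(1+\sqrt{1-|u|^2})\ge 2t^2$, that is, $1\ge(1+\sqrt{1-|u|^2})\,t^2$. Combining this with the branch analysis of the first paragraph yields precisely the stated case split. At the boundary $1=(1+\sqrt{1-|u|^2})\,t^2$ one has $\theta=\pi/2$ and $T=1$, so $\arcsin T=\pi-\arcsin T=\pi/2$ and the two formulas agree, which is consistent with putting the equality case on the $\arcsin T$ side via the non-strict inequality.

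I do not anticipate a genuine obstacle, since the geometry is fully prepared by Lemmas \ref{4steps3} and \ref{4steps4} and the remaining computation is short. The only point requiring care is the conceptual one: recognizing that $\sin\theta=T$ underdetermines $\theta$ on $(0,\pi)$, and then faithfully translating the geometric condition $\cos\theta\gtrless 0$ into the algebraic threshold through the formula for $1+d^2$. Getting the signs in the inner product right, and matching the boundary case to the correct branch, are the details most likely to trip up a careless write-up.
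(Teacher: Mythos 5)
Your proposal is correct and follows essentially the same route as the paper: both reduce the theorem to deciding whether $\theta=\measuredangle(ya,d,yb)$ is acute or obtuse, arrive at the equivalent condition $1+d^2\gtrless 2t^2$ (you via the sign of the inner product ${\rm Re}\bigl((ya-d)\overline{(yb-d)}\bigr)$, the paper via the law-of-cosines comparison $|ya-d|^2+|yb-d|^2\gtrless|ya-yb|^2$), and then convert it to the stated threshold using $1+d^2=2/(1+\sqrt{1-|u|^2})$ from the proof of Lemma \ref{4steps4}. The two criteria are trivially equivalent, so there is no substantive difference.
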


\begin{proof}
By the choice of the point $d$ in Lemma \ref{4steps4}, $v_{\mathbb{H}^2}(a,b) =\theta.$
By the proof of Lemma \ref{4steps4}, we see that $\theta \in (\pi/2,\pi)$ if and only if
\[  |ya-d|^2 +|yb-d|^2 < |ya-yb|^2 \Leftrightarrow \]
\[ 1+d^2 < 2t^2  \Leftrightarrow \]
\[   1< (1+\sqrt{1-|u|^2}) t^2 \]
and, in this case $\theta \in (\pi/2,\pi)$, we have $\theta= \pi- \arcsin T \,.$ In the case
$\theta \in(0, \pi/2),$ $\theta= \arcsin T.$
\end{proof}

\begin{comment}
\begin{prop} \label{sth1}
For $a,b\in S^1(0,1)\cap \mathbb{H}^2$ we have
$$
    \sin\theta=t
           \Big(\frac12|a+b|+\sqrt{ {\rm Im}(a)\cdot {\rm Im}(b)}\Big)\,, \quad
           t= \frac{|a-b|}{|ab-1|} = {\rm th} \frac{\rho_{ \mathbb{H}^2}(a,b)}{2} \,.
$$
\end{prop}
\medskip

\begin{proof}
Since $ u=\dfrac{1+ab}{a+b} $ and $ t=\dfrac{|a-b|}{|ab-1|} $,
we have
\begin{align*}
   \sqrt{1-t^2} &= \sqrt{\dfrac{(a^2-1)(b^2-1)}{(ab-1)^2}},\\
   \sqrt{1-|u|^2} &= \sqrt{\dfrac{-(a^2-1)(b^2-1)}{(a+b)^2}},\\
   \sqrt{1-u^2t^2} &= \sqrt{\dfrac{4ab(a^2-1)(b^2-1)}{(a+b)^2(ab-1)^2}}.
\end{align*}
So,
\begin{align*}
  \sin\theta &=\dfrac{\sqrt{1-t^2}}{\sqrt{1-u^2t^2}}
                      \Big(1+\sqrt{1-|u|^2}\Big)|t|\\
             &=\bigg(\sqrt{\dfrac{(a+b)^2}{4ab}}
                          +\sqrt{\dfrac{-(a^2-1)(b^2-1)}{4ab}}\bigg)|t|\\
             &=|t|\bigg(\dfrac12\sqrt{(a+b)\big(\frac1a+\frac1b\big)}
                          +\sqrt{-\dfrac12\big(a-\frac1a\big)
                                  \dfrac12\big(b-\frac1b\big)}\bigg)\\
             &=\dfrac{|a-b|}{|ab-1|}\bigg(\frac12|a+b|
                   +\sqrt{ {\rm Im}(a)\cdot {\rm Im}(b)}\bigg).
\end{align*}
\end{proof}
\end{comment}

The next proposition gives an alternative way of writing the right hand side of
Lemma \ref{4steps4}.

\begin{prop} \label{sth2}
For $ a,b\in S^1(0,1)\cap\mathbb{H}^2 $,
$ u=LIS[a,\overline{b},\overline{a},b] $, and
$ t={\rm th}\big(\rho_{\mathbb{H}^2}(a,b)/2\big) $, we have
$$
  \left|\frac{(1+\sqrt{1-|u|^2})t\sqrt{1-t^2}}{\sqrt{1-u^2t^2}}\right|
  =\frac{|a-b|}{|ab-1|}\left(
    \frac12|a+b|+\sqrt{{\rm Im}(a)\cdot {\rm Im}(b)}\right).
$$
\end{prop}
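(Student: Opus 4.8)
The plan is to reduce both sides to positive real numbers and then match them by direct algebraic simplification exploiting the normalization $|a|=|b|=1$ and $a,b\in\mathbb{H}^2$. The key preliminary observation is that the absolute value on the left is essentially cosmetic: by Lemma \ref{uformula}(2) the number $u=(1+ab)/(a+b)$ lies in $(-1,1)$ and is therefore \emph{real}, while $t\in(0,1)$ by \eqref{rhoH}. Consequently the three radicands $1-t^2$, $1-u^2=1-|u|^2$, and $1-u^2t^2=1-|u|^2t^2$ are all positive reals, so the quantity inside the modulus bars is itself a positive real. It then suffices to evaluate the two positive real factors $\sqrt{1-t^2}\big/\sqrt{1-u^2t^2}$ and $\sqrt{1-u^2}$ in closed form in terms of $a$ and $b$.

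First I would record, using $\overline a=1/a$ and $\overline b=1/b$, the three elementary identities
\begin{align*}
1-t^2 &= \frac{(a^2-1)(b^2-1)}{(ab-1)^2},\qquad
1-u^2 = \frac{-(a^2-1)(b^2-1)}{(a+b)^2},\\
1-u^2t^2 &= \frac{4ab\,(a^2-1)(b^2-1)}{(a+b)^2(ab-1)^2}.
\end{align*}
The first comes from $(ab-1)^2-(a-b)^2=(a^2-1)(b^2-1)$, the second from $(a+b)^2-(1+ab)^2=-(a^2-1)(b^2-1)$, and the third from writing the numerator of $1-u^2t^2$ as $P^2-Q^2$ with $P=(a+b)(ab-1)$ and $Q=(1+ab)(a-b)$, for which $P-Q=2a(b^2-1)$ and $P+Q=2b(a^2-1)$, so that the numerator equals $4ab\,(a^2-1)(b^2-1)$.

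Dividing the first identity by the third gives $(1-t^2)/(1-u^2t^2)=(a+b)^2/(4ab)$; since this is a positive real and $|ab|=1$, its square root is $|a+b|/2$. Likewise $1-u^2$ is a positive real whose square root equals $\sqrt{|a^2-1|\,|b^2-1|}\big/|a+b|$, and from $|a^2-1|=|a|\,|a-\overline a|=2\,{\rm Im}(a)$ (and similarly for $b$) this becomes $2\sqrt{{\rm Im}(a)\,{\rm Im}(b)}\big/|a+b|$. Substituting these two evaluations into $\bigl(1+\sqrt{1-u^2}\bigr)\,\sqrt{1-t^2}\big/\sqrt{1-u^2t^2}$ and multiplying by $t=|a-b|/|ab-1|$ collapses the $|a+b|$ factors and yields exactly $\tfrac{|a-b|}{|ab-1|}\bigl(\tfrac12|a+b|+\sqrt{{\rm Im}(a)\,{\rm Im}(b)}\bigr)$, which is the asserted right-hand side.

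The only genuine obstacle is keeping track of branches of the complex square roots appearing in the intermediate expressions, and this is precisely what the reduction to positive-real radicands in the first paragraph is designed to circumvent: because $1-t^2$, $1-u^2$, and $1-u^2t^2$ are positive reals and the final quantities $|a+b|$, ${\rm Im}(a)$, ${\rm Im}(b)$ are manifestly nonnegative, no sign ambiguity can survive. The factorization of the numerator of $1-u^2t^2$ into $4ab\,(a^2-1)(b^2-1)$ is the most computation-heavy step, but it is purely mechanical and is confirmed by the $P^2-Q^2$ splitting indicated above.
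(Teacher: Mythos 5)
Your proof is correct and follows essentially the same route as the paper's: both compute $t=|a-b|/|ab-1|$ and $u=(1+ab)/(a+b)$, establish the same three identities for $1-t^2$, $1-|u|^2$ and $1-u^2t^2$, and combine them so that the $|a+b|$ factors collapse. Your preliminary observation that $u$ is real and all three radicands are positive reals makes the square-root branch bookkeeping slightly cleaner than the paper's formal manipulation of complex radicals, but it is the same computation.
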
 % Masayo-san 2024-01-03 vh24-0103.tex

\bigskip

\noindent
\begin{proof} \quad
Because $\overline{b} =1/b ,$ by \eqref{rhoH} we see that % $ t $ can be written as
$$ t= {\rm th}\frac{\rho_{\mathbb{H}^2}(a,b)}{2}
    =\left|\frac{a-b}{a-\overline{b}}\right|
    =\left|\frac{a-b}{a-\frac1b}\right|
    =\frac{|b||a-b|}{|ab-1|}=\frac{|a-b|}{|ab-1|}.
$$      
Moreover, from Lemma \ref{uformula}(2),
$$
  u=\frac{1+ab}{a+b}.
$$
Hence, we have
\begin{align*}
  \sqrt{1-t^2} &= \sqrt{\frac{(a^2-1)(b^2-1)}{(ab-1)^2}},\\
  \sqrt{1-|u|^2} &= \sqrt{\frac{-(a^2-1)(b^2-1)}{(a+b)^2}},\\
  \sqrt{1-u^2t^2} &= \sqrt{\frac{4ab(a^2-1)(b^2-1)}{(a+b)^2(ab-1)^2}},
\end{align*}
and
\begin{align*}
  &\left|\frac{(1+\sqrt{1-|u|^2})t\sqrt{1-t^2}}{\sqrt{1-u^2t^2}}\right|\\
  &= \sqrt{\frac{(a+b)^2(ab-1)^2}{4ab(a^2-1)(b^2-1)}}
     \left(1+\sqrt{\frac{-(a^2-1)(b^2-1)}{(a+b)^2}}\right)
     \left|\frac{a-b}{ab-1}\right|
     \sqrt{\frac{(a^2-1)(b^2-1)}{(ab-1)^2}}\\
  &= \frac{|a-b|}{|ab-1|}\frac12\sqrt{(a+b)(\frac1a+\frac1b)}
     \left(1+\sqrt{\frac{-(a-\frac1a)(b-\frac1b)}
                        {(a+b)(\frac1a+\frac1b)}}\right)   \\
  &=\frac{|a-b|}{|ab-1|}\left(\frac12|a+b|
        +\sqrt{ {\rm Im}(a)\cdot {\rm Im}(b)}\right).
\end{align*}  
\end{proof}

%\hfill $ \square $

\medskip

\begin{nonsec}{\bf Proof of Theorem \ref{my1}.} The proof follows from Lemma
\ref{4steps4} and Proposition \ref{sth2}. \hfill $\square$
\end{nonsec}

\medskip

\begin{rem} (1) 
For $ a,b\in\mathbb{H}^2$, we have % Masayo-san 2024-01-01???
  $$
      \frac12|a+b|\geq \sqrt{ {\rm Im}(a)\cdot {\rm Im}(b)}\,.\qquad
     $$
Indeed, since $  {\rm Im}(a), {\rm Im}(b)>0 $,
%b)\leq \frac12|a+b|.
\[  \sqrt{ {\rm Im}(a)\cdot {\rm Im}(b)}
\leq\frac12\big( {\rm Im}(a)+ {\rm Im}(b)\big)
=\frac12 {\rm Im}(a+b)\leq \frac12|a+b|.
\]

(2) By (1), we see that, for $  a,b\in S^1(0,1)\cap \mathbb{H}^2$, 
the expression $T$
in Theorem \ref{my4step} and Proposition \ref{sth2} admits the following estimate
\[ 2 t \sqrt{ {\Im}(a) {\Im}(b)} \le T \le t |a+b| .\]

(3) For instance, we have
\[
v_{\mathbb{H}^2}(e^{\varphi i},e^{3\varphi i})={\rm arcsin} \frac{1+ \sqrt{\sqrt{2}-1}}{2}\approx 0.964558\,, \quad \varphi=\pi/8.
\]
\end{rem}

%\begin{comment}

\begin{table}[ht]
    \centering
    \begin{tabular}{|l|l|l|l|}
        \hline
        $a$ & $b$ & $p$ & $v_{\mathbb{H}^2}(a,b)$\\
        \hline
        $2i$ & $-1+i$ & $i$ & $\pi/4$\\
        \hline
        $2i$ & $-3+i$ & $-6+15i+(2-6i)\sqrt{5}$ & $\pi-{\rm arctan}(9+4\sqrt{5})$\\
        \hline
        $2i$ & $1.2+4i$ & $-2.09848+2.10091i$ & $0.588469$\\
        \hline
        $2i$ & $1+3i$ & $-2+5i+(2-2i)\sqrt{3}$ & ${\rm arctan}((5+4\sqrt{3})/23)$\\
        \hline
        $2i$ & $-2+i$ & $-4+15i/2+(1-2i)\sqrt{10}$ & ${\rm arctan}(3/2+\sqrt{5/2})$\\
        \hline
        $2i$ & $-4+i$ & $-8+51i/2+(1-4i)\sqrt{34}$ & $\pi-{\rm arctan}((6+\sqrt{34})/4)$\\
        \hline
        $2i$ & $-3+3i$ & $6+25i-(2+6i)\sqrt{15}$ & ${\rm arctan}(1+4/\sqrt{15})$\\
        \hline
    \end{tabular}
    \caption{Accurate values of the point $p$ defined as in Table \ref{t1} and $v_{\mathbb{H}^2}(a,b)$ for example point pairs $a,b$.}
    \label{t3}
\end{table}

\bigskip
\begin{prop} % vh24-0105-pointSV.tex
For  $ a,b\in S^1(0,1)\cap\mathbb{H}^2 $, let
 the point $d$ be as in Lemma \ref{4steps4}, and
let $ \theta=\measuredangle (a,d,b) $.
Then 
$$
\left.\sin\theta=\left|{\rm Im}\Big(\frac{a-d}{b-d}\Big)\right|\middle/\Big|\frac{a-d}{b-d}\Big|.\right.
$$
\end{prop}
\bigskip

%\noindent{\it Proof}.
\begin{proof} Note that $ 0<\theta<\pi $ and $ \sin\theta>0 $, as 
$ a,b\in\mathbb{H}^2 $. Since 
$$ 
   \theta=\arg\frac{a-d}{b-d},
$$
we can write $ (a-d)/(b-d)$ as
$$
   \frac{a-d}{b-d}=\left|\frac{a-d}{b-d}\right|(\cos\theta+i\sin\theta)
$$ 
and the equality is obtained.
\end{proof}
%\hfill $ \square $

%\end{comment}

\bigskip

%\hfill $\square$

%\includepdf[pages=-,pagecommand={},scale=0.85]{sinThetatsts20231217.pdf}

% Four steps:
%\includepdf[pages=-,pagecommand={},scale=0.85]{vHformula20231119.pdf}
%\includepdf[pages=10-10,pagecommand={},scale=0.85]{vH20231119b.pdf}
%===========================================================================================
%\includegraphics[scale=0.5,angle=90]{vHformulaCor20231119.jpg}
 %===========================================================================================

%\includepdf[pages=-,pagecommand={},scale=0.85]{simplifvH20231120.pdf}
%===========================================================================================
%===========================================================================================

%\includepdf[pages=-,pagecommand={},scale=0.85]{triv20231026.pdf}
%===========================================================================================

%\section{Miniobservation}
%
%\begin{figure}[H]
%    \centering
%    \includegraphics[width=10cm]{orthTst20240129.pdf}
%    \caption{
%Here $a,b \in S^1(0,1) \cap \mathbb{H}^2$ and the other circle is orthogonal
%to $S^1(0,1)$. The red lines seem to intersect at $m,$ the hypmidpoint of $a$ and $b.$
%   }\label{fig6}
%  \end{figure}

%%%%%%%%%%%%%%%%%%%%%%%%%%%
%%%%%%%%%%%%%%%%%%%%%%%%%%%
%%%%%%%%%%%%%%%%%%%%%%%%%%%
%===========================================================================================

\section{H\"older continuity and $v_{\mathbb{H}^2}$}

In this final section, we study how the visual angle metric $v_{\mathbb{H}^2}$
is distorted under quasiregular mappings. Our result is similar to a result for
$v_{\mathbb{B}^2}$ in \cite{wv}.

% THIS IS THE OLD VERSION OF SECTION 6 FIRST FEW LINES:
%Observe first the following elementary relation for $u>0$
%\begin{equation} \label{thsh}
%{\rm arctan}({\rm sh}u)={\rm arcsin}({\rm th} u) \,.
%\end{equation}
%This identity enables us to write \cite[Theorem 3.19]{klvw} in the form
%\begin{equation} \label{klvw319}
%{\rm arctan}\left({\rm sh}\frac{\rho_{\mathbb{H}^2}(a,b)}{2} \right)
%\le v_{\mathbb{H}^2}(a,b) \le 2\, {\rm arctan}\left({\rm sh}\frac{\rho_{\mathbb{H}^2}(a,b)}{2} \right).
%\end{equation}

Let us recapitulate some fundamental facts
about $K$-quasiregular mappings. The definition of these mappings  is given in \cite[pp. 288-289]{hkv}. 
For the proof of the following result, we need some
properties of an increasing homeomorphism $\varphi_{K,2}:[0,1]\to[0,1]$, \cite[(9.13), p. 167]{hkv} 
\begin{align*}
\varphi_{K,2}(r)=\frac{1}{\gamma_2^{-1}(K\gamma_2(1\slash r))} =\mu^{-1}(\mu(r)/K),\quad
0<r<1,\,K>0.
\end{align*}
This function is the special function of the Schwarz
lemma in \cite[Thm 16.2]{hkv}, crucial for the sequel.
The Gr\"otzsch capacity, a decreasing homeomorphism $\gamma_2:(1,\infty)\to (0,\infty)$, is defined as \cite[(7.18), p. 122]{hkv}
\begin{align*}
\gamma_2(1/r)=\frac{2\pi}{\mu(r)},\quad \mu(r)=\frac{\pi}{2}\frac{\K(\sqrt{1-r^2})}{\K(r)},\quad
\K(r)=\int^1_0 \frac{dx}{\sqrt{(1-x^2)(1-r^2x^2)}}
\end{align*}
with $0<r<1$. Another special function we need is
\begin{equation} \label{myeta}
\displaystyle
\eta_{K,2}(t) = \frac{1-\varphi_{1/K,2}(1/\sqrt{1+t})^2}{\varphi_{1/K,2}(1/\sqrt{1+t})^2} = \left( \frac{\varphi_{K,2}(\sqrt{t/(1+t)})}{\varphi_{1/K,2}(1/\sqrt{1+t})}\right)^2\,, \quad t > 0\,.
\end{equation}

The following constant will be important \cite[(10.4), p. 203]{avv}
\begin{align*}
\lambda(K)=\left(\frac{\varphi_{K,2}(1/\sqrt{2})}{\varphi_{1/K,2}(1/\sqrt{2})}\right)^2
=\eta_{K,2}(1)\,,\quad K>1.   
\end{align*}
Trivially, $\lambda(1)=1$ and, 
 by \cite[Thm 10.35, p. 219]{avv},
$\lambda(K)<e^{\pi(K-1/K)}$ for $K>1$. The function
$\varphi_{K,2}$ satisfies many identities and inequalities
as shown in \cite[Section 10]{avv}. 
%We will need the
%following inequality, which can be extracted from
%the proof of \cite[Thm 6.8]{rv}. The proof is based on
%\cite[(10.3), 10.24]{avv}.

\begin{lem} \label{rv68} For $K\ge 1, r \in (0,1),$
we have
 \begin{equation} \label{phibd}
\frac{\varphi_{K,2}(r)}{\sqrt{1- \varphi_{K,2}(r)^2}} 
= \sqrt{\eta_{K,2}\left(\frac{r^2}{1-r^2}\right)}
\le \lambda(K)^{1/2}
\max \left\{ ( \frac{r}{\sqrt{1-r^2}} )^{1/K},
 ( \frac{r}{\sqrt{1-r^2}} )^K \right\} \,.
\end{equation} 
In particular, if $r= {\rm th }(t/2), t>0,$ we have
 \begin{equation} \label{etabd}
 \frac{\varphi_{K,2}({\rm th }(t/2))}{\sqrt{1- \varphi_{K,2}({\rm th }(t/2))^2}} \le 
 \lambda(K)^{1/2}
\max \left\{ ( {\rm sh}(t/2)  )^{1/K},
 ({\rm sh}(t/2)  )^K \right\} \,.
 \end{equation}
\end{lem}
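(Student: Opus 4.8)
The plan is to prove the identity in \eqref{phibd} first, then the inequality, and finally deduce \eqref{etabd} by a substitution. For the identity, write $t=r^2/(1-r^2)$, so that $\sqrt{t/(1+t)}=r$ and $1/\sqrt{1+t}=\sqrt{1-r^2}$. The crucial input is the complementary relation
\begin{equation*}
\varphi_{K,2}(r)^2+\varphi_{1/K,2}\bigl(\sqrt{1-r^2}\bigr)^2=1,
\end{equation*}
which follows from $\mu(\varphi_{K,2}(r))=\mu(r)/K$ together with the reflection formula $\mu(r)\,\mu(\sqrt{1-r^2})=\pi^2/4$ and the injectivity of $\mu$: indeed, if $s=\varphi_{K,2}(r)$ and $w=\varphi_{1/K,2}(\sqrt{1-r^2})$, then $\mu(s)\mu(w)=\mu(r)\mu(\sqrt{1-r^2})=\mu(s)\mu(\sqrt{1-s^2})$, forcing $w=\sqrt{1-s^2}$. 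Substituting this into the first expression for $\eta_{K,2}$ in \eqref{myeta} turns its numerator $1-\varphi_{1/K,2}(\sqrt{1-r^2})^2$ into $\varphi_{K,2}(r)^2$ and its denominator into $1-\varphi_{K,2}(r)^2$, so $\eta_{K,2}(t)=\varphi_{K,2}(r)^2/(1-\varphi_{K,2}(r)^2)$, and taking square roots gives the stated equality.

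For the inequality I would reduce everything to the claim that, for $K\ge1$, the map $t\mapsto\eta_{K,2}(t)/t^{K}$ is decreasing and $t\mapsto\eta_{K,2}(t)/t^{1/K}$ is increasing on $(0,\infty)$. Granting these, and recalling $\lambda(K)=\eta_{K,2}(1)$, evaluation at $t=1$ gives $\eta_{K,2}(t)\le\lambda(K)t^{K}$ for $t\ge1$ and $\eta_{K,2}(t)\le\lambda(K)t^{1/K}$ for $t\le1$, that is $\eta_{K,2}(t)\le\lambda(K)\max\{t^{K},t^{1/K}\}$ for all $t>0$. Applying this with $t=r^2/(1-r^2)$, noting that $r/\sqrt{1-r^2}=\sqrt{t}$, and taking square roots yields \eqref{phibd}; the case $K=1$ is trivial since $\lambda(1)=1$ and $\varphi_{1,2}(r)=r$. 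Finally, \eqref{etabd} follows by putting $r=\th(t/2)$, because then $r/\sqrt{1-r^2}=\th(t/2)/\sech(t/2)=\sh(t/2)$.

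Both monotonicity statements are equivalent to the two-sided logarithmic-derivative bound $1/K\le t\,\eta_{K,2}'(t)/\eta_{K,2}(t)\le K$. To establish it I would differentiate $\eta_{K,2}(t)=\varphi_{K,2}(r)^2/(1-\varphi_{K,2}(r)^2)$ with $r=\sqrt{t/(1+t)}$; using $dr/dt=(1-r^2)^2/(2r)$ one obtains
\begin{equation*}
\frac{t\,\eta_{K,2}'(t)}{\eta_{K,2}(t)}=\frac{r(1-r^2)\,\varphi_{K,2}'(r)}{\varphi_{K,2}(r)\bigl(1-\varphi_{K,2}(r)^2\bigr)}.
\end{equation*}
Differentiating $\mu(\varphi_{K,2}(r))=\mu(r)/K$ and inserting the elliptic-integral derivative $\mu'(r)=-\pi^2/\bigl(4r(1-r^2)\K(r)^2\bigr)$ simplifies the right-hand side to $\tfrac1K\bigl(\K(\varphi_{K,2}(r))/\K(r)\bigr)^2$. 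Since $K\ge1$ forces $\varphi_{K,2}(r)\ge r$ and $\K$ is increasing, this is $\ge1/K$; for the upper bound I would rewrite the defining relation as $\K(\varphi_{K,2}(r))/\K(r)=K\,\K(\sqrt{1-\varphi_{K,2}(r)^2})/\K(\sqrt{1-r^2})$ and use $\sqrt{1-\varphi_{K,2}(r)^2}\le\sqrt{1-r^2}$ to bound the last ratio by $1$, giving $\K(\varphi_{K,2}(r))/\K(r)\le K$ and hence $\tfrac1K(\cdots)^2\le K$. The step I expect to be the main obstacle is precisely this derivative computation yielding the clean expression $\tfrac1K(\K(\varphi_{K,2}(r))/\K(r))^2$, since it requires correctly combining $dr/dt$, the implicit differentiation of $\mu(\varphi_{K,2}(r))=\mu(r)/K$, and the derivative of $\mu$, and any sign or constant error there would destroy the two-sided bound on which both monotonicity statements rest.
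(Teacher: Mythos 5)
Your proof is correct, but it takes a more self-contained route than the paper for the central inequality. The paper's own proof is essentially a citation: it invokes \cite[10.24]{avv} to get $\sqrt{\eta_{K,2}(r^2/(1-r^2))}\le\lambda(K)^{1/2}\max\{(r/\sqrt{1-r^2})^{1/K},(r/\sqrt{1-r^2})^K\}$ directly, and then performs the same substitution $r=\th(t/2)$, $r/\sqrt{1-r^2}=\sh(t/2)$ that you do; the identity in \eqref{phibd} is left implicit (it is exactly your complementary relation $\varphi_{K,2}(r)^2+\varphi_{1/K,2}(\sqrt{1-r^2})^2=1$ applied to the definition \eqref{myeta}). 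What you do differently is reprove the cited result from scratch: you establish the two-sided bound $1/K\le t\,\eta_{K,2}'(t)/\eta_{K,2}(t)\le K$ via the clean formula $t\,\eta_{K,2}'(t)/\eta_{K,2}(t)=\tfrac1K\bigl(\K(\varphi_{K,2}(r))/\K(r)\bigr)^2$, deduce that $\eta_{K,2}(t)/t^{K}$ decreases and $\eta_{K,2}(t)/t^{1/K}$ increases, and conclude by evaluating at $t=1$ where $\eta_{K,2}(1)=\lambda(K)$. I checked the computation: $dr/dt=(1-r^2)^2/(2r)$, the logarithmic derivative of $s^2/(1-s^2)$ is $2/(s(1-s^2))$, and $\mu'(r)=-\pi^2/(4r(1-r^2)\K(r)^2)$ combine exactly as you claim, and the bounds $1/K\le(\K(\varphi_{K,2}(r))/\K(r))^2/K\le K$ follow from $\varphi_{K,2}(r)\ge r$ and the rewriting of $\mu(\varphi_{K,2}(r))=\mu(r)/K$ in terms of complementary moduli. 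What your approach buys is independence from the monotonicity theorem in \cite{avv} (which is essentially what you reprove), at the cost of needing the derivative formula for $\mu$; the paper's approach is shorter but opaque without the reference. Your treatment of the identity and of \eqref{etabd} matches what the paper intends.
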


\begin{proof}
%Clearly $\varphi_{A,2}(\varphi_{B,2}(r))= \varphi_{AB,2}(r)$ for
% $A,B>0, r \in (0,1)$ and by \cite[p.92 (5.36), p. 51 (3.17)]{avv}
%\begin{equation}
%\varphi_{1/2,2}(r) = \left( \frac{r}{1+ \sqrt{1-r^2}} \right)^2
%\end{equation}
%and hence
By  \cite[ 10.24]{avv}  it follows that for $K>1, r \in (0,1),$
\[
\sqrt{\eta_{K,2}(\frac{r^2}{1- r^2})} \le  \lambda(K)^{1/2}
\max \left\{ ( \frac{r}{\sqrt{1-r^2}} )^{1/K},
 ( \frac{r}{\sqrt{1-r^2}} )^K \right\} \,,
\]
which proves \eqref{phibd}. Setting $r= {\rm th }(t/2), t>0,$ in \eqref{phibd} we have $r^2/(1-r^2)= {\rm sh}^2 (t/2)$ and
 \eqref{etabd} follows immediately. 
\end{proof}

\begin{nonsec}{\bf Proof of Theorem \ref{thm_holder}.}
By \eqref{klvw319}  for $a,b\in\mathbb{H}^2$,
\[
\displaystyle
\tan\left(\frac{v_{\mathbb{H}^2}(f(a),f(b))}{2}\right)
\le {\rm sh} \frac{\rho_{\mathbb{H}^2}(f(a),f(b))}{2}
= \frac{{\rm th}  \frac{\rho_{\mathbb{H}^2}(f(a),f(b))}{2}}{\sqrt{1- {\rm th}^2  \frac{\rho_{\mathbb{H}^2}(f(a),f(b))}{2} }}
\]
and further, by  \eqref{etabd} and the Schwarz lemma \cite[Thm 16.2]{hkv},
\begin{align*}
\displaystyle
 \frac{{\rm th}  \frac{\rho_{\mathbb{H}^2}(f(a),f(b))}{2}}{\sqrt{1- {\rm th}^2  \frac{\rho_{\mathbb{H}^2}(f(a),f(b))}{2} }} \le
 \frac{\varphi_{K,2}( {\rm th}\frac{\rho_{\mathbb{H}^2}(a,b)}{2})}
 {\sqrt{1- \varphi_{K,2}({\rm th} \frac{\rho_{\mathbb{H}^2}(a,b)}{2})^2}}
\le \\ \lambda(K)^{1/2}
\max\left\{\left({\rm sh}\frac{\rho_{\mathbb{H}^2}(x,y)}{2}\right)^K,\left({\rm sh}\frac{\rho_{\mathbb{H}^2}(x,y)}{2}\right)^{1/K}\right\}\,.
\end{align*}
Finally, the above inequalities together with \eqref{klvw319} yield the desired conclusion

\begin{align*}
\tan\left(\frac{v_{\mathbb{H}^2}(f(a),f(b))}{2}\right)\leq
\lambda(K)^{1/2}
\max\left\{\left(\tan(v_{\mathbb{H}^2}(a,b))\right)^K,\left(\tan(v_{\mathbb{H}^2}(a,b))\right)^{1/K}\right\}\,.
\end{align*}    
\hfill $\square$
\end{nonsec}
%\end{proof}

\begin{rem}
For $K=1$, Theorem \ref{thm_holder} yields for  $v_{\mathbb{B}^2}(a,b) < \pi/2$
\begin{align*}
\tan\left(\frac{v_{\mathbb{H}^2}(f(a),f(b))}{2}\right)\leq\tan(v_{\mathbb{H}^2}(a,b)),   
\end{align*} 
which is sharp for M\"obius transformations by \ref{2.19}.

\end{rem}

\end{document}